\newtheorem{thm}{Theorem}
\newtheorem{defn}[thm]{Definition}
\newtheorem{prop}[thm]{Proposition}
\newtheorem{lem}[thm]{Lemma}
\newtheorem{rem}[thm]{Remark}
\newtheorem{cor}[thm]{Corollary}
\newtheorem{ex}{Example}
\newcommand{\mcl}[1]{\mathcal{#1}}
\DeclareMathOperator{\Tr}{Trace}
\newcommand{\R}{\mathbb{R}}
\newcommand{\N}{\mathbb{N}}
\newcommand{\eps}{\varepsilon}
\title{\LARGE \bf
A Converse Sum of Squares Lyapunov Function for Outer Approximation of Minimal Attractor Sets of Nonlinear Systems
}
\author{Morgan Jones%
	\thanks{M. Jones is with the Department of Automatic Control and Systems Engineering,
	The University of Sheffield, Amy Johnson Building, Mappin Street, Sheffield, S1 3JD. e-mail: {\tt \small morgan.jones@sheffield.ac.uk} },
	Matthew M. Peet
	\thanks{M. Peet is with the School for the Engineering of Matter, Transport and Energy, Arizona State University, Tempe, AZ, 85298 USA. e-mail: {\tt \small mpeet@asu.edu } }
}
\begin{document}

\maketitle
\thispagestyle{plain}
\pagestyle{plain}


%

%

\begin{abstract}
	Many dynamical systems described by nonlinear ODEs are unstable. Their associated solutions do not converge towards an equilibrium point, but rather converge towards some invariant subset of the state space called an attractor set. For a given ODE, in general, the existence, shape and structure of the attractor sets of the ODE are unknown. Fortunately, the sublevel sets of Lyapunov functions can provide bounds on the attractor sets of ODEs. In this paper we propose a new Lyapunov characterization of attractor sets that is well suited to the problem of finding the minimal attractor set. We show our Lyapunov characterization is non-conservative even when restricted to Sum-of-Squares (SOS) Lyapunov functions. Given these results, we propose a SOS programming problem based on determinant maximization that yields an SOS Lyapunov function whose $1$-sublevel set has minimal volume, is an attractor set itself, and provides an optimal outer approximation of the minimal attractor set of the ODE. Several numerical examples are presented including the Lorenz attractor and Van-der-Pol oscillator.

\end{abstract}

\vspace{-0.4cm}
\section{Introduction} \label{sec: intro}
In this paper we consider nonlinear Ordinary Differential Equations (ODEs) of the form
\begin{equation} \label{ode}
\dot{x}(t)=f(x(t)), \quad x(0)=x_0.
\end{equation}
where $f: \R^n \to \R^n$ is the vector field and $x_0 \in \R^n$ is the initial condition. We denote the solution map (which exists and is continuous on $x \in X\subset \R^n$ when $f$ is Lipschitz continuous and $X$ is compact and invariant under $f$) of the ODE~\eqref{ode} by $\phi_f: X \times [0, \infty) \to \R^n$ where
\vspace{-0.2cm}
\begin{align*}
\frac{d}{dt} \phi_f(x,t) & = f(\phi_f(x,t)) \text{ for all } x \in X \text{ and } t \ge 0, \\
\phi_f(x,0) & =x \text{ for all } x \in X.
\end{align*}

 An ODE is asymptotically stable about some equilibrium point, {$x^*$}, if there exists some neighbourhood of the equilibrium, $\mathcal{N}({x^*})$, such that $\lim_{t \to \infty} \phi_f(x,t)={x^*}$ for any $x \in \mathcal{N}({x^*})$. Attractor sets generalize the notion of asymptotic stability, but where solutions tend towards a compact invariant subset of $\R^n$ (rather than being restricted to tend towards a single equilibrium point). Specifically, a compact set $A \subset \R^n$ is said to be an attractor set of the ODE~\eqref{ode} if for all $x \in A$ there exists $\eps>0$ such that $\lim_{t \to \infty} \inf_{y \in A}||y - \phi_f(z,t)||_2 =0$ for all $z \in \{y \in \R^n: ||x-y||_2<\eps\}$, and $x \in A$ implies $\phi_f(x,t) \in A$ for all $t \ge 0$. An attractor set is said to be minimal if there does not exists any other attractor sets contained within it.

Attractor sets provide information about the long term behavior of dynamical systems. The computation of attractor is used for design of secure private communications~\cite{cuomo1993synchronization,zhao2018observer}, the computation of Unstable Periodic Orbits (UPOs)~\cite{lakshmi2020finding}, and risk quantification of financial systems~\cite{gao2018ultimate}. Furthermore, identification of minimal attractor sets can be used to bound the domain of strange attractors and ``non-determinism'' in chaos theory~\cite{Lee_2016}. 

  It is well known that the sublevel sets of Lyapunov functions yield attractor sets~\cite{lin1996smooth}. A Lyapunov function of an ODE is any function that is positive and decreases along the solution map of the ODE. In~\cite{Li_2004,Yu_2005} quadratic Lyapunov functions were used to estimate bounds for Lorenz attractor. In~\cite{goluskin2020bounding} attractor sets are indirectly approximated by searching for  Sum-of-Squares (SOS) Lyapunov functions that provide bounds for $ \sup_{(x,t) \in \Omega \times [0, \infty) }\Phi(\phi_f(x,t))$, where $\Omega \subset \R^n$, $\Phi: \R^n \to \R$, and $\phi_f$ is the solution map to some ODE~\eqref{ode}. In~\cite{jones2018using} attractor sets approximated by using SOS to search for Lyapunov functions outside some handpicked set $D \subset \R^n$ that is known to contain the attractor set. In~\cite{schlosser2020converging,wang2012polynomial} an alternative SOS based method was proposed for attractor set approximation. Impressively, the method proposed in~\cite{schlosser2020converging} was shown to provide an arbitrarily close approximation of an attractor set with respect to the Lebesgue measure. However, the methods in~\cite{schlosser2020converging,wang2012polynomial} do not yield Lyapunov functions and hence any approximation found cannot be shown to also be an attractor set.

 The problem of computing attractor sets is related to the problem of certifying the asymptotic stability of equilibrium points of an ODE~\eqref{ode}; since certifying $A^*=\{0\}$ is an attractor set of an ODE~\eqref{ode} is equivalent to showing the asymptotic stability of the ODE~\eqref{ode} about $0 \in \R^n$. The use of SOS Lyapunov functions to certify the asymptotic stability of equilibrium points of an ODE~\eqref{ode} has been well treated in the literature~\cite{zheng2018computing,anderson2015advances,cunis2020sum,valmorbida2017region,awrejcewicz2021estimating,jones2020Arbitrarily,20.500.11850/461553}. 

SOS programming provides a computationally tractable method for searching for SOS Lyapunov functions and hence computing attractor sets of ODEs. However, it is currently unknown how conservative it is to restrict the search {of} Lyapunov functions to SOS polynomials. The goal of this paper is then to: 1) Propose a Lyapunov characterization of attractor sets that is well suited to the problem of approximating the minimal attractor. 2) Show that for a given ODE with, sufficiently smooth vector field, there exists a sequence of SOS Lyapunov functions that yield optimal outer set approximations of attractor sets of the ODE. Note that, an optimal outer set approximation of a set $A^* \subset \R^n$ is any set $A \subset \R^n$ such that $A^* \subseteq A$ and $D(A^*,A)$ is minimal, where $D$ is some set metric.

 Specifically, given an ODE~\eqref{ode}, we propose a new Lyapunov characterization of attractor sets. We show that if {$V: \R^n \to \R$} satisfies,
\vspace{-0.2cm} \begin{align} \label{introeqn: LF ineq}
\nabla V(x)^T f(x)  \le -(V(x)-1) & \text{ for all } x \in \Omega,\\ \label{introeqn: LF ineq 2}
\{x \in \Omega: V(x) \le 1  \} & \subseteq \Omega^\circ,\\ \label{introeqn: LF ineq 3}
\{x \in \Omega: V(x) \le 1  \} & \ne \emptyset,
\end{align}
{where $\Omega \subset \R^n$ is some compact set and $\Omega^\circ$ is the interior of $\Omega$, then the $1$-sublevel set of $V$ is an attractor set of the ODE~\eqref{ode}. To approximate the minimal attractor set of an ODE we then propose a sequence of $d$-degree optimization problem, each solved by a $d$-degree Sum-of-Square (SOS) polynomial function that satisfies Eqs.~\eqref{introeqn: LF ineq}, \eqref{introeqn: LF ineq 2} and~\eqref{introeqn: LF ineq 3}, and has minimal $1$-sublevel set. We show in Corollary~\ref{cor: convergence of SOS to attrcator} that the sequence of $d$-degree solutions to the optimization problem yield a sequence of $1$-sublevel sets that each contain the minimal attractor of the ODE~\eqref{ode}, are themselves attractor sets, and converge to the minimal attractor of the ODE~\eqref{ode} with respect to the volume metric. }

Our proposed optimization problem for optimal outer set approximations of minimal attractors is solved by finding the SOS polynomial Lyapunov function with minimal $1$-sublevel set volume. Unfortunately, there is no known closed expression for the volume of a sublevel set of a polynomial~\cite{lasserre2019volume}; making our optimization problem hard to solve. For SOS polynomials, $V= z_d(x)^T P z_d(x)$ where $P>0$, rather than minimizing the sublevel set volume of $V$ directly there exist several heuristics based on maximizing the eigenvalues of $P$. For instance in~\cite{Dabbene_2017} an optimization problem was proposed with $\Tr(P)$ objective function. Alternatively, $\log \det(P)$ functions have also been used as a metric for volume of $\{x \in \R^n: z_d(x)^T P z_d(x) \le 1  \}$, first being proposed in~\cite{Magnani_2005} and subsequently being used in the works of~\cite{Ahmadi_2017, jones2018using, jones2019using}. In this paper we also take a similar determinant maximizing approach and maximize $(\det(P))^{\frac{1}{n}}$ which is equivalent to maximizing $\log \det(P)$ but can be implemented on a larger array of SDP solvers~\cite{lofberg2004yalmip}.

 In order to establish the convergence of our proposed method for optimal outer approximations of minimal attractor sets we propose a new converse Lyapunov theorem. Specifically, given an attractor set we show that there exists a sequence of SOS Lyapunov functions each satisfying Eqs.~\eqref{introeqn: LF ineq}, \eqref{introeqn: LF ineq 2}, and~\eqref{introeqn: LF ineq 3}, and whose $1$-sublevel sets converge to the attractor set with respect to the volume metric. 
 
 Other important converse Lyapunov results concerning smooth Lyapunov functions include~\cite{lin1996smooth,teel2000smooth}; where it is shown that asymptotically stable nonlinear systems with sufficiently smooth vector fields admit smooth (but not necessarily SOS) Lyapunov functions that can certify the stability of the systems. In terms of SOS converse Lyapunov theory we mention~\cite{peet2010converse} that showed that if the system's solutions converge locally to an equilibrium point at an exponential rate then there always exists a SOS Lyapunov function that can certify this local exponential stability. However, for asymptotically stable systems whose solutions converge to an equilibrium point at a sub-exponential rate there may not exist SOS Lyapunov functions that can certify this stability, as shown by the counterexample presented in~\cite{ahmadi2018globally}.

Before proceeding, we note that there is no contradiction with the counterexample found in~\cite{ahmadi2018globally} and our proposed converse Lyapunov theorem (stated in Theorem~\ref{thm: existence of sos LF}). Although SOS Lyapunov functions cannot be used to certify the stability of equilibrium points in general (as proven by the counterexample from~\cite{ahmadi2018globally}), Theorem~\ref{thm: existence of sos LF} shows that SOS Lyapunov functions can be used to certify that arbitrarily small neighborhoods of equilibrium points are attractor sets. Hence SOS Lyapunov functions can certify the ``stability" of arbitrarily small neighborhoods of equilibrium points.



The rest of the paper is organized as follows. Notation is introduced in Section~\ref{sec: notation}. Attractor sets are defined in terms of solution maps of ODEs in Section~\ref{sec: Attractor sets are defines using solution maps}. A Lyapunov type theorem is proposed in Section~\ref{sec: LF} that provides sufficient conditions for a set to be an attractor set. In Section~\ref{sec:converse LF},  given an ODE, it is shown that there exists a sequence of SOS Lyapunov functions that yield a sequence of sublevel sets that converge to the minimal attractor set of the ODE. An SOS based algorithm for minimal attractor set approximation is then proposed in Section~\ref{sec: SOS problems for attractor set approx} and numerical examples are shown in Section~\ref{sec: numerical examples}. Finally our conclusion is given in Section~\ref{sec: conclusion}.



\vspace{-0.3cm}
\section{Notation} \label{sec: notation}
\vspace{-0.25cm}
\subsection{Set Metric Notation} \label{subsec: set metric notation}
For $A \subset \R^n$ we denote the indicator function by \\ {$\mathds{1}_A : \R^n \to \R$,} where $\mathds{1}_A(x) = \begin{cases}
& 1 \text{ if } x \in A\\
& 0 \text{ otherwise.}
\end{cases}$ For $B \subseteq \R^n$,  $\mu(B):=\int_{\R^n} \mathds{1}_B (x) dx$ is the Lebesgue measure of $B$. For sets $A,B \subset \R^n$, we denote the volume metric as $D_V(A,B)$, where $D_V(A,B):=\mu( (A/B) \cup (B/A) )$. We note that $D_V$ is a metric (Defn.~\ref{def:metric}), as shown in Lem.~\ref{lem: Dv is metric} (found in Appendix~\ref{sec: appendix 2}). For notation on the distance between a point and a set please see Section~\ref{subsec: set notation}.
\vspace{-0.2cm} \subsection{Euclidean Space Notation} \label{subsec: set notation}
We denote the power set of $\R^n$, the set of all subsets of $\R^n$, as $P(\R^n)=\{X:X\subset \R^n \}$. For two sets $A,B \in \R^n$ we denote $A/B= \{x \in A: x \notin B\}$. We denote the distance between a point $x \in \R^n$ and a set $A \subset \R^n$ by $D(x,A):=\inf_{y \in A}\{||x-y||_2 \}$. For $x \in \R^n$ we denote $||x||_p= \left( \sum_{i =1}^n x_i^p \right)^{\frac{1}{p}}$. For $\eta>0$ and $y \in \R^n$ we denote the set $B(y,\eta)= \{x\in \R^n : ||x-y||_2< \eta\}$. For $\eta>0$ and a set $A \subset \R^n$ we denote $B(A, \eta)=\cup_{x \in A} B(x, \eta)$. Let us denote bounded subsets of $\R^n$ by $\mcl B:=\{B \subset \R^n: \mu(B)<\infty\}$.  If $M$ is a subspace of a vector space $X$ we denote equivalence relation $\sim_M$ for $x,y \in X$ by $x \sim_M y$ if $x-y \in M$. We denote quotient space by $X \pmod M:=\{ \{y \in X: y \sim_M x \}: x \in X\}$. For a set $X \subset \R^n$ we say $x \in X$ is an interior point of $X$ if there exists $\eps>0$ such that $\{y \in \R^n: ||x-y||< \eps\}\subset X$. We denote the set of all interior points of $X$ by $X^\circ$. The point $x \in X$ is a limit point of $X$ if for all $ \eps>0$ there exists $ z \in \{y \in \R^n / \{x\}: ||x-y||<\eps\}$ such that $z \in X$; we denote the set of all limit points of $X$, called the closure of $X$, as $(X)^{cl}$. We say a set $X \subset \R^n$ is closed if $X=(X)^{cl}$. We say a set $X \subset \R^n$ is compact if it is closed and bounded. We denote the set of $n \times n$ {symmetric} matrices with strictly positive eigenvalues as $S_n^{++}$.
\vspace{-0.2cm} \subsection{Function Continuity and Smoothness Notation} \label{subsec: function continuity and smoothness notation}
 Let $C(\Omega, \Theta)$ be the set of continuous functions with domain $\Omega \subset \R^n$ and image $\Theta \subset \R^m$. We denote the set of locally and uniformly Lipschitz continuous functions on $\Theta_1 \text{ and }\Theta_2$ by $LocLip(\Theta_1,\Theta_2)$ and $Lip(\Theta_1,\Theta_2)$ respectively. For $\alpha \in \N^n$ we denote the partial derivative $D^\alpha f(x):= \Pi_{i=1}^{n} \frac{\partial^{\alpha_i} f}{\partial x_i^{\alpha_i}} (x)$ where by convention if $\alpha=[0,..,0]^T$ we denote $D^\alpha f(x):=f(x)$. We denote the set of $i$'th continuously differentiable functions by $C^i(\Omega,\Theta):=\{f \in C(\Omega,\Theta): D^\alpha f \in C(\Omega, \Theta) \text{ } \text{ for all } \alpha \in \N^n \text{ such that } \sum_{j=1}^{n} \alpha_j \le i\}$. For $V \in C^1(\R^n \times \R, \R)$ we denote $\nabla V:= (\frac{\partial V}{\partial x_1},....,\frac{\partial V}{\partial x_n})^T$. {We say $f: \Omega \to \R$ is such that $f \in L^1(\Omega,\R)$ if $||f||_{L^1(\Omega,\R)}:=\int_\Omega |f(x)| dx < \infty$.}.

\subsection{Polynomial Notation}
We denote the space of polynomials $p: \Omega \to \Theta$ by $\mcl P(\Omega,\Theta)$ and polynomials with degree at most $d \in \N$ by $\mcl{P}_d(\Omega,\Theta)$. We say $p \in \mcl{P}_{2d}(\R^n,\R)$ is Sum-of-Squares (SOS) if for $k \in \{1,...k\}\subset \N$ there exists $p_i \in \mcl{P}_{d}(\R^n,\R)$ such that $p(x) = \sum_{i=1}^{k} (p_i(x))^2$. We denote $\sum_{SOS}^d$ to be the set of SOS polynomials of at most degree $d \in \N$ and the set of all SOS polynomials as $\sum_{SOS}$. We denote $Z_d: \R^n \times \R \to \R^{\mcl N_d}$ as the vector of monomials of degree $d \in \N$ or less, where $\mcl N_d:= {d+n \choose d}$.


\section{Attractor Sets are Defined Using Solution Maps of Nonlinear ODEs} \label{sec: Attractor sets are defines using solution maps}
Consider a nonlinear Ordinary Differential Equation (ODE) of the form
\begin{equation} \label{eqn: ODE}
	\dot{x}(t) = f(x(t)), \quad x(0)=x_0\in \R^n, \quad t \in [0,\infty),
\end{equation}
where $f: \R^n \to \R^n$ is the vector field and $x_0 \in \R^n$ is the initial condition.

Given $X \subset \R^n$, $I \subset [0, \infty)$, and an ODE~\eqref{eqn: ODE} we say any function $\phi_f :X \times I \to \R^n$ satisfying
\begin{align} \label{soln map property}
	&\frac{\partial \phi_f(x,t)}{\partial t}= f(\phi_f(x,t)) \text{ for } (x,t) \in X \times I,\\ \nonumber
	& \phi_f(x,0)=x \text{ for } x \in X,\\ \nonumber
	& \phi_f(\phi_f(x,t),s)= \phi_f(x,t+s) \text{ for } x \in X \text{ } t,s \in I \text{ with } t+s \in I,
\end{align}
is a solution map of the ODE~\eqref{eqn: ODE} over $X \times I$. For simplicity throughout the paper we will assume there exists a unique solution map to the ODE~\eqref{eqn: ODE} over all $(x,t) \in \R^n \times [0,\infty)$. Note that the uniqueness and existence of a solution map sufficient for the purposes of this paper, such as for initial conditions inside some invariant set (like the Basin of Attraction of an attractor set given in Eq.~\eqref{eqn: ROA}) and for all $t \ge 0$, can be shown to hold under minor smoothness assumption on $f$, see~\cite{Khalil_1996}.

An important property of solution maps, we next recall in Lem.~\ref{lem: diff soln map}, is that they inherit the smoothness of their associated vector field. This smoothness property of solution maps is used in the proof of Prop.~\ref{prop: LF implies attractor set}.
\begin{lem}[Smoothness of the solution map. Page 149 \cite{Hirch_2004}]
	\label{lem: diff soln map}
	Consider $f \in C^1(\R^n,\R^n)$. Then if $\phi_f$ is a solution map (satisfying Eq.~\eqref{soln map property}) then $\phi_f \in C^1(\R^n \times \R, \R)$. 
\end{lem}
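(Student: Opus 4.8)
The plan is to establish that all first-order partial derivatives of $\phi_f$ exist and are jointly continuous on the domain on which $\phi_f$ is defined; by the standard characterization of $C^1$ maps this yields $\phi_f \in C^1$. Differentiability in the time variable is immediate: the defining relation $\partial_t \phi_f(x,t) = f(\phi_f(x,t))$ exhibits $\partial_t \phi_f$ as the composition of the continuous maps $\phi_f$ and $f$, so $\partial_t \phi_f$ exists and is jointly continuous in $(x,t)$. The substantive content is therefore differentiability with respect to the initial condition $x$, together with joint continuity of that derivative.

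First I would introduce the variational (first-variation) equation. Fix $x$ in the domain and a compact time interval $[0,T]$; since $f \in C^1$, the matrix-valued map $t \mapsto A(t) := Df(\phi_f(x,t))$ is continuous on $[0,T]$, so the linear matrix ODE $\dot U(t) = A(t) U(t)$, $U(0) = I$, has a unique solution $U(\cdot\,;x)$ on $[0,T]$. The claim is that $\partial_x \phi_f(x,t) = U(t;x)$. To prove this claim I would use a Gr\"onwall estimate. Writing, for $h \in \R^n$, $f(\phi_f(x+h,t)) - f(\phi_f(x,t)) = \left(\int_0^1 Df\big(\phi_f(x,t) + s(\phi_f(x+h,t)-\phi_f(x,t))\big)\,ds\right)(\phi_f(x+h,t)-\phi_f(x,t))$, then subtracting the integral equations satisfied by $\phi_f(x+h,\cdot)$, $\phi_f(x,\cdot)$ and $t \mapsto U(t;x)h$, and using the uniform continuity of $Df$ on a compact neighbourhood of the (compact) trajectory together with the a priori Lipschitz-in-$x$ bound $\|\phi_f(x+h,t) - \phi_f(x,t)\| \le C(T)\|h\|$ (itself obtained by Gr\"onwall applied to the difference of two trajectories, using that $C^1$ implies locally Lipschitz), one arrives at an integral inequality $\|e(t)\| \le L \int_0^t \|e(s)\|\,ds + \|h\|\,\omega(\|h\|)$, where $e(t) := \phi_f(x+h,t) - \phi_f(x,t) - U(t;x)h$ and $\omega(\|h\|) \to 0$ as $\|h\| \to 0$. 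Gr\"onwall's inequality then gives $\|e(t)\| \le \|h\|\,\omega(\|h\|)\,e^{LT}$, which is $o(\|h\|)$ uniformly on $[0,T]$, so $\partial_x \phi_f(x,t)$ exists and equals $U(t;x)$.

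Finally I would establish joint continuity of $(x,t) \mapsto U(t;x)$. Since $\phi_f$ is jointly continuous and $Df$ is continuous, the coefficient map $(x,t) \mapsto Df(\phi_f(x,t))$ is jointly continuous; continuous dependence of solutions of linear ODEs on the time variable and on parameters then shows $U(t;x)$ is jointly continuous in $(x,t)$. Combining this with the joint continuity of $\partial_t \phi_f = f \circ \phi_f$ from the first step, every first-order partial of $\phi_f$ is continuous, whence $\phi_f \in C^1$. I expect the Gr\"onwall step to be the main technical obstacle: specifically, packaging the uniform-continuity modulus of $Df$ and the a priori closeness of nearby trajectories into a clean error estimate of order $o(\|h\|)$; the remaining steps are routine bookkeeping with standard existence, uniqueness, and continuous-dependence facts for ODEs.
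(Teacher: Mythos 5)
Your argument is correct: the paper does not prove this lemma but cites it from \cite{Hirch_2004}, and the proof given there is exactly your route --- the variational equation $\dot U = Df(\phi_f(x,t))U$, $U(0)=I$, identified with $\partial_x \phi_f$ via a Gr\"onwall estimate on $e(t)=\phi_f(x+h,t)-\phi_f(x,t)-U(t;x)h$, plus joint continuity from continuous dependence of linear ODEs on parameters and $\partial_t\phi_f = f\circ\phi_f$. So your proposal matches the standard proof of the cited result.
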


\subsection{Attractor Sets of Nonlinear ODEs}
A stable compact {attractor set} of the ODE~\eqref{eqn: ODE} is defined as follows.
\begin{defn} \label{defn: attractor set}
	We say that $A \subset \R^n$ is a \textbf{stable compact attractor set} of the ODE~\eqref{eqn: ODE}, defined by $f: \R^n \to \R^n$, if
	\begin{enumerate}
		\item $A$ is compact and nonempty ($A \ne \emptyset$).
		\item $A$ is a forward invariant set. That is if $\phi_f$ is a solution map of the ODE~\eqref{eqn: ODE} we have that,
		\begin{align} \label{set invariance}
	\phi_f(x,t) \in A \text{ for all } x \in A \text{ and } t \ge 0.
		\end{align}
		\item For each element of $A$ there is a neighbourhood of initial conditions for which the solution map asymptotically tends towards $A$. That is, for all $x \in A$ there exists $\delta>0$ such that for any $\eps>0$ there exists $T \ge 0$ for which
		\begin{align} \label{eqn: neighbourhood asym stab}
	D(\phi_f(y,t),A)< \eps \text{ for all } y \in B(x, \delta) \text{ and } t \ge T.
		\end{align}
\item {For all $\eps>0$ there exists $\delta>0$ such that if $x \in B(A, \delta)$ we have that $D(A, \phi_f(x,t))<\eps$ for all $t \ge 0$.}
	\end{enumerate}
	Furthermore, we say $A$ is a \textbf{minimal attractor set} if there does not exist any other attractor set, $B$, such that $B \subset A$, that is there exists $x \in A$ such that $x \notin B$ ($B$ is strictly contained in $A$).
\end{defn}
For simplicity we will often refer to {stable} compact attractor sets as attractor sets (leaving out the words {stable and} compact).

Note, in the case where $A\subset \R^n$ is a single point, that is $A= \{{x^*}\}$, the condition given in Eq.~\eqref{eqn: neighbourhood asym stab} reduces to the classical {conditions} of asymptotic stability of the equilibrium point ${x^*} \in \R^n$. That is, the condition  given in Eq.~\eqref{eqn: neighbourhood asym stab} reduces to requiring the existence of $\delta>0$ such that $ \lim_{t \to \infty}||\phi_f(x,t) - {x^*}||_2 =0$ for all $x \in B({x^*}, \delta)$, {along with the following stability condition: for all $\eps>0$ there exists a $\delta>0$  such that for all $t > 0$ we have that $\phi_f(B(\delta,x^*),t) \subset B(x^*,\eps)$.} 


Each attractor set of the ODE~\eqref{eqn: ODE} has an associated set of initial conditions for which solution maps initialized at these initial conditions converge towards the attractor set as $t \to \infty$. We call this set the {basin of attraction} of the attractor set and define it next.
\begin{defn} \label{defn: BOA}
Given an attractor set $A \subset \R^n$ of the ODE~\eqref{eqn: ODE} (defined by $f: \R^n \to \R^n$) we define the \textbf{basin of attraction} of $A$ as
	\begin{align} \label{eqn: ROA}
		BOA_f(A):=\left \{x \in \R^n: \lim_{t \to \infty} D(A,\phi_f(x,t))=0\right\}.
	\end{align}
\end{defn}
In the special case when the minimal attractor set is a single point the attractor set is commonly referred to as an equilibrium point and its associated basin of attraction is referred to as the region of attraction. However, although this special case is important for stability analysis, in general attractor sets can take more complicated structures such as limit cycles and in dimensions three and above (chaotic) ``strange attractors".


%
%
%

\section{A Lyapunov Approach to Finding and Certifying Minimal Attractor Sets} \label{sec: LF}

In this section, we propose a new Lyapunov characterization of attractor sets. To explain the motivation for this new characterization, consider a typical Lyapunov characterization of attractor sets, as given in~{\cite{bhatia2006dynamical}}.

\begin{thm}[Page~143 in~\cite{bhatia2006dynamical}] \label{thm: bhati LF compact attractor set}
{Consider {$f \in C^1(\R^n, \R^n)$}. A non-empty compact set $A \subset \R^n$ is an attractor set (Defn.~\ref{defn: attractor set}) of the ODE~\eqref{eqn: ODE}, defined by vector field $f$, if and only if there exists $V \in C(\R^n,\R)$ and $\Omega \subset \R^n$ such that
\begin{align} \label{e:1}
& A \subseteq \Omega^\circ. \\ \label{e:2}
& V(x)=0 \text{ if } x \in A \text{ and }  V(x)>0 \text{ if } x \in \Omega/ A.\\ \label{e:3}
&V(\phi_f(x,t))<V(x) \text{ for all } x \in \Omega / A \\ \nonumber 
& \hspace{1cm} \text{ and } t \in \{s \in (0,\infty): \phi_f(x,q) \in \Omega \text{ for all } q \in [0,s]  \}.
\end{align} }
\end{thm}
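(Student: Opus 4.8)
The plan is to prove the two implications of this biconditional separately, both by classical Lyapunov / LaSalle-type arguments. For the ``if'' direction, assume such $V$ and $\Omega$ exist and verify the four conditions of Definition~\ref{defn: attractor set}; the engine is that (e:3) makes $t\mapsto V(\phi_f(x,t))$ strictly decreasing along any orbit segment that remains in $\Omega$ and starts outside $A$. First I would establish forward invariance of $A$: if some orbit started at $x_0\in A$ left $A$, then (since $A\subseteq\Omega^\circ$, the orbit lies in $\Omega^\circ$ for small times) along a sequence $t_n\downarrow 0$ with $\phi_f(x_0,t_n)\notin A$ relation (e:3) would make $V(\phi_f(x_0,t_n))$ increasing in $n$, while continuity of $V$ forces $V(\phi_f(x_0,t_n))\to V(x_0)=0$ — impossible, since those values are positive; a short argument on the first escape time promotes this to full invariance. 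Next, stability (condition~4): for small $\eps$ with $\overline{B(A,\eps)}\subseteq\Omega^\circ$ the sphere $\{x:D(x,A)=\eps\}$ is compact and disjoint from $A$, so $c_\eps:=\min V$ on it is positive, and any orbit starting in the neighbourhood $\{x\in B(A,\eps):V(x)<c_\eps\}$ of $A$ cannot reach that sphere because (e:3) keeps $V<c_\eps$ while the orbit is in $\Omega$. Finally, local asymptotic attraction (condition~3): an orbit starting near $A$ remains in a compact positively invariant subset $K\subset\Omega$, so $V(\phi_f(x,t))$ decreases to a limit $L\ge 0$; the $\omega$-limit set $\omega(x)$ is nonempty, compact, invariant and carries the constant value $L$, so $L>0$ would contradict (e:3) on the invariant set $\omega(x)\subset\Omega/A$; hence $L=0$, and a compactness argument upgrades $V(\phi_f(x,t))\to 0$ to $D(\phi_f(x,t),A)\to 0$, with uniformity over a ball of initial conditions supplied by Dini's theorem applied to the non-increasing family $x\mapsto V(\phi_f(x,n))$.

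For the ``only if'' direction, assume $A$ is an attractor set. By condition~3 and compactness of $A$ there is an open neighbourhood $N\supseteq A$ from which all orbits converge to $A$; by condition~4 choose $\delta_0,\eps_0>0$ with $\phi_f(B(A,\delta_0),t)\subseteq B(A,\eps_0)\subseteq N$ for all $t\ge 0$, and set $\Omega:=\overline{\bigcup_{t\ge 0}\phi_f(B(A,\delta_0),t)}$, which is then a compact, forward-invariant neighbourhood of $A$ on which every orbit converges to $A$, so (e:1) holds. I would then form the monotonised function $W(x):=\sup_{t\ge 0}D(\phi_f(x,t),A)$, which vanishes exactly on $A$, is non-increasing along orbits, and satisfies $W(\phi_f(x,t))\to 0$; its continuity on $\Omega$ is lower semicontinuity (a sup of continuous functions) together with upper semicontinuity, the latter coming from the uniform attraction of orbits to $A$ on the compact set $\Omega$. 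Smoothing it by $V(x):=\int_0^\infty e^{-t}W(\phi_f(x,t))\,dt$ yields a continuous function, positive off $A$ and zero on $A$, that is strictly decreasing along orbits outside $A$: with $g(s)=V(\phi_f(x,s))$ one computes $g'(s)=g(s)-W(\phi_f(x,s))$, and $g(s)\le W(\phi_f(x,s))$ with equality only when $W$ is constant along the forward orbit of $\phi_f(x,s)$, which (because $W\to 0$) forces $\phi_f(x,s)\in A$; hence $g'<0$ off $A$, which is exactly (e:3) since $\Omega$ is forward invariant. A Tietze extension of $V$ from the closed set $\Omega$ to $\R^n$ completes the construction, leaving (e:1)--(e:3) intact since they only constrain $V$ on $\Omega$.

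I expect the main obstacle to be \emph{strictness} in both directions. In the sufficiency proof the difficulty is converting the strict decrease in (e:3) into genuine convergence to $A$ rather than mere monotone convergence of $V$ along orbits, which is handled by the $\omega$-limit / invariance-principle argument together with the Dini uniformity step. In the necessity proof the difficulty is producing a Lyapunov function that strictly decreases along trajectories while being only continuous; the ``integral of a monotonised distance'' construction is what achieves this, and its correctness rests on the non-trivial continuity of $W$, which in turn relies on the uniform attraction of orbits on compact subsets of the basin.
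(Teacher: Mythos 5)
The paper does not actually prove Theorem~\ref{thm: bhati LF compact attractor set}; it is quoted as a known result from page~143 of~\cite{bhatia2006dynamical}, so there is no in-paper proof to compare against. Judged on its own, your blind reconstruction is essentially correct and follows the classical Bhatia--Szeg\H{o}/converse-Lyapunov route. For sufficiency, your three steps are the right ones, and the delicate points you flag do close: the invariance step genuinely needs the re-basing at the last time the orbit lies in $A$ (since \eqref{e:3} only applies to initial points in $\Omega/A$), and it works because \eqref{e:3} as stated does not require the intermediate points of the orbit segment to avoid $A$; stability follows from positivity of $\min V$ on the compact shell $\{x:D(x,A)=\eps\}$ with $\overline{B(A,\eps)}\subseteq\Omega^\circ$; attraction follows from the $\omega$-limit/invariance argument (the limit set lies in $\overline{B(A,\eps)}\subseteq\Omega$ by the stability step), and Dini's theorem applied to $y\mapsto V(\phi_f(y,n))$ does deliver the uniformity over a ball demanded by condition~3 of Definition~\ref{defn: attractor set}, with monotonicity in $t$ filling in non-integer times. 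For necessity, $W(x)=\sup_{t\ge0}D(\phi_f(x,t),A)$ smoothed by $V(x)=\int_0^\infty e^{-t}W(\phi_f(x,t))\,dt$ is the standard construction; the identity $g'(s)=g(s)-W(\phi_f(x,s))$ together with $W(\phi_f(x,t))\to0$ gives strict decrease off $A$, and since your $\Omega$ is forward invariant this is exactly \eqref{e:3}. The only details to nail down are that the finite subcover of $A$ by the balls from condition~3 yields attraction that is \emph{uniform} on a full neighbourhood containing $\overline{B(A,\eps_0)}$ (not merely $B(A,\eps_0)$), which is what the upper semicontinuity of $W$ and hence continuity of $V$ rests on, and that the Tietze extension is harmless because \eqref{e:2}--\eqref{e:3} only constrain $V$ on $\Omega$; both are routine. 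In short, your proof is sound and matches the classical argument behind the cited source.
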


{Theorem~\ref{thm: bhati LF compact attractor set}} defines a method for certifying that a set $A \subset \R^n$ is an attractor set by searching for a Lyapunov function valid for $A$ -- an optimization problem with decision variable $V$. However, this formulation is not well-suited to the problem of \textit{finding} \textbf{minimal} attractor sets - a bilinear problem wherein both the attractor set $A$ and Lyapunov function $V$ are (unknown) decision variables. To resolve this problem, we propose Prop.~\ref{prop: LF implies attractor set}, wherein the proposed attractor set is defined as the 1-sublevel set of some function and hence there is only a single decision variable. {This decision variable can be thought of as a perturbed Lyapunov function. That is if $V$ is a Lyapunov function satisfying $\nabla V(x)^T f(x) < - V(x)$ then the perturbed Lyapunov function $\tilde{V}(x)= V(x)+1$ satisfies Eq.~\eqref{eqn: LF ineq} from Prop.~\ref{prop: LF implies attractor set}. }
	
	 {This perturbed Lyapunov function is no longer required to be zero over the attractor set (as in Eq.~\eqref{e:2}) making it more suitable to be searched for using polynomial optimization, since the only polynomial, $p$, that is zero over an open set is the zero polynomial $p \equiv 0$. Thus there may not exist a polynomial that satisfies Eqs.~\eqref{e:1}, \eqref{e:2} and~\eqref{e:3} from Theorem~\ref{thm: bhati LF compact attractor set}. On the other-hand, later in Section~\ref{sec: SOS problems for attractor set approx}, we will show that our proposed perturbed Lyapunov formulation, given in Prop~\ref{prop: LF implies attractor set}, allows us to combine the problems of certification and volume minimization of the attractor set using SOS programming and determinant maximization.}

\begin{prop} \label{prop: LF implies attractor set}
	Consider {$f \in C^1(\R^n, \R^n)$}. Suppose there exists $V \in C^1(\R^n, [0,\infty))$ such that
	\begin{align} \label{eqn: LF ineq}
\nabla V(x)^T f(x) \le - ( V(x) & - 1)  \text{ for all } x \in \Omega,\\ \label{eqn: S0 inside intererior}
\{x \in \Omega : V(x) \le 1\} & \subseteq \Omega^\circ,\\ \label{eqn: S0 non-empty}
\{x \in \Omega : V(x) \le 1\} & \ne \emptyset,
	\end{align}
	where $\Omega \subset \R^n$ is a compact set. Then $\{x \in \Omega : V(x) \le 1\}$ is an attractor set (Defn.~\ref{defn: attractor set}) to the ODE~\eqref{eqn: ODE} defined by $f$. \end{prop}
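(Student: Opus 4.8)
The plan is to verify the four defining properties of a stable compact attractor set (Definition~\ref{defn: attractor set}) for the set $S:=\{x \in \Omega : V(x) \le 1\}$. First I would dispatch the easy structural facts: $S$ is nonempty by Eq.~\eqref{eqn: S0 non-empty}, and $S$ is compact since it is the intersection of the compact set $\Omega$ with the closed sublevel set $\{V \le 1\}$ ($V$ being continuous); combined with Eq.~\eqref{eqn: S0 inside intererior}, $S \subseteq \Omega^\circ$, which is the ``$A \subseteq \Omega^\circ$''-type separation one needs to keep trajectories away from the boundary of $\Omega$ for short times.

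Next I would establish forward invariance (property 2). Let $x \in S$, so $V(x) \le 1$. Consider $g(t) := V(\phi_f(x,t))$; by Lemma~\ref{lem: diff soln map} the solution map is $C^1$, so $g$ is differentiable and $g'(t) = \nabla V(\phi_f(x,t))^T f(\phi_f(x,t)) \le -(g(t)-1)$ \emph{as long as $\phi_f(x,t) \in \Omega$}. A standard comparison/Gr\"onwall argument then gives $g(t) - 1 \le (g(0)-1)e^{-t} \le 0$, i.e.\ $V(\phi_f(x,t)) \le 1$. The subtlety here is purely the domain restriction: one must argue the trajectory cannot leave $\Omega$ while $V \le 1$ holds. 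This follows because $\{V \le 1\} \cap \Omega \subseteq \Omega^\circ$, so at the first putative exit time the trajectory would have to pass through a point of $\Omega$ where $V \le 1$ that lies in the interior---a continuity/connectedness argument on $[0,T]$ closes this (define $\tau = \sup\{t : \phi_f(x,s)\in\Omega \ \forall s\le t\}$ and derive a contradiction if $\tau < \infty$ using that $\phi_f(x,\tau)$ would be a boundary point of $\Omega$ yet satisfy $V \le 1$). Once invariance holds on $\Omega$, the comparison estimate is global in $t$ and gives $V(\phi_f(x,t)) \le 1$ for all $t \ge 0$, hence $\phi_f(x,t) \in S$.

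For the attraction property (property 3) and the stability property (property 4), I would use the same comparison estimate but started from a point $y$ near $S$ rather than in $S$. For stability: given $\eps > 0$, pick $\delta$ small enough that $B(S,\delta)$ is contained in $\Omega$ and that $V(y) \le 1 + \delta'$ on $B(S,\delta)$ for a suitably small $\delta'$ (possible by uniform continuity of $V$ on the compact set $\Omega$, together with $V \le 1$ on $S$); then $V(\phi_f(y,t)) - 1 \le \delta' e^{-t} \le \delta'$ for all $t\ge0$ (again using the invariance-type trapping argument to stay in $\Omega$), so $\phi_f(y,t)$ stays in the slightly enlarged sublevel set $\{V \le 1+\delta'\}$, which can be made to lie inside $B(S,\eps)$ by choosing $\delta'$ small---this last inclusion needs an argument that sublevel sets $\{V \le 1+\delta'\}$ shrink to $S$ as $\delta' \to 0^+$, which uses compactness of $\Omega$ and continuity of $V$. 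For attraction: from $V(\phi_f(y,t)) - 1 \le (V(y)-1)e^{-t} \to 0$, for any $\eps>0$ there is $T$ with $V(\phi_f(y,t)) \le 1 + \delta'(\eps)$ for $t \ge T$, hence $\phi_f(y,t) \in B(S,\eps)$, i.e.\ $D(\phi_f(y,t),S) < \eps$ for $t \ge T$, uniformly over $y \in B(x,\delta) \subseteq B(S,\delta)$.

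The main obstacle I anticipate is the careful handling of the domain restriction in Eq.~\eqref{eqn: LF ineq}: the differential inequality only holds on $\Omega$, so every application of the comparison lemma must be paired with an argument that the relevant trajectory does not escape $\Omega$ before the desired conclusion is reached. The key enabler for this is precisely Eq.~\eqref{eqn: S0 inside intererior} (and its robust version for slightly larger sublevel sets), which provides a ``buffer zone'' inside $\Omega^\circ$ so that a trajectory with $V \le 1$ (or $V \le 1+\delta'$) is strictly interior and cannot be about to cross $\partial \Omega$. The secondary technical point is the lemma that $\{x\in\Omega: V(x)\le 1+\delta'\} \to S$ (in the Hausdorff / set-inclusion sense) as $\delta' \to 0^+$, which I would prove by a standard compactness argument: if not, there is a sequence $x_k \in \Omega$ with $V(x_k) \le 1 + 1/k$ but $D(x_k,S) \ge \eta>0$; extract a convergent subsequence $x_k \to x^* \in \Omega$ with $V(x^*) \le 1$, so $x^* \in S$, contradicting $D(x^*,S) \ge \eta$. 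Everything else---positivity of $V$ is given, $C^1$ of the composition is Lemma~\ref{lem: diff soln map}, and the Gr\"onwall step---is routine.
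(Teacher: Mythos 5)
Your proposal is correct, but it takes a genuinely different route from the paper. The paper does not verify Definition~\ref{defn: attractor set} directly: it sets $A=\{x\in\Omega: V(x)\le 1\}$, composes $V$ with a smooth cutoff $\rho$ that vanishes on $(-\infty,1]$ and is strictly increasing beyond $1$, and checks that $\tilde V=\rho\circ V$ satisfies the hypotheses of the cited characterization in Theorem~\ref{thm: bhati LF compact attractor set} (zero exactly on $A$, positive and strictly decreasing along trajectories on $\Omega\setminus A$), so the dynamical work (invariance, attraction, stability) is outsourced to that converse theorem. You instead verify the four properties of Definition~\ref{defn: attractor set} from scratch: compactness and nonemptiness from Eqs.~\eqref{eqn: S0 inside intererior}--\eqref{eqn: S0 non-empty}, then a Gr\"onwall comparison on $V(\phi_f(x,t))-1$ coupled with a first-exit-time trapping argument (using that the relevant sublevel set sits in $\Omega^\circ$ so the trajectory cannot reach $\partial\Omega$ while the estimate holds), and finally the compactness lemma that $\{x\in\Omega:V(x)\le 1+\delta'\}$ shrinks into any neighborhood of the $1$-sublevel set as $\delta'\to 0^+$, which yields both the stability property and uniform attraction. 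Your argument is self-contained and, notably, is essentially the same machinery the paper deploys later for its Corollary on the basin of attraction and for Lemmas~\ref{lem: S0 is nonempty}--\ref{lem: S0 contains minimal attractor}, so it is fully consistent with the paper's toolkit; what the paper's route buys is brevity and avoidance of the delicate exit-time and shrinking-sublevel-set bookkeeping, at the cost of leaning on the external Bhatia--Szeg\H{o} characterization, while your route buys a direct, quantitative proof (with the explicit decay rate $e^{-t}$) that does not require that citation. The only points to make rigorous in a written version are the ones you already flag: every Gr\"onwall application must be preceded by the trapping argument on the enlarged sublevel set $\{x\in\Omega:V(x)\le 1+\delta'\}\subseteq\Omega^\circ$ (valid for small $\delta'$ by your compactness lemma), since Eq.~\eqref{eqn: LF ineq} is only assumed on $\Omega$.
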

Note that the Lyapunov function $V$ in Prop.~\ref{prop: LF implies attractor set} is not required to be positive semidefinite. However, later in Section~\ref{sec: SOS problems for attractor set approx} we will include a positivity constraint on $V$ -- allowing us to minimize the volume of the $1$-sublevel set. 


\begin{proof}[Proof of Proposition~\ref{prop: LF implies attractor set}] 
{Suppose there exists $V$ that satisfies Eqs.~\eqref{eqn: LF ineq}, \eqref{eqn: S0 inside intererior}, and~\eqref{eqn: S0 non-empty}. We will now construct a compact set $A \subset \R^n$ and a function $\tilde{V} \in C^1(\R^n,\R)$ that satisfies Eqs.~\eqref{e:1}, \eqref{e:2} and~\eqref{e:3} from Theorem~\ref{thm: bhati LF compact attractor set}, hence showing $A$ is an attractor set. Let $A:=\{x \in \Omega : V(x) \le 1\}$. We first show that $A$ is compact.  Since $V$ is continuous it follows that $A:=\{x \in \Omega : V(x) \le 1  \}$ is closed by Lemma~\ref{lem: sublevel set is closed}. Moreover, $A$ is bounded since $A \subseteq \Omega^\circ$ and $\Omega$ is bounded. Since $A \subset \R^n$ is closed and bounded it follows that $A$ is a compact set. }

{Now, consider $\rho \in C^\infty(\R^n,\R)$ defined as
\begin{align*}
	\rho(x):= \begin{cases}
		e^{\frac{-1}{(1-x)^2}} \text{ if } x >1\\
			0 \text{ otherwise}
	\end{cases}.
\end{align*}
Define $\tilde{V} \in C^1(\R^n,\R)$ as
\begin{align*}
	\tilde{V}(x):=\rho(V(x)),
\end{align*}
where $V$ satisfies Eqs.~\eqref{eqn: LF ineq}, \eqref{eqn: S0 inside intererior}, and~\eqref{eqn: S0 non-empty}.}

{
Now, clearly  Eq.~\eqref{e:1} is trivially satisfied by $A:=\{x \in \Omega : V(x) \le 1\}$ using Eq.~\eqref{eqn: S0 inside intererior}. Also, $\tilde{V}$ satisfies Eq.~\eqref{e:2} since if $x \in A$ then $V(x) \le 1$ and hence $\tilde{V}(x)= \rho(V(x))=0$. On the other hand if $x \in \Omega/ A$  then $V(x)>1$ and hence $\tilde{V}(x)= \rho(V(x))>0$. Finally, we next show $\tilde{V}$ satisfies Eq.~\eqref{e:3}. If $x \in \Omega/ A$ then $V(x)>1$ and hence,
\begin{align} \label{e: LF int}
	 \nabla \tilde{V}(x)^Tf(x)  & =\rho'(V(x)) \nabla V(x)^T f(x) \\ \nonumber
	&= \frac{-2}{(1-V(x))^3} \rho(V(x)) \nabla V(x)^T f(x) \\ \nonumber
	& \le \frac{-2}{(1-V(x))^3} \rho(V(x))  (1-V(x)) <0.
\end{align} 
Eq.~\eqref{e: LF int} implies $\frac{d}{dt} \tilde{V}(\phi_f(x,t))<0$ $\text{ for all } x \in \Omega / A \text{ and } t \in \{s \in (0,\infty): \phi_f(x,q) \in \Omega  \text{ for all } q \in [0,s]\}.$ Note that if $t \in \{s \in (0,\infty): \phi_f(x,q) \in \Omega  \text{ for all } q \in [0,s]\}$ then $\phi_f(x,s) \in \Omega$ for all $s \in [0,t]$ and hence  $\frac{d}{dt} \tilde{V}(\phi_f(x,s))<0$ $\text{ for all } x \in \Omega / A \text{ and } s \in [0,t]$. Thus, $\int_0^t \tilde{V}(\phi_f(x,s)) ds<0$ and hence by the fundamental theorem of calculus it follows that $\tilde{V}$ satisfies Eq.~\eqref{e:3}.}

{Therefore $A=\{x \in \Omega : V(x) \le 1  \}$ is an attractor set by Thm.~\ref{thm: bhati LF compact attractor set}.}
\end{proof}
If $V$ and $\Omega$ satisfy Eqs.~\eqref{eqn: LF ineq}, \eqref{eqn: S0 inside intererior}, and~\eqref{eqn: S0 non-empty} (as in Prop.~\ref{prop: LF implies attractor set}) and $\{x \in \Omega : V(x) \le 1 + a\} \subseteq \Omega^\circ$ for some $a \ge 0$, then we next show that $\{x \in \Omega : V(x) \le 1 + a\}$ is a subset of the basin of attraction of the attractor set $\{x \in \Omega : V(x) \le 1 \}$.
\begin{cor}
	Consider {$f \in LocLip(\R^n, \R^n)$}. Suppose there exists $V \in C^1(\R^n, [0,\infty))$ and a compact set $\Omega \subset \R^n$ satisfying Eqs.~\eqref{eqn: LF ineq}, \eqref{eqn: S0 inside intererior}, and~\eqref{eqn: S0 non-empty} (as in Prop.~\ref{prop: LF implies attractor set}). Then, for any $a> 0$ such that $\{x\in \Omega : V(x) \le 1+a \} \subseteq \Omega^\circ$ it follows that $\{x\in \Omega: V(x) \le 1+a \} \subseteq BOA_f(\{x\in \Omega : V(x) \le 1 \})$.
\end{cor}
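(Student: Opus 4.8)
The plan is to show that every trajectory that starts in $S_a := \{x\in\Omega : V(x)\le 1+a\}$ remains in $\Omega$ for all $t\ge 0$ with $V$-value decaying exponentially towards $1$, and then to convert this into convergence to $A := \{x\in\Omega : V(x)\le 1\}$ by a compactness argument. First I would record that, for every $b\in[0,a]$, the set $S_b := \{x\in\Omega : V(x)\le 1+b\}$ is closed (Lemma~\ref{lem: sublevel set is closed}) and, being contained in $S_a\subseteq\Omega^\circ$ with $\Omega$ compact, is bounded, hence compact; note also $S_0=A$ and $S_b\subseteq S_a$ for $b\le a$.

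Next I would establish the key differential inequality. Fix $x_0\in S_a$ and set $g(t):=V(\phi_f(x_0,t))-1$, which is well defined and $C^1$ in $t$ by the standing global-existence assumption together with continuity of $f$. Let $\tau:=\sup\{T\ge 0:\phi_f(x_0,t)\in\Omega\text{ for all }t\in[0,T]\}$. On $[0,\tau)$ the chain rule and Eq.~\eqref{eqn: LF ineq} give $g'(t)=\nabla V(\phi_f(x_0,t))^Tf(\phi_f(x_0,t))\le -g(t)$, so $\frac{d}{dt}\bigl(e^{t}g(t)\bigr)\le 0$ and therefore $g(t)\le g(0)e^{-t}\le a e^{-t}\le a$ on $[0,\tau)$ (using $g(0)\le a$ and $0<e^{-t}\le 1$); in particular $\phi_f(x_0,t)\in S_a\subseteq\Omega^\circ$ there. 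The delicate step is ruling out $\tau<\infty$: if $\tau$ were finite then $\phi_f(x_0,\tau)\in\Omega$ (as $\Omega$ is closed) and $g(\tau)\le a$ by continuity, so $\phi_f(x_0,\tau)\in S_a\subseteq\Omega^\circ$; continuity of $t\mapsto\phi_f(x_0,t)$ would then yield $\eps>0$ with $\phi_f(x_0,t)\in\Omega^\circ$ for $t\in[\tau,\tau+\eps)$, contradicting maximality of $\tau$. Hence $\tau=\infty$, so $\phi_f(x_0,t)\in\Omega$ and $V(\phi_f(x_0,t))\le 1+a e^{-t}$ for all $t\ge 0$.

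It then remains to deduce $\lim_{t\to\infty}D(A,\phi_f(x_0,t))=0$. Given $\eps>0$, I claim there is $\delta\in(0,a]$ with $S_\delta\subseteq B(A,\eps)$. Otherwise there are $x_n\in\Omega$ with $V(x_n)\le 1+\tfrac1n$ and $D(x_n,A)\ge\eps$; by compactness of $\Omega$ a subsequence converges to some $x^\ast\in\Omega$, and continuity of $V$ forces $V(x^\ast)\le 1$, i.e. $x^\ast\in A$, while continuity of $x\mapsto D(x,A)$ forces $D(x^\ast,A)\ge\eps>0$, a contradiction. Now pick $T$ with $a e^{-T}\le\delta$; for $t\ge T$ we have $\phi_f(x_0,t)\in\Omega$ and $V(\phi_f(x_0,t))\le 1+a e^{-t}\le 1+\delta$, hence $\phi_f(x_0,t)\in S_\delta\subseteq B(A,\eps)$, i.e. $D(A,\phi_f(x_0,t))<\eps$. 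Since $\eps$ was arbitrary, $x_0\in BOA_f(A)$, which is the claim.

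The main obstacle is the no-escape argument in the second paragraph: because Eq.~\eqref{eqn: LF ineq} is assumed only on $\Omega$, the Gronwall estimate is a priori valid only on the maximal interval on which the trajectory is known to remain in $\Omega$, and the argument must be closed with the topological observation that $S_a\subseteq\Omega^\circ$ prevents the trajectory from reaching $\partial\Omega$. The remaining ingredients — compactness of the sublevel sets, the shrinking-neighbourhood claim, and the choice of $T$ from the exponential bound — are routine.
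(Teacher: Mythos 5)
Your proof is correct, and it follows the same overall strategy as the paper — confine the trajectory to $\Omega$, apply a Gronwall-type bound to get $V(\phi_f(x_0,t))\le 1+ae^{-t}$, and then convert containment in a slightly larger sublevel set into $D(\phi_f(x_0,t),A)<\eps$ — but the two delicate steps are implemented differently. For confinement, the paper first proves that $S_a$ is forward invariant by a contradiction argument combining the intermediate value theorem and the mean value theorem (producing a point where $\frac{d}{dt}V(\phi_f(y,\cdot))$ is simultaneously positive and $\le -a$), and only then applies Gronwall globally; you instead define the maximal time $\tau$ during which the trajectory stays in $\Omega$, run the Gronwall estimate on $[0,\tau)$, and rule out $\tau<\infty$ using closedness of $\Omega$, continuity, and $S_a\subseteq\Omega^\circ$ — an escape-time argument that is arguably cleaner and avoids the derivative sign bookkeeping, though it proves trajectory-wise confinement rather than invariance of $S_a$ as a set (which is all the corollary needs). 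For the neighbourhood step, the paper constructs an explicit threshold $\eta<\inf_{z\in\Omega/B(S_0,\eps)}V(z)-1$ via the extreme value theorem, with a separate case when $\Omega/B(S_0,\eps)=\emptyset$; you obtain the existence of a suitable $\delta$ with $S_\delta\subseteq B(A,\eps)$ by a sequential compactness contradiction, which is shorter and dispenses with the case split, at the cost of being non-quantitative (the paper's explicit $\eta$ also directly feeds the time bound $T=\ln(a/\eta)$, whereas you simply choose $T$ with $ae^{-T}\le\delta$, which is equivalent). No gaps: your handling of the endpoint $t=\tau$ (closedness of $\Omega$, continuity of $g$) and of the chain rule on $[0,\tau)$ is sound.
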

\begin{proof}
	{Throughout this proof we will use the following notation: $S_a:=\{x \in \Omega : V(x) \le 1 +a  \}$ where $a \ge 0$.}
	
	{\underline{\textbf{Proof $S_a \subseteq \Omega^\circ$ is an invariant set:}} We now prove that if $S_a \subseteq \Omega^\circ$, where $a \ge 0$, then $S_a$ is an invariant set. To see this, suppose for contradiction that there exists $y \in S_a$ and $T \ge 0$ such that $\phi_f(y,T) \notin S_a$. That is $V(\phi_f(y, 0)) \le 1 +a$ and $V(\phi_f(y, T)) > 1 +a$.  Now, since $V(\phi_f(y, \cdot))$ is {continuous} (since $V$ is continuous, $\phi_f$ is continuous by Lem.~\ref{lem: diff soln map}, and the composition of continuous functions is continuous) it follows by the intermediate value theorem that there exists $0 \le s_1 <s_2 \le T$ such that $V(\phi_f(y, s_1))=1 +a$ and $V(\phi_f(y, t))>1 +a$ for all $t \in (s_1,s_2]$. Thus $\phi_f(y, s_1) \in S_a \subseteq \Omega^\circ$ but $\phi_f(y, t) \notin S_a$ for all $t \in (s_1,s_2]$. Since $\Omega^\circ$ is open and $\phi_f(y, s_1) \in S_a \subseteq \Omega^\circ$ there exists $\eps>0$ such that $B(\phi_f(y, s_1), \eps) \subset \Omega^\circ$. Again, using the continuity of $V(\phi_f(y, \cdot))$ there exists $\delta>0$ such that $\phi_f(y, s_1 +s) \in B(\phi_f(y, s_1), \eps) \subseteq \Omega^\circ$ for all $s \in [0, \delta]$. Therefore, $V(\phi_f(y, t))>1+a$ and $\phi_f(y, t) \in \Omega^\circ$ for all $t \in (s_1,s_3]$, where $s_3:=\min\{s_2,s_1+\delta\}$. Applying the mean value theorem there exists $s_1 < c < s_3$ such that
	\begin{align} \label{contra 1}
	\frac{d}{dt} V(\phi_f(y,c)) & = \frac{V(\phi_f(y,s_3)) - V(\phi_f(y,s_1))}{s_3 - s_1} \\ \nonumber
	&> \frac{1+a-1-a}{s_3 - s_1}=0.
	\end{align}
	On the other hand since $\phi_f(y, t) \in \Omega^\circ$ for all $t \in (s_1,s_3]$ it follows that $\phi_f(y,c) \in \Omega^\circ$ and therefore Eq.~\eqref{eqn: LF ineq} can be applied to give
	\begin{align}\label{contra 2}
	\frac{d}{dt} V(\phi_f(y,c)) \le 1-V(\phi_f(y,c)) < 1-1-a =-a,
	\end{align}
	using the fact that $c \in (s_1,s_3)$ and $V(\phi_f(y, t))>1+a$ for all $t \in (s_1,s_3]$. }
	
	{Now Eqs.~\eqref{contra 1} and~\eqref{contra 2} contradict each other proving $S_a$ is invariant.}

	{\underline{\textbf{Proof $S_a \subseteq BOA_f(S_0)$:}} Since, $S_a \subset \Omega^\circ$ is invariant it follows that for any $x \in S_a$ we have that $\phi_f(x,t) \in \Omega^\circ$ for all $t \ge 0$. Thus, by Eq.~\eqref{eqn: LF ineq} we have that
		\begin{align*}
		\frac{d}{dt} V(\phi_f(x,t)) \le - &  (V(\phi_f(x,t)) - 1)  \\
		&\text{ for all } t \in [0,\infty).
		\end{align*}
		Now using Gronwall's inequality (Lem.~\ref{lem: gronwall}) and the fact $x \in S_a$ we have that
		\begin{align*}
		V(\phi_f(x,t)) - 1 \le e^{- t} & ( V(x) - 1) \le a e ^{- t} \\
		& \text{ for all } t \in [0,\infty).
		\end{align*}
		Therefore, it now follows for any $\eta>0$ that
		\begin{align} \label{222}
		\phi_f(x,t) \in S_\eta \text{ for all }  t \ge  \ln \left( \frac{a}{\eta} \right).
		\end{align} }

		{For any $\eps>0$ we will now construct $\eta>0$ such that $S_\eta \subseteq {B(S_0, \eps)}$ (recalling the notation $B(S_0, \eps)$ is defined in Sec~\ref{subsec: set notation}) implying that if $\phi_f(x,t) \in S_\eta$ for all $t \ge T$ then $D(S_0,\phi_f(x,t) ) < \eps$ for all $t \ge T$.}
		
		{First note that if $\Omega / B(S_0,\eps)= \emptyset$ then $\Omega \subseteq B(S_0,\eps)$ and we can trivially take $\eta= a$. Then by Eq.~\eqref{222} we have that $ \phi_f(x,t) \in S_a \subseteq \Omega^\circ \subseteq B(S_0,\eps)$ for all $t \ge 0$. Thus, $D(\phi_f(x,t), S_0)< \eps$ for all $t \ge 0$. }
		
		{Let us now consider the case $\Omega / B(S_0,\eps) \ne \emptyset$. Let $\eta \in  (0, b)$ where $b= \min\{\inf_{ z \in \Omega / B(S_0,\eps) }V(z) - 1,\frac{a}{2}\} $, where $\inf_{ z \in \Omega / B(S_0,\eps) }V(z)$ exists since $\Omega / B(S_0,\eps)$ is compact and $V$ is continuous. Note that $b>0$ since $a>0$ and $\inf_{ z \in \Omega / B(S_0,\eps) }V(z) - 1>0$ (because $\Omega / B(S_0,\eps)$ is compact so by the extreme value theorem there exists $z^* \in \Omega / B(S_0,\eps)$ such that $V(z^*)=\inf_{ z \in \Omega / B(S_0,\eps) }V(z)$ and since $z^* \notin S_0$ it follows that $V(z^*)> 1$).}

		 {We now claim that $S_\eta \subseteq B(S_0, \eps)$. First we note that $S_\eta \subseteq \Omega^\circ$ since $S_\eta \subset S_a$ and $S_a \subset \Omega^\circ$. Now suppose for contradiction that $S_\eta \nsubseteq B(S_0, \eps)$. Then there exists $w \in S_\eta \subseteq \Omega$ such that $w \notin B(S_0, \eps)$ implying $w \in \Omega / B(S_0, \eps)$. Now, $V(w) \le \eta + 1 < \inf_{ z \in \Omega / B(S_0,\eps) }\{V(z)\} \le V(w)$ implying $0<0$, providing a contradiction. }
		 
		 {Therefore we have shown that for any $x \in S_a$ and $\eps>0$ there exists $T \ge 0$ such that $D(\phi_f(x,t),S_0)<\eps$ implying $x \in BOA_f(S_0)$ and hence $S_a \subseteq BOA_f(S_0)$.}
\end{proof}

In Prop.~\ref{prop: LF implies attractor set} we have shown that if a function $V$ satisfies Eqs.~\eqref{eqn: LF ineq}, \eqref{eqn: S0 inside intererior} and~\eqref{eqn: S0 non-empty} then the $1$-sublevel set of $V$ is an attractor set of the ODE defined by $f$. In the next section we now prove that these Lyapunov characterizations of attractor sets are not conservative, even when $V$ is restricted to be an SOS polynomial.

\section{Converse Lyapunov Functions for Attractor Set Characterization} \label{sec:converse LF}
In the previous section, we have shown that if there exists a function $V$ which satisfies Eqs.~\eqref{eqn: LF ineq}, \eqref{eqn: S0 inside intererior} and~\eqref{eqn: S0 non-empty}, then the set $\{x \in \Omega : V(x)\le 1\}$ is an attractor set of the ODE defined by $f \in LocLip(\R^n, \R^n)$. In this section, we show that for \textbf{any} attractor set $A \subset \R^n$ and any $\epsilon>0$, there exists an SOS function $V$ which satisfies Eq.~\eqref{eqn: LF ineq} and for which $A \subset \{x \in \Omega : V(x)\le 1\} $ and $D_V(A, \{x \in \Omega : V(x)\le 1\})\le \epsilon $. This implies that the Lyapunov characterization of attractor sets in Section~\ref{sec: LF} is not conservative and furthermore, these conditions remain tight even when the Lyapunov functions are constrained to be SOS. In Section~\ref{sec: SOS problems for attractor set approx}, we will use this result to propose a sequence of SOS programming problems whose limit yields an attractor set which is arbitrarily close to the minimal attractor set.


{To begin, we quote a result on existence of smooth converse Lyapunov function from~\cite{teel2000smooth}.}

\begin{cor}[Cor.~2 in~\cite{teel2000smooth}] \label{cor: teel existsnece of smooth LF}
Consider {$f \in LocLip(\R^n, \R^n)$}. The set $A \subset \R^n$ is an attractor set to the ODE~\eqref{eqn: ODE} if and only if there exists $V \in C^\infty(BOA_f(A), \R)$ such that
\begin{enumerate}
	\item $V(x) \ge 0$ for all $x \in BOA_f(A)$ and $V(x) =0$ if and only if $x \in A$.
	\item $\nabla V(x) ^T f(x) \le - V(x)$ for all $x \in BOA_f(A)$.
\end{enumerate}
\end{cor}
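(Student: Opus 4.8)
The plan is to handle the two implications separately; the forward (``only if'') direction is the genuine converse-Lyapunov construction and will carry essentially all of the difficulty. Throughout, $A$ is understood to be a nonempty compact set, as in Defn.~\ref{defn: attractor set}.

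For the ``if'' direction, suppose $V \in C^\infty(BOA_f(A),\R)$ satisfies conditions~1 and~2. First I would note that $BOA_f(A)$ is forward invariant, directly from Defn.~\ref{defn: BOA} and the semigroup property in Eq.~\eqref{soln map property}, so that for any $x \in BOA_f(A)$ the map $t \mapsto V(\phi_f(x,t))$ is $C^1$ on $[0,\infty)$ and, by condition~2, satisfies $\frac{d}{dt}V(\phi_f(x,t)) \le -V(\phi_f(x,t))$. Gronwall's inequality (Lem.~\ref{lem: gronwall}) then yields $V(\phi_f(x,t)) \le e^{-t}V(x)$ for all $t \ge 0$. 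Taking $x \in A$ and using $V \ge 0$, $V|_A = 0$ forces $V(\phi_f(x,t)) = 0$, i.e. $\phi_f(x,t)\in A$, which is the forward invariance Eq.~\eqref{set invariance}. For the attractivity and stability clauses I would use that the sublevel sets $\{x \in BOA_f(A): V(x) \le c\}$ decrease to $A$ as $c \downarrow 0$ --- a consequence of continuity of $V$, compactness of $A$, and $V^{-1}(0)=A$: given $\eps>0$ choose $c>0$ with $\{V \le c\} \subseteq B(A,\eps)$, then $\delta>0$ with $V < c$ on $B(A,\delta)$ by continuity; the bound $V(\phi_f(x,t)) \le e^{-t}V(x) \le c$ for $x \in B(A,\delta)$ gives $\phi_f(x,t)\in B(A,\eps)$ for all $t \ge 0$ (clause~4 of Defn.~\ref{defn: attractor set}) and, since $e^{-t}V(x) \to 0$, also $D(\phi_f(x,t),A)\to 0$ uniformly over $B(x,\delta)$, i.e. Eq.~\eqref{eqn: neighbourhood asym stab}. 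Openness of $BOA_f(A)$, which makes the neighbourhood $B(x,\delta)$ in that clause available, follows from the stability just shown together with local existence of solutions.

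For the ``only if'' direction I would follow the standard Massera--Kurzweil--Lin--Sontag--Wang--Teel--Praly route. Step~1: promote Defn.~\ref{defn: attractor set} to a uniform estimate --- there exist a neighbourhood $U \subseteq BOA_f(A)$ of $A$ and a class-$\mathcal{KL}$ function $\beta$ with $D(\phi_f(x,t),A) \le \beta(D(x,A),t)$ for all $x \in U$, $t \ge 0$ --- obtained from compactness of $A$ and a covering/continuity argument on the flow. Step~2: apply Sontag's lemma on $\mathcal{KL}$ estimates to write $\beta(r,s) \le \sigma_1(\sigma_2(r)e^{-s})$ with $\sigma_1,\sigma_2 \in \mathcal{K}_\infty$. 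Step~3: define a first, merely locally Lipschitz, Lyapunov function on $BOA_f(A)$, e.g. $V_0(x) := \sup_{t \ge 0} \sigma_2(D(\phi_f(x,t),A))\,e^{t}$ (finite on $U$ by Step~2, and extended to all of $BOA_f(A)$ by first flowing into $U$), which by construction satisfies the integrated dissipation $V_0(\phi_f(x,t)) \le e^{-t}V_0(x)$, hence $\limsup_{h \downarrow 0}\tfrac1h\big(V_0(\phi_f(x,h))-V_0(x)\big) \le -V_0(x)$ in the Dini sense, together with $V_0 > 0$ off $A$ and $V_0 = 0$ on $A$.

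Step~4 --- the smoothing --- is where I expect the real obstacle. One cannot mollify $V_0$ with a single fixed kernel, because $BOA_f(A)$ is in general an open proper subset of $\R^n$ and $V_0$ blows up towards its boundary; instead I would take a locally finite cover of $BOA_f(A)$, mollify on each piece with a position-dependent radius $\rho(x)$ small enough that the error introduced into the decrease inequality at $x$ stays below a preassigned margin, and glue the pieces with a smooth partition of unity, producing a $C^\infty$ function $\widehat V$ with $\nabla\widehat V(x)^T f(x) \le -\tfrac12\widehat V(x)$ plus a controllable remainder. Finally I would compose with a suitable $\kappa \in \mathcal{K}_\infty$ (and rescale if necessary) to absorb the remainder, so that $V := \kappa \circ \widehat V$ is $C^\infty$ on $BOA_f(A)$, vanishes precisely on $A$, is positive on $BOA_f(A)\setminus A$, and satisfies $\nabla V(x)^T f(x) \le -V(x)$ there. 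The delicate bookkeeping is choosing $\rho(\cdot)$ compatible simultaneously with the local Lipschitz modulus of $V_0$, the local variation of $f$, and the distance to $\partial BOA_f(A)$; Steps~1--3 are routine once the uniform $\mathcal{KL}$ estimate is available. Since the statement is exactly Cor.~2 of~\cite{teel2000smooth}, the complete argument can be found there.
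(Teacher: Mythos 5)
This statement is not proved in the paper at all: it is quoted verbatim as Cor.~2 of the cited reference \cite{teel2000smooth}, and the paper's ``proof'' is simply the citation. So there is no in-paper argument to compare against; your sketch is essentially a reconstruction of the standard Massera--Kurzweil--Lin--Sontag--Wang--Teel--Praly proof that lives in the cited reference ($\mathcal{KL}$ estimate from compactness of $A$, Sontag's lemma, a Lipschitz value-function-type $V_0$ built from $\sup_{t\ge 0}\sigma_2(D(\phi_f(x,t),A))e^{t}$, then smoothing with position-dependent mollification), and deferring the smoothing details to that reference is exactly what the authors themselves do.

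Two caveats on your ``if'' direction, which the paper never needs (it only uses the ``only if'' direction, in the proof of Theorem~\ref{thm: existence of sos LF}). First, the step ``given $\eps>0$ choose $c>0$ with $\{V\le c\}\subseteq B(A,\eps)$'' does not follow from continuity of $V$, compactness of $A$ and $V^{-1}(0)=A$ alone: on the open set $BOA_f(A)$ a continuous nonnegative function vanishing exactly on $A$ can have arbitrarily small values far from $A$ (near $\partial BOA_f(A)$ or at infinity), so every sublevel set $\{V\le c\}$ may fail to be contained in any bounded neighbourhood of $A$. The standard repair is to work inside a fixed compact neighbourhood $K\subset BOA_f(A)$ of $A$, take $c<\min_{x\in K\setminus B(A,\eps)}V(x)$, and use the decrease of $V$ along trajectories to show the set $\{x\in K: V(x)\le c\}$ is forward invariant before letting $c\downarrow 0$; with that restriction your Gronwall argument goes through. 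Second, note a mild circularity in the statement itself: $BOA_f(A)$ is only defined (Defn.~\ref{defn: BOA}) once $A$ is known to be an attractor set, so in the ``if'' direction one must either read $BOA_f(A)$ as some open invariant neighbourhood of $A$ or accept the convention of the source; this is an artifact of how the paper quotes the result rather than a flaw in your argument.
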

{Next, in} Thm.~\ref{thm: existence of sos LF} we use Cor.~\ref{cor: teel existsnece of smooth LF} to show that for any given attractor set $A \subset \R^n$ there exists a sequence of Sum-of-Squares (SOS) polynomials, each satisfying {Eqs.~\eqref{eqn: LF ineq} and~\eqref{eqn: S0 inside intererior}}, each of whose 1-sublevel sets contain $A$, and whose 1-sublevel sets converge to $A$ (with respect to the volume metric). {Note that in order to show our SOS approximation, $P$, satisfies Eq.~\eqref{eqn: S0 inside intererior} we show $P(x)>1+\alpha$ for all $x \in \partial \Omega$ for some $\alpha>0$.}

To construct such an SOS function required in the proof of Thm.~\ref{thm: existence of sos LF}, we approximate the square root of the converse Lyapunov function $V$ (from Cor.~\ref{cor: teel existsnece of smooth LF}), perturbed by a positive constant, by a polynomial function. We then square this polynomial approximation to get an SOS approximation. Note that we perturb $V$ by a positive constant $\gamma>0$ since the image of $V$ includes $\{0\}$. Hence, without this perturbation, it follows that the square root of $V$ may not be differentiable, making it challenging to approximate it by a polynomial. To overcome this problem, rather than approximating $\sqrt{V(x)}$ we approximate $\sqrt{V(x) + \gamma}$.


\begin{thm} \label{thm: existence of sos LF}
	For $f \in LocLip(\R^n, \R^n)$, suppose $A \subset \R^n$ is an attractor set of the ODE~\eqref{eqn: ODE} defined by $f$ and let $\Omega$ be any compact set such that $A \subseteq \Omega^\circ$ and $\Omega \subset BOA_f(A)$. {Then there exists a sequence of polynomials, $\{P_d\}_{d \in \N} \subset \sum_{SOS}(\R^n, \R)$ with $P_d \in \sum_{SOS}^d(\R^n, \R)$ for all $d \in \N$, a positive definite scalar $\alpha>0$ and  an integer $N \in \N$ such that}
	\begin{enumerate}
		\item $\nabla P_d(x) ^T f(x) < -  (P_d(x) - 1)$ for all $x \in \Omega$ and $d \ge N$.
		\item $P_d(x) >1 + \alpha$ for all $x \in \partial \Omega$ and $d \ge N$.
			\item $A \subseteq \{x \in \Omega: P_d(x) \le 1 \}$ for all $d \ge N$.
			\item $\lim_{d \to \infty} D_V(A, \{x \in \Omega: P_d(x) \le 1 \})=0$ (recalling $D_V$ denotes the volume metric defined in Sec.~\ref{subsec: set metric notation}).
	\end{enumerate}
\end{thm}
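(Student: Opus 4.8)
The plan is to build $P_d$ by polynomial approximation of a shifted, rescaled version of the smooth converse Lyapunov function $V$ provided by Cor.~\ref{cor: teel existsnece of smooth LF}. First I would fix the attractor set $A$, the compact set $\Omega$ with $A \subseteq \Omega^\circ \subset BOA_f(A)$, and invoke Cor.~\ref{cor: teel existsnece of smooth LF} to obtain $V \in C^\infty(BOA_f(A),\R)$ with $V \ge 0$, $V^{-1}(0) = A$, and $\nabla V(x)^T f(x) \le -V(x)$ on $BOA_f(A)$. Since $V$ vanishes exactly on $A$ and $\Omega/A^\circ$ is compact (here meaning $\Omega$ minus an open neighborhood of $A$), on the compact annular region $\partial\Omega$ we have $\min_{x\in\partial\Omega} V(x) =: 2m > 0$. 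The idea is then to set $W(x) := \sqrt{(V(x)+\gamma)/\gamma}$ for a small $\gamma>0$ to be chosen, which is $C^\infty$ on the compact set $\Omega$ (the shift by $\gamma$ removes the non-differentiability of the square root at the zeros of $V$), approximate $W$ uniformly on $\Omega$ together with its gradient by a polynomial $q_d$ (Stone–Weierstrass for the $C^1$ norm on a compact set, which holds since $W\in C^1(\Omega)$ — I would state this as a lemma or cite it), and then set $P_d := q_d^2$, which is automatically SOS of degree $2\lfloor \deg q_d\rfloor$, and reindex so $P_d \in \sum_{SOS}^d$.

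Next I would verify the four conclusions. For item (1), note that $W$ satisfies $\nabla W(x)^T f(x) = \frac{1}{2\gamma W(x)}\nabla V(x)^T f(x) \le \frac{-V(x)}{2\gamma W(x)} = \frac{-(\gamma W(x)^2 - \gamma)}{2\gamma W(x)} = -\frac12\big(W(x) - \tfrac{1}{W(x)}\big)$, and since $W \ge 1$ on $\Omega$ this gives a strict decrease estimate with a quantitative gap bounded away from zero wherever $W$ is bounded away from $1$, i.e. away from $A$. Then $\nabla P_d^T f = 2 q_d \nabla q_d^T f$, and using $\|q_d - W\|_{C^1(\Omega)} \to 0$ one shows $\nabla P_d(x)^T f(x) + (P_d(x) - 1) \to 2 W(x)\nabla W(x)^T f(x) + W(x)^2 - 1$ uniformly on $\Omega$; I would show the limiting expression is $\le 0$ and strengthen to strict negativity by choosing $\gamma$ small (this forces $W^2 = (V+\gamma)/\gamma$ to blow up off $A$, so $W^2-1$ is large and negative only near... ) — more carefully, the cleanest route is: $2W\nabla W^T f + W^2 - 1 \le -W(W - 1/W) + W^2 - 1 = -W^2 + 1 + W^2 - 1 = 0$, with equality only where $\nabla V^T f = -V$ holds with equality AND... no: actually $-W(W-1/W) + W^2-1 = 0$ identically, so I instead keep the true bound $2W\nabla W^T f \le \frac{-V}{\gamma W}\cdot W\cdot$ hmm — I will use the sharper inequality $\nabla V^T f \le -V$ to get $2W\nabla W^T f + W^2 - 1 = \frac{\nabla V^T f}{\gamma} + W^2 - 1 \le \frac{-V}{\gamma} + \frac{V+\gamma}{\gamma} - 1 = 0$; strictness then comes because away from $A$ one has $\nabla V^T f < 0$ on the compact set while $V > 0$ — actually equality $\frac{-V}{\gamma}+\frac{V}{\gamma} = 0$ is exact, so I need the \emph{strict} version of Cor.~\ref{cor: teel existsnece of smooth LF}'s inequality, or I perturb: replace $V$ by a slightly smaller multiple, or use $\nabla V^T f \le -V$ together with the observation that on $A$ itself $\nabla P_d^T f = 0 = -(P_d-1)$ forces care — I expect the right fix is to approximate $W$ by $q_d$ with $\|q_d - W\|_{C^1} < \delta_d \to 0$ and note the defect is $O(\delta_d)$ on $\Omega$ while the true continuous function $2W\nabla W^T f + W^2 - 1 \le 0$; to get \emph{strict} inequality everywhere including on $A$, I would instead target $W_\lambda := \sqrt{(V+\gamma)/(\lambda\gamma)}$ with $\lambda > 1$ so the limiting defect is $\frac{-V+V/\lambda}{\lambda\gamma}+ \cdots < 0$ on all of $\Omega$, strictly, and then $\delta_d$-closeness preserves strictness for $d \ge N$.

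For item (2): on $\partial\Omega$, $V \ge 2m$, so $W_\lambda(x) = \sqrt{(V(x)+\gamma)/(\lambda\gamma)} \ge \sqrt{(2m+\gamma)/(\lambda\gamma)}$, which exceeds $\sqrt{1 + 2\alpha'}$ for a fixed $\alpha' > 0$ once $\gamma$ is small; squaring and transferring the $C^0$-closeness of $q_d$ to $W_\lambda$ gives $P_d(x) = q_d(x)^2 > 1 + \alpha$ on $\partial\Omega$ for a suitable $\alpha>0$ and $d \ge N$. For item (3): on $A$, $V = 0$, so $W_\lambda = \sqrt{1/\lambda} < 1$, hence $W_\lambda^2 = 1/\lambda < 1$ strictly, and $C^0$-closeness gives $P_d(x) = q_d(x)^2 \le 1$ on $A$ for $d \ge N$, i.e. $A \subseteq \{x\in\Omega : P_d(x)\le 1\}$. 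For item (4): the superlevel-set structure — as $d\to\infty$, $P_d \to W_\lambda^2$ uniformly on $\Omega$, and $\{x\in\Omega : W_\lambda(x)^2 \le 1\} = \{x\in\Omega : V(x) \le (\lambda-1)\gamma\}$, which shrinks to $A$ as $\gamma \to 0$; but $\gamma$, $\lambda$ are fixed per construction, so to get convergence I should let them depend on $d$: choose $\gamma_d \to 0$, $\lambda_d \to 1$ slowly enough that the $C^1$-approximation is still achievable at degree $d$ (the modulus of continuity of $W_{\lambda_d,\gamma_d}$ degrades as $\gamma_d\to 0$, but for each fixed $d$-stage we only need \emph{some} polynomial close enough, and then diagonalize). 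Then $\{x\in\Omega: P_d \le 1\}$ is sandwiched between $A$ and $\{x\in\Omega: V \le (\lambda_d-1)\gamma_d + o(1)\}$, and by continuity of the Lebesgue measure along the nested family $\{V \le t\}\downarrow A$ (using that $A = \bigcap_{t>0}\{V\le t\}$ and $\mu(\{V\le t\}) \to \mu(A)$, plus $A \subseteq$ all of them) one gets $D_V(A, \{x\in\Omega: P_d\le1\}) \to 0$.

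The main obstacle is the interplay in the quantifiers: I need a single sequence $P_d$ that simultaneously (a) is a polynomial of bounded degree at stage $d$, (b) satisfies the \emph{strict} Lyapunov inequality on all of $\Omega$ (including on $A$, where the "natural" limiting inequality is only non-strict), and (c) has $1$-sublevel set converging to $A$. The strictness on $A$ is the subtle point — it is handled by the $\lambda$-slack device above (making the continuous limit function strictly Lyapunov-decreasing with a uniform gap), and the convergence of sublevel sets is handled by sending the slack parameters to their limits slowly (diagonalization), so that at each finite $d$ the $C^1$ polynomial approximation is still good enough to preserve strictness while the target sublevel set $\{V \le (\lambda_d-1)\gamma_d\}$ collapses onto $A$. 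Making these two requirements compatible — good enough approximation versus collapsing target — is where the careful bookkeeping lies; everything else (compactness arguments for $\min_{\partial\Omega}V > 0$, continuity of measure on nested sets, the chain-rule computations for $\nabla(q_d^2)^T f$) is routine.
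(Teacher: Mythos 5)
Your proposal is essentially correct, but it follows a genuinely different route from the paper. You introduce the strictness slack into the \emph{target} function itself, replacing $\sqrt{(W+\gamma)/\gamma}$ by $W_\lambda=\sqrt{(W+\gamma)/(\lambda\gamma)}$ with $\lambda>1$ so that the limiting Lyapunov defect is uniformly $\tfrac{1}{\lambda}-1<0$, square the polynomial approximant directly ($P_d=q_d^2$), and then let $\gamma_d\to 0$, $\lambda_d\to 1$ along a diagonal sequence so that the limiting $1$-sublevel sets $\{W\le(\lambda_d-1)\gamma_d\}$ collapse onto $A$, concluding item (4) by monotone continuity of Lebesgue measure along the nested family $\{W\le t\}\downarrow A$. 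The paper instead fixes $\gamma$ once and for all (and in fact takes it \emph{large}, $\gamma>\tfrac{M_1 C}{2}$), and obtains all the slack from shifting the approximating polynomial before squaring, $G_d=(R_d-\sigma)^2$: the cross term $-2\sigma\sqrt{\gamma}$ simultaneously yields the strict inequality, the boundary margin $\alpha$, and the one-sided bound $G_d\le J$, and since $\{x\in\Omega: J(x)/\gamma\le 1\}$ is \emph{exactly} $A$ (because the converse Lyapunov function vanishes precisely on $A$), no parameter-collapsing is needed; volume convergence then follows from $L^1$ convergence plus the one-sided bound via Prop.~\ref{prop: close in L1 implies close in V norm}. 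Your route buys a more elementary measure-theoretic ending (continuity of measure rather than the cited $L^1$-to-$D_V$ proposition) at the cost of $d$-dependent targets whose $C^1$ norms blow up as $\gamma_d\to 0$, so the quantifier bookkeeping (choosing the approximation accuracy after fixing $\gamma_d,\lambda_d$, then reindexing/filling in degrees so that $P_d\in\sum_{SOS}^d$ and properties (1)--(3) hold for all $d\ge N$ with a single fixed $\alpha$) must be done carefully, exactly as you acknowledge; the paper's fixed-target construction avoids that interplay entirely. One small caution: your fixed $\alpha$ in item (2) survives the limit $\gamma_d\to 0$ only because the boundary values $\left(\min_{\partial\Omega}W+\gamma_d\right)/(\lambda_d\gamma_d)$ blow up, which you should state explicitly when you send the parameters to their limits.
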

\begin{proof}
	Let us suppose $A \subset \R^n$ is an attractor set to the ODE~\eqref{eqn: ODE}. By Cor.~\ref{cor: teel existsnece of smooth LF} there exists $W\in C^\infty(BOA_f(A), \R)$ such that
\begin{enumerate}
	\item $W(x) \ge 0$ for all $x \in BOA_f(A)$ and $W(x) =0$ if and only if $x \in A$.
	\item $\nabla W(x) ^T f(x) \le - W(x)$ for all $x \in BOA_f(A)$.
\end{enumerate}

{We next split the remainder of the proof of Theorem~\ref{thm: existence of sos LF} into the following parts. In Part 1 we perturb $W$ by a positive constant $\gamma>0$, defining $J(x):=W(x)=\gamma$, and approximate $H(x):=\sqrt{J(x)}$ by a $d$-degree polynomial function, $R_d$. In Part 2 we show that for a suitable correction term, $\sigma>0$, that the SOS polynomial, $G_d(x):=(R_d(x)-\sigma)^2$, can be made arbitrarily close to $J$ and satisfies a Lyapunov type inequality similar to Eq.~\eqref{eqn: LF ineq}. Finally, in Part 3, we show that the SOS function $P_d(x):=\frac{G_d(x)}{\gamma}$ satisfies the statement of Theorem~\ref{thm: existence of sos LF} .}

\underline{\textbf{Part 1 of the proof:}} Let $\gamma >0$ and consider $J(x):= W(x) + \gamma$. It trivially follows that since $A \subseteq \Omega \subset BOA_f(A)$ we have
\begin{align} \label{pfeq: J}
&\nabla J(x)^T f(x) \le -(J(x) - \gamma) \text{ for all } x \in BOA_f(A).\\ \label{pfeq: A = level set of J}
& A=\{x \in \Omega: J(x) \le \gamma \}.
\end{align}

Since $W(x) \ge 0$ it follows that  $J(x) \ge \gamma >0$. Therefore $H(x):=+\sqrt{J(x)}$ is differentiable. That is $H \in C^1(\R^n, \R)$, since the function $g(x):=\sqrt{x}$ is differentiable over $(0, \infty)$ and $J$ maps onto $(\gamma, \infty) \subset (0, \infty)$. Using the fact $H$ is differentiable and applying the chain rule we find that
\begin{align} \nonumber
||\nabla H(x)||_2 & = ||\nabla \sqrt{W(x) + \gamma}||_2 = \frac{1}{2\sqrt{W(x) + \gamma}} ||\nabla W(x)||_2\\ \label{pfeq: M3 bound}
& \le \frac{1}{2\sqrt{  \gamma}}||\nabla W(x)||_2 \le \frac{C}{2\sqrt{\gamma}} \text{ for all } x \in \Omega,
\end{align}
where $C:=\sup_{x \in \Omega}||\nabla W(x)||_2$. Note that the first inequality in Eq.~\eqref{pfeq: M3 bound} follows since $W(x) \ge 0$ for all $x \in \Omega$ implies $\frac{1}{\sqrt{W(x) + \gamma}} \le \frac{1}{\sqrt{\gamma}}$ for all $x \in \Omega$. 

Moreover, applying the chain rule, the inequality in Eq.~\eqref{pfeq: J}, and the fact that $\frac{1}{2 \sqrt{J(x)}} \ge 0$ for all $x \in \Omega$ we find that,
\begin{align*}
\nabla H(x)^T f(x) & = \nabla \sqrt{J(x)}^T f(x)= \frac{1}{2\sqrt{J(x)}}\nabla {J(x)}^T f(x)\\
& \le \frac{-1}{2\sqrt{J(x)}} (J(x) - \gamma) \text{ for all } x \in \Omega.
\end{align*}
It now follows that $H$ satisfies
\begin{align} \label{pfeq: H PDE}
2H(x)\nabla H(x)^T f(x) & \le - (H^2(x) - \gamma) \text{ for all } x \in \Omega.
\end{align}

We next approximate $H$ by a polynomial. Because in Part 2 of the proof we use this polynomial approximation of $H$ to construct an SOS polynomial approximation of $H(x)^2:=J(x)$, we require specific error bounds in our polynomial approximation of $H$. Specifically, let
 \vspace{-0.25cm} \begin{align} \label{pfeq:gamma}
\gamma> \frac{M_1C}{2}>0,
 \end{align}
where $M_1:=\sup_{x \in \Omega}||f(x)||_2$ and recalling $C:=\sup_{x \in \Omega}||\nabla W(x)||_2$. Note that $\gamma$ from Eq.~\eqref{pfeq:gamma} is a constant that only depends on the problems data ($f$ and $\Omega$).

 Also let,
 \vspace{-0.25cm} \begin{align}
& \eps>0,  \\ \label{pfeq:alpha}
& 0<\alpha< \frac{1}{\gamma}\min_{x \in \partial \Omega} W(x) \\ \label{pfeq:theta}
& 0<\theta< \min \left\{ \eps, (\mu(\Omega) +1)(\min_{x \in \partial \Omega} W(x) - \gamma \alpha)  \right\} \\ \label{pfeq: eps}
&0< \delta < \min \bigg\{\frac{ \sqrt{\gamma} -  M_1 M_3}{ M_1 M_2 + M_1 M_3 + M_2}, \frac{\sqrt{\gamma}}{M_2}  \bigg\},\\ \label{pfeq: sigma}
&0< \sigma < \min \bigg\{ \frac{2( \sqrt{\gamma} -  (M_1 M_2 + M_1 M_3 + M_2) \delta - M_1 M_3)}{(2M_1 +1 )\delta^2 + 2(1+M_1)\delta + 1  },\\ \nonumber
& \frac{2 (\sqrt{\gamma} \hspace{-0.05cm} - \hspace{-0.05cm} M_2\delta)}{(\delta+1)^2}, \frac{\sqrt{\theta}}{\sqrt{2(\mu(\Omega) \hspace{-0.05cm} + \hspace{-0.05cm} 1)}(\delta \hspace{-0.05cm} + \hspace{-0.05cm} 1)}, \frac{\theta}{4M_2(\delta \hspace{-0.05cm}+ \hspace{-0.05cm} 1)(\mu(\Omega) \hspace{-0.05cm} + \hspace{-0.05cm}1)}  \bigg\},
\end{align}
recalling $M_1:=\sup_{x \in \Omega}||f(x)||_2\ge0$ and where $M_2:=\sup_{x \in \Omega}|H(x)|\ge0$, and $M_3:=\sup_{x \in \Omega}||\nabla H(x)||_2\ge0$. Note that $\alpha>0$ since $\gamma>0$ and $\min_{\delta \in \partial \Omega} W(x) >0$ (since $A \subseteq \Omega ^\circ$ implies $A \cap \partial \Omega = \emptyset$ and $W(x)= 0$ iff $x \in A$). Also note that $\theta>0$ since $\eps>0$ and $\min_{x \in \partial \Omega} W(x) - \gamma \alpha$ by Eq.~\eqref{pfeq:alpha}. Moreover, $\delta>0$ since $\gamma> \frac{M_1C}{2}$ (by Eq.~\eqref{pfeq:gamma}) and $M_3 \le \frac{C}{2 \sqrt{\gamma}}$ (by Eq.~\eqref{pfeq: M3 bound}) implying that $ \sqrt{\gamma} - M_1 M_3>0$. Furthermore, $\sigma>0$ since $\delta<\frac{ \sqrt{\gamma} -  M_1 M_3}{ M_1 M_2 + M_1 M_3 + M_2}$ implying $2( \sqrt{\gamma} -  (M_1 M_2 + M_1 M_3 + M_2) \delta - M_1 M_3)>0$ and $\delta< \frac{\sqrt{\gamma}}{M_2}$ implying $2 (\sqrt{\gamma} -  M_2 \delta)>0$.

 Now, by Theorem~\ref{thm:Nachbin} there exists polynomials $\{R_d\}_{d \in \N} \subset \mcl P(\R^n, \R)$ and $N \in \N$ such that
\vspace{-0.25cm} \begin{align} \label{pfeq: H-R< eps sigma}
 |H(x) - R_d(x)| < \delta \sigma \text{ for all } d \ge N.\\ \label{pfeq: nabla H-R< eps sigma}
||\nabla H(x) - \nabla R_d(x)||_2 < \delta \sigma \text{ for all } d \ge N.
\end{align}

\underline{\textbf{Part 2 of the proof:}} In this part of the proof we show that there exists $\gamma>0$, $\alpha>0$, $N \in \N$, and $\{G_d\}_{d \in \N} \subset \sum_{SOS}$ such that
\begin{align} \label{pfeq: P is LF}
	& \nabla {G}_d(x) ^T f(x) < -  ({G}_d(x) - \gamma) \text{ for all } x \in \Omega \text{ and } d > N.\\ \label{pfeq: less than J}
	&{G}_d(x) \le J(x) \text{ for all } x \in \Omega \text{ and } d \ge N.\\  \label{pfeq: greater than gamma on boundary}
	& G_d(x) \ge \gamma(1+ \alpha) \text{ for all } x \in \partial \Omega \text{ and } d \ge N \\ \label{pfeq: convergence in L1}
	& \lim_{d \to \infty} ||{G}_d - J||_{L^1(\Omega, \R)}=0.
\end{align}


Before we proceed we note that showing Eq.~\eqref{pfeq: convergence in L1} is equivalent to showing that there exists $\{G_d\}_{d \in \N} \subset \sum_{SOS}$ such that for any $\eps>0$ there exists $N \in \N$ for which the following holds
\vspace{-0.25cm}
\begin{align} \label{pfeq:P close in L1 to J}
	||G_d-J||_{L^1(\Omega, \R)}< \eps \text{ for all } d \ge N.
\end{align}

Now, since $R_d \in \mcl P(\R^n, \R)$ (defined by Eqs~\eqref{pfeq: H-R< eps sigma} and~\eqref{pfeq: nabla H-R< eps sigma}) it follows that $G_d(x):=(R_d(x)- \sigma)^2$ is a SOS polynomial, that is $ G_d \in \sum_{SOS}$ for all $d \in \N$. 

We next show that $G_d$ satisfies Eq.~\eqref{pfeq: P is LF}. Recalling $\gamma> \frac{M_1C}{2}$ (by Eq.~\eqref{pfeq:gamma}), $M_1:=\sup_{x \in \Omega} ||f(x)||_2$ and $C:= \sup_{x \in \Omega} ||\nabla W(x)||_2$, it follows that
 \vspace{-0.1cm} \begin{align} \nonumber
&\nabla G_d(x)^T f(x) + (G_d(x) - \gamma) \\ \nonumber
& = (\nabla(R_d(x) - \sigma)^2)^T f(x)  + ((R_d(x) - \sigma)^2 - \gamma)\\ \nonumber
& =  2 (R_d(x) -\sigma) \nabla R_d(x)^T f(x) + (R_d^2(x) - \gamma) - 2\sigma R_d(x) + \sigma^2\\ \nonumber
& \le  2 R_d(x) \nabla R_d(x)^T \hspace{-0.1cm} f(x) - 2 H(x) \nabla H(x)^T \hspace{-0.1cm} f(x)  + (R_d^2(x) - H^2(x)) \\ \nonumber
& \hspace{1cm} - 2 \sigma  R_d(x) + \sigma^2 -2\sigma \nabla R_d(x)^T f(x) \\ \nonumber
& = 2(R_d(x) - H(x)) \nabla R_d(x)^T \hspace{-0.1cm} f(x)  + 2H(x)\nabla (R_d  -  H)(x) ^T \hspace{-0.1cm} f(x)\\ \nonumber
& \qquad  + (R_d(x) - H(x))(R_d(x) + H(x)) + 2 \sigma (H(x) - R_d(x)) \\ \nonumber
& \qquad - 2\sigma\nabla (R_d  -  H)(x) ^Tf(x) -2 \sigma \nabla H(x)^Tf(x) \\ \nonumber
& \qquad - 2\sigma H(x) + \sigma^2 \\ \nonumber
& \le 2 |R_d(x) - H(x)|||\nabla R_d(x)||_2||f(x)||_2 \\ \nonumber
& \qquad + 2H(x) ||\nabla R_d(x) - \nabla H(x)||_2 ||f(x)||_2 \\ \nonumber
& \qquad + |R_d(x) - H(x)|(|R_d(x)| + H(x)) + 2 \sigma |H(x) - R_d(x)| \\ \nonumber
 & \qquad + 2 \sigma ||\nabla (R_d  -  H)(x) ||_2||f(x)||_2
+ 2 \sigma ||\nabla H(x) ||_2 ||f(x)||_2 \\ \nonumber
& \qquad - 2 \sigma \sqrt{\gamma} + \sigma^2\\ \nonumber
& \le 2 \delta \sigma M_1 (||\nabla (R_d  -  H)(x)||_2 + ||\nabla H(x)||_2 ) + 2 M_1M_2 \delta \sigma \\ \nonumber
& \qquad + \delta \sigma ( |R_d(x) - H(x)| + H(x) + M_2) + 2 \delta \sigma^2 + 2M_1 \delta \sigma^2 \\ \nonumber
& \qquad + 2 M_1 M_3 \sigma - 2 \sigma \sqrt{\gamma} + \sigma^2\\ \nonumber
& \le 2M_1 \delta^2 \sigma^2 + 2M_1 M_3 \delta \sigma + 2 M_1 M_2 \delta \sigma + \delta^2 \sigma^2 + 2M_2 \delta \sigma \\ \nonumber
& \qquad + 2 \delta \sigma^2 + 2 M_1 \delta \sigma^2 + 2 M_1 M_3 \sigma - 2 \sqrt{\gamma} \sigma + \sigma^2\\ \nonumber
& = \sigma\bigg( ((2M_1 +1 )\delta^2 + 2(1+M_1)\delta + 1 )\sigma \\ \nonumber
& \qquad +2(( M_1 M_2  + M_1 M_3  +M_2)\delta + M_1 M_3 - \sqrt{\gamma} ) \bigg) \\ \label{pfeq: P is a LF}
& <0 \text{ for all } x \in \Omega \text{ and } d \ge N.
\end{align}
Where all the equalities in Eq.~\eqref{pfeq: P is a LF} follow from rearranging terms or adding and subtracting terms. The first inequality in Eq.~\eqref{pfeq: P is a LF} follows by applying the inequality in Eq.~\eqref{pfeq: H PDE}. The second inequality in Eq.~\eqref{pfeq: P is a LF} follows by the triangle inequality and the Cauchy Swarz in inequality. The third and fourth inequalities in Eq.~\eqref{pfeq: P is a LF} follows by Eqs.~\eqref{pfeq: H-R< eps sigma} and~\eqref{pfeq: nabla H-R< eps sigma}. Finally, the last inequality (the fifth inequality) in Eq.~\eqref{pfeq: P is a LF} follows by Eq.~\eqref{pfeq: sigma}.

We now show $G_d$ satisfies Eq.~\eqref{pfeq: less than J}.
\begin{align} \nonumber
& G_d(x) - J(x)= (R_d(x) - \sigma)^2 - H(x)^2\\ \nonumber
& =R_d(x)^2 - 2 \sigma R_d(x) + \sigma^2 - H(x)^2\\ \nonumber
& = (R_d(x) - H(x))(R_d(x) + H(x)) + 2 \sigma (H(x) - R_d(x)) \\ \nonumber
& \qquad - 2 \sigma H(x) + \sigma^2\\ \nonumber
& \le \delta \sigma (\delta \sigma + 2M_2) + 2\delta \sigma^2 - 2 \sigma \sqrt{\gamma} + \sigma^2\\ \label{pfeq: P is less than J}
& = \sigma \bigg( (\delta^2 + 2 \delta + 1)\sigma + 2M_2 \delta - 2 \sqrt{\gamma}      \bigg) \\ \nonumber
&< 0 \text{ for all } x \in \Omega \text{ and } d \ge N.
\end{align}
Where the first inequality in Eq.~\eqref{pfeq: P is less than J} follows using Eq.~\eqref{pfeq: H-R< eps sigma} and the fact $H(x) \ge \sqrt{\gamma}$ for all $x \in \Omega$. The second inequality in Eq.~\eqref{pfeq: P is less than J} follows by Eq.~\eqref{pfeq: sigma} $\left(\sigma<\frac{2 \sqrt{\gamma} - 2M_2\delta}{(\delta+1)^2} \right)$.

We now show $G_d$ satisfies Eq.~\eqref{pfeq: greater than gamma on boundary}.
\begin{align} \nonumber
& J(x)-G_d(x) = J(x) -(R_d(x) - \sigma)^2 \\ \nonumber
& =H(x)^2 - R_d(x)^2 + 2 \sigma R_d(x) - \sigma^2 \\ \nonumber
& = (H(x) - R_d(x))(H(x) + R_d(x)) + 2 \sigma (R_d(x) -H(x)) \\ \nonumber
& \qquad + 2 \sigma H(x) - \sigma^2 \\ \nonumber
& \le \delta \sigma (2 M_2 + \delta \sigma) + 2\delta \sigma^2 + 2 \sigma M_2 - \sigma^2\\  \nonumber
& =  (\delta^2 + 2 \delta -1) \sigma^2 +2M_2 (\delta+1) \sigma \\ \nonumber
& \le (\delta^2 + 2 \delta +1) \sigma^2 +2M_2 (\delta+1) \sigma\\
& < \frac{\theta}{\mu(\Omega)+1} \text{ for all } x \in \Omega \text{ and } d \ge N. \label{pfeq: J is less than P}
\end{align}
Where the first inequality in Eq.~\eqref{pfeq: J is less than P} follows using Eq.~\eqref{pfeq: H-R< eps sigma}. The second inequality in Eq.~\eqref{pfeq: J is less than P} follows by $\sigma>0$ and $-1<1$. The third inequality in Eq.~\eqref{pfeq: J is less than P} follows by Eq.~\eqref{pfeq: sigma} ($\sigma<\frac{\sqrt{\theta}}{\sqrt{2 (\mu(\Omega)+1)}(\delta+1)}$ and $\sigma< \frac{\theta}{4M_2(\delta+1)(\mu(\Omega)+1)}$). Now by rearranging Eq.~\eqref{pfeq: J is less than P} and using the fact that $J(x):= W(x) + \gamma$ we have that,
\begin{align} \label{pfeq: G greater than gam on boundary}
G_d(x) & > J(x) - \frac{\theta}{\mu(\Omega)+1}\\ \nonumber
& =W(x) + \gamma - \frac{\theta}{\mu(\Omega)+1} \\ \nonumber
& =\gamma +  \frac{ (\mu(\Omega)+1)(\min_{x \in \partial \Omega}W(x)) - \theta}{\mu(\Omega)+1} \\ \nonumber
& > \gamma(1+ \alpha) \text{ for all } x \in \partial \Omega \text{ and } d \ge N.
\end{align}
Where the first inequality in Eq.~\eqref{pfeq: G greater than gam on boundary} follows by Eq.~\eqref{pfeq: J is less than P} and the second inequality follows by Eq.~\eqref{pfeq:theta}. Hence Eq.~\eqref{pfeq: G greater than gam on boundary} shows Eq.~\eqref{pfeq: greater than gamma on boundary} holds.

We now show $G_d$ satisfies Eq.~\eqref{pfeq: convergence in L1} by showing $G_d$ satisfies Eq.~\eqref{pfeq:P close in L1 to J}. By Eqs.~\eqref{pfeq: P is less than J} and~\eqref{pfeq: J is less than P} and the fact that $\theta< \eps$ (Eq.~\eqref{pfeq:theta}), it follows that
\begin{align}
|G_d(x) -J(x)|< \frac{\eps}{\mu(\Omega)+1} \text{ for all } x \in \Omega \text{ and } d \ge N,
\end{align}
and thus
\begin{align*}
||G_d - J||_{L^1(\Omega, \R)} \le \sup_{x \in \Omega}|G_d(x) -J(x)| \mu(\Omega) < \eps \text{ and } d \ge N.
\end{align*}
Therefore Eq.~\eqref{pfeq: convergence in L1} holds.

\underline{\textbf{Part 3 of the proof:}} Let $P_d(x):= \frac{G_d(x)}{\gamma}$. Recall that $\gamma>0$ from Eq.~\eqref{pfeq:gamma} is a constant that only depends on the problems data ($f$ and $\Omega$) and not $d \in \N$. Therefore, $\lim_{d \to \infty} P_d= \frac{1}{\gamma} \lim_{d \to \infty} G_d$. Moreover, it follows that $\{P_d\}_{d \in \N} \subset \sum_{SOS}$ since $\{G_d\}_{d \in \N} \subset \sum_{SOS}$ and $\gamma>0$ (by Eq.~\eqref{pfeq:gamma}). Furthermore, it follows by Eqs.~\eqref{pfeq: P is LF}, \eqref{pfeq: less than J}, \eqref{pfeq: greater than gamma on boundary}, and~\eqref{pfeq: convergence in L1} that
\begin{align} \label{pfeq: 2 P is LF}
& \nabla {P}_d(x) ^T f(x) < -  ({P}_d(x) - 1) \text{ for all } x \in \Omega \text{ and } d \ge N,\\ \label{pfeq: 2 less than J}
&{P}_d(x) \le \tilde{J}(x) \text{ for all } x \in \Omega \text{ and } d \ge N,\\ \label{pfeq: P greater than 1 on boundary}
& P_d(x)>1 + \alpha \text{ for all } x \in \partial \Omega \text{ and } d \ge N \\ \label{pfeq: 2 convergence in L1}
& \lim_{d \to \infty} ||{P}_d - \tilde{J}||_{L^1(\Omega, \R)}=0,
\end{align}
where $\tilde{J}(x)=\frac{J(x)}{\gamma}$.

We now argue that Theorem~\ref{thm: existence of sos LF} is proven. Eq.~\eqref{pfeq: A = level set of J} implies that $A=\{x \in \Omega: \tilde{J}(x)\le 1 \}$. Then Eq.~\eqref{pfeq: 2 less than J} implies that $A \subseteq \{x \in \Omega: P_d(x) \le 1 \}$ for all $d \ge N$. Moreover, Eqs.~\eqref{pfeq: 2 less than J}~and~\eqref{pfeq: 2 convergence in L1} together with Theorem~\ref{prop: close in L1 implies close in V norm} (found in Appendix~\ref{sec: appendix 2}) imply that $\lim_{d \to \infty} D_V(\{x \in \Omega: \tilde{J}(x)\le 1 \}, \{x \in \Omega: P_d(x) \le 1 \})=0$, implying $\lim_{d \to \infty} D_V(A, \{x \in \Omega: P_d(x) \le 1 \})=0$ (since $A=\{x \in \Omega: \tilde{J}(x)\le 1 \}$).
\end{proof}

\begin{rem}
Theorem~\ref{thm: existence of sos LF} shows that for any attractor set, $A$, there exists an SOS polynomial, $P$, that satisfies the Lyapunov conditions (Eqs.~\eqref{eqn: LF ineq}, \eqref{eqn: S0 inside intererior} and~\eqref{eqn: S0 non-empty}) of Prop.~\ref{prop: LF implies attractor set} and that has a $1$-sublevel set arbitrarily close to the attractor set with respect to the volume metric. Note that $P$ satisfies Eq.~\eqref{eqn: LF ineq} directly from the statement of Theorem~\ref{thm: existence of sos LF}. Also note that $P$ satisfies Eq.~\eqref{eqn: S0 inside intererior} since by Theorem~\ref{thm: existence of sos LF} we have that $P(x)>1$ for all $\delta \Omega$. Also note that Eq.~~\eqref{eqn: S0 non-empty} since by Theorem~\ref{thm: existence of sos LF} we have that $A \subseteq \{x \in \Omega : P(x) \le 1\}$ and since $A \ne \emptyset$ it follows that $\{x \in \Omega : P(x) \le 1\} \ne \emptyset$.
\end{rem}

Theorem~\ref{thm: existence of sos LF} shows that the Lyapunov characterization of attractor sets proposed in Section~\ref{sec: LF} is not conservative and that this non conservatism is retained even if the Lyapunov functions are constrained to be SOS. However, in order to apply the results of Sections~\ref{sec: LF} and~\ref{sec:converse LF} to compute outer approximations of minimal attractors, we require an algorithm which can enforce the Lyapunov inequality conditions of Prop.~\ref{prop: LF implies attractor set} while minimizing the volume of the 1-sublevel set of the Lyapunov function. In the following section propose such an algorithm based on convex optimization and SOS programming.

\section{A Family of SOS Problems for Minimal Attractor Set Approximation} \label{sec: SOS problems for attractor set approx}
In Section~\ref{sec: LF}, we proposed a Lyapunov characterization of attractor sets for a given ODE defined by a vector field $f$. In Section~\ref{sec:converse LF}, we showed that this characterization is not conservative even if the Lyapunov functions are constrained to be SOS. Given these two results, we may now formulate a polynomial optimization characterization of the minimal attractor set $A^* \subset \Omega$ of a given ODE defined by a vector field, $f$. The following optimization problem enforces the Lyapunov conditions of Prop.\ref{prop: LF implies attractor set} while minimizing the distance between the minimal attractor $A^*$ and the $1$-sublevel set of the Lyapunov function:
\begin{align} \label{opt: approx minimal A}
&\inf_{J \in \mcl F } D_V(A^*,\{x \in \Omega: J(x) \le 1 \})\\ \nonumber
& \text{such that } \nabla J(x) ^T f(x) \le -  (J(x) - 1) \text{ for all } x \in \Omega,\\ \nonumber
& \hspace{1.5cm} \{x \in \Omega : J(x) \le 1\}  \subseteq \Omega^\circ,\\ \nonumber
& \hspace{1.5cm} \{x \in \Omega : J(x) \le 1\}  \ne \emptyset,
\end{align}
where $\mcl F$ is some set of functions which we may take to be the set of SOS polynomials.

In Subsection~\ref{subsec: SOS constraints}, we will propose a SOS programming approach to solving Optimization Problem~\eqref{opt: approx minimal A}. Specifically, in Subsection~\ref{subsec: SOS constraints}, we propose a sequence of quasi-SOS programming problems, each involving volume minimization, and whose limit yields the minimal attractor set of the ODE defined by $f$. However, the SOS constraints in Subsection~\ref{subsec: SOS constraints} do not enforce $\{x \in \Omega : J(x) \le 1\}  \ne \emptyset$ - thus reducing the computational complexity of the algorithm. We show that it is not necesary to enforce this constraint because as we will show next in Subsection~\ref{subsec:reduce opt}, by selecting $\Omega$ sufficiently large and enforcing $\nabla J(x) ^T f(x) \le -  (J(x) - 1) \text{ for all } x \in \Omega$ it follows that $J$ automatically satisfies $\{x \in \Omega : J(x) \le 1\}  \ne \emptyset$. Moreover, unlike Opt.~\eqref{opt: approx minimal A}, the objective function of our proposed quasi-SOS programming problem will not involve the unknown set $A^*$. This is because, as we will next show in Subsection~\ref{subsec:reduce opt}, for sufficiently large $\Omega$ and $J$ such that $\nabla J(x) ^T f(x) \le -  (J(x) - 1) \text{ for all } x \in \Omega$ it follows $A^* \subseteq \{x\in \Omega: J(x) \le 1  \}$ (the $1$-sublevel set of $J$ contains the minimal attractor set). We later use result in Subsection~\ref{subsec: SOS constraints} to eliminate $A^*$ from the objective function.
 
  Note in addition, we will show in Subsection~\ref{subsec:futher simp} that if sublevel set volume is minimized and $\Omega$ is sufficiently large, then we may likewise eliminate the constraint $\{x \in \Omega : J(x) \le 1\}  \subseteq \Omega^\circ$-- thus further reducing computational complexity of the SOS programming problem.


\subsection{A Reduced Form of Optimization Problem~\eqref{opt: approx minimal A}} \label{subsec:reduce opt}

In Prop.~\ref{prop: LF implies attractor set} we have proposed a Lyapunov characterization of attractor sets. We have shown that if $V$ satisfies Eqs.~\eqref{eqn: LF ineq}, \eqref{eqn: S0 inside intererior} and~\eqref{eqn: S0 non-empty} then the $1$-sublevel set of $V$ is an attractor set of the ODE defined by the vector field $f$. In Eq.~\eqref{opt: approx minimal A} we have proposed an optimization problem that searches over functions $J$ that satisfy Eqs.~\eqref{eqn: LF ineq}, \eqref{eqn: S0 inside intererior} and~\eqref{eqn: S0 non-empty} while minimizing the distance between the $1$-sublevel set of $J$ and the minimal attractor set of the ODE defined by $f$.

Later, in Subsection~\ref{subsec: SOS constraints} we will propose an SOS programming problem for solving Opt.~\eqref{opt: approx minimal A} that searches for a $J$ that satisfies Eqs.~\eqref{eqn: LF ineq} and \eqref{eqn: S0 inside intererior} while minimizing the volume of the $1$-sublevel set of $J$, but does not directly enforce Eq.~\eqref{eqn: S0 non-empty} - instead choosing $\Omega$ to be sufficiently large. Fortunately, as we will show next in Lemma~\ref{lem: S0 is nonempty} that if $\Omega$ is chosen sufficiently large such that $A \subseteq \Omega$, for some attractor set $A$ of the ODE, then any continuous $V$ satisfying Eq.~\eqref{eqn: LF ineq} automatically satisfies Eq.~\eqref{eqn: S0 non-empty}. Lemma~\ref{lem: S0 is nonempty} then shows that if $\Omega$ contains the minimal attractor and $V$ satisfies Eqs.~\eqref{eqn: LF ineq} and \eqref{eqn: S0 inside intererior}, then the 1-sublevel set of $V$ is an attractor set.  

\begin{lem} \label{lem: S0 is nonempty}
	Consider {$f \in C^1(\R^n, \R^n)$}. Suppose there exists {an attractor set} (Defn.~\ref{defn: attractor set}) $A \subset \R^n$ of the ODE~\eqref{eqn: ODE} defined by $f$, $V \in C^1(\R^n, [0,\infty))$, and a compact set $\Omega \subset \R^n$ such that
\begin{align} \label{1}
\nabla V(x)^T f(x) &\le - ( V(x) - 1)  \text{ for all } x \in \Omega,\\ \label{2}
A \subseteq \Omega,
\end{align}
then $\{x \in \Omega: V(x) \le 1 \} \ne \emptyset$.
\end{lem}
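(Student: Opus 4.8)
The plan is to exploit the forward invariance of the attractor set $A$ together with the exponential contraction that Eq.~\eqref{1} forces along any trajectory that remains inside $\Omega$. The key idea is to examine not an arbitrary trajectory but the one starting at a point where $V$ attains its minimum over $A$; this is what turns an asymptotic decay statement into an exact sublevel-set membership.

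First I would invoke compactness: since $A$ is nonempty and compact (Defn.~\ref{defn: attractor set}) and $V$ is continuous, the extreme value theorem yields $x^* \in A$ with $m := V(x^*) = \min_{x \in A} V(x)$. If $m \le 1$, then since $A \subseteq \Omega$ by Eq.~\eqref{2} we immediately get $x^* \in \{x \in \Omega : V(x) \le 1\}$, so this set is nonempty. It therefore remains only to rule out the case $m > 1$, which I would do by contradiction.

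Assume $m > 1$. By forward invariance (item~2 of Defn.~\ref{defn: attractor set}), $\phi_f(x^*,t) \in A \subseteq \Omega$ for all $t \ge 0$. Since $f \in C^1$, Lemma~\ref{lem: diff soln map} gives $\phi_f \in C^1$, so $g(t) := V(\phi_f(x^*,t)) - 1$ is $C^1$ with $g'(t) = \nabla V(\phi_f(x^*,t))^T f(\phi_f(x^*,t))$; because the trajectory stays in $\Omega$, Eq.~\eqref{1} applies at each $t$ and gives $g'(t) \le -g(t)$. Gronwall's inequality (Lem.~\ref{lem: gronwall}), or simply integrating $\frac{d}{dt}(e^t g(t)) \le 0$, then yields $g(t) \le e^{-t} g(0) = e^{-t}(m-1)$ for all $t \ge 0$. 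As $m - 1 > 0$, for any fixed $t > 0$ we obtain $V(\phi_f(x^*,t)) \le 1 + e^{-t}(m-1) < m$, which contradicts $\phi_f(x^*,t) \in A$ together with the definition of $m$. Hence $m \le 1$, and the conclusion follows.

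I do not anticipate a real technical obstacle; the one subtlety worth flagging is that starting from an arbitrary point of $A$ would only give $\limsup_{t\to\infty} V(\phi_f(x,t)) \le 1$, which does not by itself exhibit a point with $V \le 1$ — choosing the minimizer $x^*$ is precisely what forces the strict inequality $V(\phi_f(x^*,t)) < m$ and hence the contradiction. One should also note that the hypotheses (forward invariance, plus the paper's standing assumption on global existence of the solution map) guarantee $\phi_f(x^*,\cdot)$ is defined on all of $[0,\infty)$, so the argument is not vacuous.
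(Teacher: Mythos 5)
Your proof is correct and takes essentially the same route as the paper's: both arguments rest on forward invariance of $A$, Gronwall's inequality applied to Eq.~\eqref{1} along a trajectory that stays in $A\subseteq\Omega$, and the extreme value theorem on the compact set $A$. The only cosmetic difference is where the contradiction is extracted — you contradict minimality of $V$ over $A$ at the minimizer $x^*$ after a finite time, while the paper supposes the whole intersection is empty and contradicts boundedness of the continuous $V$ on $A$ via the reversed bound $c\,e^{t}\le V(y)-1$.
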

\begin{proof} In order to prove $\{x \in \Omega: V(x) \le 1 \} \ne \emptyset$ we show that $A \cap \{x \in \Omega: V(x) \le 1 \} \ne \emptyset$.

 Suppose for contradiction that $A \cap \{x \in \Omega: V(x) \le 1 \}=\emptyset$. Then $V(y)>1$ for all $y \in A$. Since $A \subseteq \Omega$ (by Eq.~\eqref{2}) is an attractor set it is an invariant set. Therefore, $\phi_f(y,t) \in A \subseteq \Omega$ for all $t \ge 0$ and thus by Eq.~\eqref{1} it follows that
\begin{align*}
\frac{d}{dt} V(\phi_f(y,t)) \le - &  (V(\phi_f(y,t)) - 1) \text{ for all } (y,t) \in A \times [0,\infty).
\end{align*}
Then, using Gronwall's inequality (Lem.~\ref{lem: gronwall}) we have that
\begin{align} \label{3}
V(\phi_f(y,t)) - 1 \le e^{- t} & ( V(y) - 1)  \text{ for all } (y,t) \in A \times [0,\infty).
\end{align}

Let $c:=\inf_{t \ge 0} \{V(\phi_f(y,t))-1 \}$. We will now argue that $c>0$. Using the fact that $\phi_f(y,t) \in A$ for all $t \ge 0$ it follows that $c=\inf_{t \ge 0} \{V(\phi_f(y,t))-1 \} \ge \inf_{z \in A} \{V(z)-1 \}$. Then, since $V$ is continuous and $A$ is compact it follows by the extreme value theorem that there exists $z^* \in A$ such that $V(z^*)-1=\inf_{z \in A} \{V(z)-1 \} \ge c$. Since we have assumed $A \cap \{x \in \Omega: V(x) \le 1 \}=\emptyset$ it follows that if $z^* \in A$ then $z^* \notin  \{x \in \Omega: V(x) \le 1 \}$ and hence $c \ge V(z^*)-1>0$.

Now, by Eq.~\eqref{3} and since $c>0$ it follows that $0<ce^t  \le V(y) -1$ for all $t \ge 0$ and $y \in A$ implying that $V$ is unbounded over $A$, contradicting the continuity of $V$. Therefore it follows that $A \cap \{x \in \Omega: V(x) \le 1 \} \ne \emptyset$ and hence $\{x \in \Omega: V(x) \le 1 \} \ne \emptyset$.
\end{proof}
Later in Subsection~\ref{subsec: SOS constraints} we will propose an optimization problem that has an objective function independent of the unknown set $A^*$ (unlike Opt.~\ref{opt: approx minimal A}). In order to formulate this optimization problem we require $A^* \subseteq \{x \in \Omega: V(x) \le 1 \}$. Next, we show that if $\Omega$ contains a neighborhood of the minimal attractor, $A^*$, and $V$ satisfies Eqs.~\eqref{eqn: LF ineq}, then the 1-sublevel set of $V$ contains the minimal attractor set.

\begin{lem} \label{lem: S0 contains minimal attractor}
	Consider $f \in C^1(\R^n, \R^n)$. Suppose $A^* \subset \R^n$ is the minimal attractor set (Defn.~\ref{defn: attractor set}) of the ODE~\eqref{eqn: ODE} defined by $f$, $V \in C^1(\R^n, [0,\infty))$, $\sigma>0$, and a compact set $\Omega \subset \R^n$ such that
	\begin{align} \label{Lf}
	\nabla V(x)^T f(x) &\le - ( V(x) - 1)  \text{ for all } x \in \Omega,\\ \label{subset}
	{B(A^*,\sigma) \subseteq \Omega},
	\end{align}
	then $A^* \subseteq \{x \in \Omega: V(x) \le 1 \}$.
\end{lem}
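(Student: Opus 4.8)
The plan is to argue by contradiction, much in the spirit of the proof of Lemma~\ref{lem: S0 is nonempty}, but exploiting the extra hypothesis that $B(A^*,\sigma)\subseteq\Omega$ so that trajectories starting \emph{near} $A^*$, not just on $A^*$, stay inside $\Omega$ for long enough. Suppose $A^* \nsubseteq \{x\in\Omega : V(x)\le 1\}$; then there is a point $y\in A^*$ with $V(y)>1$. Since $A^*$ is invariant we have $\phi_f(y,t)\in A^*\subseteq\Omega$ for all $t\ge 0$, so Eq.~\eqref{Lf} applies along the whole trajectory and Gronwall's inequality (Lem.~\ref{lem: gronwall}) gives $V(\phi_f(y,t))-1\le e^{-t}(V(y)-1)$ for all $t\ge 0$. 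This already forces $V(\phi_f(y,t))\to$ something $\le 1$; but I want a genuine contradiction with minimality, not just with continuity as in Lemma~\ref{lem: S0 is nonempty}. So instead I would examine the set of ``bad'' points of $A^*$.

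The key step is to consider $B^* := \{x\in A^* : V(x)\le 1\}$ — the part of $A^*$ already inside the $1$-sublevel set — or, if that set turns out to be empty or awkward, to build an attractor set strictly smaller than $A^*$ and contradict minimality. Concretely: since $V$ is continuous and satisfies the decay inequality on $\Omega\supseteq A^*$, the $1$-sublevel set restricted to a neighborhood of $A^*$ is forward invariant for trajectories that remain in $\Omega$ (this is exactly the invariance argument from the proof of the earlier Corollary, using the mean value theorem at a boundary crossing). Combined with the fact from the same earlier analysis that every trajectory starting in $B(A^*,\sigma)$ enters $\{V\le 1+\eta\}$ and stays there, one shows that $A^*\cap\{V\le 1\}$ is itself a nonempty (by Lemma~\ref{lem: S0 is nonempty}, applied with $\Omega$ here playing its role — note $A^*\subseteq B(A^*,\sigma)\subseteq\Omega$ so its hypotheses hold), compact, forward-invariant subset of $A^*$ that attracts a neighborhood of itself; hence it is an attractor set. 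If it were strictly contained in $A^*$ this would contradict the minimality of $A^*$; therefore $A^*\cap\{V\le 1\}=A^*$, i.e. $A^*\subseteq\{x\in\Omega:V(x)\le1\}$.

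The main obstacle, and the step I would spend the most care on, is verifying that $B^* = A^*\cap\{x\in\Omega : V(x)\le 1\}$ genuinely satisfies \emph{all} of conditions (1)--(4) of Definition~\ref{defn: attractor set}, so that ``$B^*$ is an attractor set'' is legitimate and minimality can be invoked. Compactness of $B^*$ is immediate (intersection of the compact $A^*$ with the closed sublevel set). Nonemptiness is Lemma~\ref{lem: S0 is nonempty}. Forward invariance needs the boundary-crossing argument: if a trajectory in $B^*$ left $\{V\le1\}$, at the first crossing time the derivative of $V\circ\phi_f$ would be $\ge 0$, yet Eq.~\eqref{Lf} (valid since the trajectory is in $A^*\subseteq\Omega$) gives $\dot V \le 1-V \le 0$, with strict inequality unless $V=1$ — a standard argument requiring a little care to make rigorous at the crossing. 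The local-attraction and Lyapunov-stability conditions (3)--(4) follow by combining the Gronwall estimate with the fact that $B(A^*,\sigma)\subseteq\Omega$ keeps nearby trajectories in the region where Eq.~\eqref{Lf} holds, pushing their $V$-values below $1+\eta$ and hence into $B(B^*,\eps)$; one must check distances to $B^*$ rather than to $A^*$, which is why having $B^* = A^*\cap\{V\le1\}$ (rather than some other candidate) is convenient. Once $B^*$ is confirmed to be an attractor set, the contradiction with minimality of $A^*$ closes the proof.
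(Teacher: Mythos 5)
Your proposal is correct and follows essentially the same route as the paper: the paper also forms $A^*\cap\{x\in\Omega:V(x)\le 1\}$, shows it is nonempty (via Lemma~\ref{lem: S0 is nonempty}), compact, forward invariant, and attracting for a neighbourhood (using $B(A^*,\sigma)\subseteq\Omega$ to keep nearby trajectories where Eq.~\eqref{Lf} holds, plus Gronwall to drive $V$ below $1+\eta$), and then invokes minimality of $A^*$. The only differences are cosmetic: the paper proves forward invariance directly from the Gronwall bound $V(\phi_f(y,t))-1\le e^{-t}(V(y)-1)$ rather than your mean-value/boundary-crossing argument (which, as you note, needs care exactly at $V=1$ and is most cleanly repaired by that same Gronwall estimate), and the care you flag about measuring distance to $B^*$ rather than to $A^*$ is indeed the delicate point of the final step.
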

\begin{proof}{
	To show $A^* \subseteq \{x \in \Omega: V(x) \le 1 \}$ we will show $A^* \cap \{x \in \Omega: V(x) \le 1 \}$ is an attractor set. Then if $A^* \nsubseteq \{x \in \Omega: V(x) \le 1 \}$ it follows that $A^* \cap \{x \in \Omega: V(x) \le 1 \} \subset A^* $, that is there exists an attractor set that is a strict subset of $A^*$, contradicting the fact that $A^*$ is the minimal attractor set.  }
	
	To show $A^* \cap \{x \in \Omega: V(x) \le 1 \}$ is an attractor set we will split the remainder of the proof into three parts, showing $A^* \cap \{x \in \Omega: V(x) \le 1 \}$ satisfies the three properties of attractor sets in Defn.~\ref{defn: attractor set}.
	
	\underline{\textbf{Proof $A^* \cap \{x \in \Omega: V(x) \le 1 \}$ is nonempty and compact:}} By the proof of Lemma~\ref{lem: S0 is nonempty} it follows that $A^* \cap \{x \in \Omega: V(x) \le 1 \} \ne \emptyset$. Moreover, since $A^*$ is compact and $\Omega$ is compact, implying $\{x \in \Omega: V(x) \le 1 \} \subseteq \Omega$ is compact, it follows $\{x \in \Omega: V(x) \le 1 \}$ is compact.
	
	\underline{\textbf{Proof $A^* \cap \{x \in \Omega: V(x) \le 1 \}$ is invariant:}} Let $y \in A^* \cap \{x \in \Omega: V(x) \le 1 \}$ then $y \in A^*$ and $y \in \{x \in \Omega: V(x) \le 1 \}$. Since $A^*$ is an attractor set it is invariant and therefore $\phi_f(y,t) \in A^*$ for all $t \ge 0$. In order to prove $A^* \cap \{x \in \Omega: V(x) \le 1 \}$ is invariant we must also show $\phi_f(y,t) \in \{x \in \Omega: V(x) \le 1 \}$ for all $t \ge 0$. For contradiction suppose there exists $T>0$ such that $\phi_f(y,t) \notin \{x \in \Omega: V(x) \le 1 \}$. That is, $V(\phi_f(y,T))>1$. 
	
	Using the fact $A^*$ is invariant and applying the Granwall Bellman Lemma to Eq.~\eqref{Lf} we get,
	\[V(\phi_f(y,t)) - 1 \le e^{- t}  ( V(y) - 1)  \text{ for all } (y,t) \in A^* \times [0,\infty).\]
Hence, if $V(\phi_f(y,T))>1$ we get that that $0<V(\phi_f(y,T)) - 1 \le e^T (V(y) -1)$ implying $V(y) >1$ contradicting the fact that $y \in \{x \in \Omega: V(x) \le 1 \}$. Thus we have shown that if $y \in A^* \cap \{x \in \Omega: V(x) \le 1 \}$ then $\phi_f(x,t) \in A^*$ for all $t \ge 0$ and $\phi_f(x,t) \in  \{x \in \Omega: V(x) \le 1 \}$ implying $\phi_f(x,t) \in A^* \cap \{x \in \Omega: V(x) \le 1 \}$ for all $t \ge 0$, proving $A^* \cap \{x \in \Omega: V(x) \le 1 \}$ is an invariant set.

{\textbf{\underline{Proof $A^* \cap \{x \in \Omega: V(x) \le 1 \}$ has an attracting} \underline{neighborhood:}}} We now show that for all $y \in A^* \cap \{x \in \Omega: V(x) \le 1 \}$ there exists $\delta>0$ such that for any $\eps>0$ there exists $T \ge 0$ for which
\begin{align} \label{10}
D(\phi_f(z,t),A^* \cap & \{x \in \Omega: V(x) \le 1 \})< \eps\\ \nonumber
& \text{ for all } z \in B(y, \delta) \text{ and } t \ge T.
\end{align}

Let $y \in A^* \cap \{x \in \Omega: V(x) \le 1 \}$ then $y \in A^*$. Since $A^*$ is an attractor set there exists $\delta>0$ such that for any $0<\eps<\sigma$ there exists $T_1 \ge 0$ for which
\begin{align} \label{close to A}
D(\phi_f(z,t),A^*)< \eps \text{ for all } z \in B(y, \delta) \text{ and } t \ge T_1.
\end{align}
Since $0<\eps<\sigma$ and $D(\phi_f(z,t),A^*)< \eps$ $\text{for all } (z,t) \in B(y, \delta) \times [T_1, \infty)$, it follows by Eq.~\eqref{subset} that
\begin{align} \label{11}
\phi_f(z,t) \in \Omega \text{ for all } (z,t) \in B(y, \delta) \times [T_1, \infty).
\end{align}

Next, we will consider the cases $\Omega / B(\{x \in \Omega : V(x) \le 1  \},\eps)= \emptyset$ and $\Omega / B(\{x \in \Omega : V(x) \le 1  \},\eps) \ne \emptyset$ separately showing Eq.~\eqref{10} holds for each case.

In the case $\Omega / B(\{x \in \Omega : V(x) \le 1  \},\eps)= \emptyset$ we get that $\Omega \subseteq B(\{x \in \Omega : V(x) \le 1  \},\eps)$, and hence using this fact together with Eq.~\eqref{11} it follows that,
\begin{align} \label{5}
D(\phi_f(z,t),\{x \in \Omega :& V(x) \le 1  \})< \eps \\ \nonumber
&\text{ for all } (z,t) \in B(y, \delta) \times [T_1, \infty ).
\end{align}
Now, Eqs.~\eqref{close to A} and~\eqref{5} imply
\begin{align} \label{8}
&D(\phi_f(z,t),A^* \cap \{x \in \Omega: V(x) \le 1 \}) \\ \nonumber
&\le  \max\{ D(\phi_f(z,t),A^*), D(\phi_f(z,t),\{x \in \Omega : V(x) \le 1  \}) \}< \eps \\ \nonumber
& \hspace{2cm} \text{ for all } (z,t) \in B(y, \delta) \times [T_1, \infty ).
\end{align}
Thus Eq.~\eqref{8} shows Eq.~\eqref{10} in the case $\Omega / B(\{x \in \Omega : V(x) \le 1  \},\eps)= \emptyset$.

Next let us consider the case $\Omega / B(\{x \in \Omega : V(x) \le 1  \},\eps) \ne \emptyset$. By Eqs.~\eqref{Lf} and~\eqref{11}, Gronwall's inequality (Lem.~\ref{lem: gronwall}), and the semi-group property of solution maps (Eq.~\eqref{soln map property}) we have that
\begin{align} \label{4}
V(\phi_f(z,T_1+t)) - 1 \le e^{- t} & ( V(\phi_f(z,T_1)) - 1) \le a e^{- t}\\ \nonumber
&   \text{ for all } (z,t) \in B(y, \delta) \times [0,\infty),
\end{align}
where $a:= \sup_{z \in B(y, \delta)} |V(\phi_f(z,T_1))-1| \ge 0$. Hence, it now follows for any $\eta>0$ that
\begin{align} \label{6}
\phi_f(z,t+T_1) & \in \{x \in \Omega : V(x) \le 1 + \eta \} \\ \nonumber
&\text{ for all } z \in B(y, \delta) \text{ and } t \ge \max \left\{0, \ln \left( \frac{a}{\eta} \right) \right\}. \end{align}

Let $T_2:=T_1+ \max \left\{0, \ln \left( \frac{a}{\eta} \right) \right\}$. We now construct $\eta>0$ such that $\{x \in \Omega : V(x) \le 1 + \eta \} \subseteq B(\{x \in \Omega : V(x) \le 1  \}, \eps)$. Then since $\phi_f(y,t) \in\{x \in \Omega : V(x) \le 1 + \eta \}$ for all $t \ge T_2$ (by Eq.~\eqref{6}), it follows that $D(\{x \in \Omega : V(x) \le 1  \},\phi_f(y,t) ) < \eps$ for all $t \ge T_2$.

Let $\eta \in  (0, b)$ where $b:=\inf_{ z \in \Omega / B( \{x \in \Omega : V(x) \le 1  \} ,\eps) }(V(z) - 1) $, where $\inf_{ z \in \Omega / B( \{x \in \Omega : V(x) \le 1  \} ,\eps) }V(z)$ exists since $\Omega / B(\{x \in \Omega : V(x) \le 1  \},\eps)$ is compact and $V$ is continuous. Note that $b>0$ since $\inf_{ z \in \Omega / B( \{x \in \Omega : V(x) \le 1  \} ,\eps) }V(z) - 1>0$ (because $\Omega / B(\{x \in \Omega : V(x) \le 1  \},\eps)$ is compact so by the extreme value theorem there exists $z^* \in \Omega / B(\{x \in \Omega : V(x) \le 1  \},\eps)$ such that $V(z^*)=\inf_{ z \in \Omega / B( \{x \in \Omega : V(x) \le 1  \},\eps) }V(z)$ and since $z^* \notin \{x \in \Omega : V(x) \le 1  \}$ it follows that $V(z^*)> 1$).

We now claim that $\{x \in \Omega : V(x) \le 1 + \eta \} \subseteq B(\{x \in \Omega : V(x) \le 1  \}, \eps)$. Suppose for contradiction that $\{x \in \Omega : V(x) \le 1 + \eta \} \nsubseteq B(\{x \in \Omega : V(x) \le 1  \}, \eps)$. Then there exists $w \in \{x \in \Omega : V(x) \le 1 + \eta \} \subseteq \Omega$ such that $w \notin B(\{x \in \Omega : V(x) \le 1  \}, \eps)$ implying $w \in \Omega / B( \{x \in \Omega : V(x) \le 1  \} , \eps)$. Now, $V(w) \le \eta + 1 < \inf_{ z \in \Omega / B(\{x \in \Omega : V(x) \le 1  \},\eps) }\{V(z)\} \le V(w)$ implying $0<0$, providing a contradiction. 

Therefore, taking $t \ge T_2$ it follows from Eq.~\eqref{6} that $\phi_f(z,t) \in \{x \in \Omega : V(x) \le 1 + \eta \} \subseteq B(\{x \in \Omega : V(x) \le 1  \}, \eps)$ for all $(z,t) \in B(y, \delta) \times [T_2, \infty )$ implying,
\begin{align} \label{7}
D(\phi_f(z,t),\{x \in \Omega :& V(x) \le 1  \})< \eps \\ \nonumber
&\text{ for all } (z,t) \in B(y, \delta) \times [T_2, \infty ).
\end{align}
Now, Eqs.~\eqref{close to A} and~\eqref{7} it follows that
\begin{align} \label{9}
&D(\phi_f(z,t),A^* \cap \{x \in \Omega: V(x) \le 1 \}) \\ \nonumber
&\le  \max\{ D(\phi_f(z,t),A^*), D(\phi_f(z,t),\{x \in \Omega : V(x) \le 1  \}) \}< \eps \\ \nonumber
& \hspace{2cm} \text{ for all } (z,t) \in B(y, \delta) \times [T_2, \infty ).
\end{align}
Therefore Eqs.~\eqref{8} and~\eqref{9} prove Eq.~\eqref{10}.

\end{proof}
 We now propose an SOS optimization problem for enforcing the constraints of Optimization Problem~\eqref{opt: approx minimal A}.

\subsection{An SOS Representation of the Lyapunov Inequality Constraint} \label{subsec: SOS constraints}
Suppose $A^* \subset \R^n$ is the minimal attractor of some ODE~\eqref{eqn: ODE} (defined by the vector field $f:\R^n \to \R^n$) and $\Omega \subset BOA_f(A^*)$ is some compact set such that $B(A^*,\sigma) \subseteq \Omega^\circ$, for some $\sigma>0$. Let us consider the problem of approximating the minimal attractor $A^*$ by some set $A$ that can be certified as an attractor set (but not necessarily the minimal attractor set). One way to approach this problem is by solving Opt.~\eqref{opt: approx minimal A}, since any feasible solution, $J$, to Opt.~\eqref{opt: approx minimal A} satisfies the Lyapunov conditions of Prop.~\ref{prop: LF implies attractor set}, and hence, $A:=\{x \in \Omega: J(x) \le 1 \}$ is an attractor set to the ODE defined by a vector field, $f$.

We now consider how to enforce the conditions of Opt.~\eqref{opt: approx minimal A} using SOS optimization. Fortunately it is not necessary to enforce the constraint $\{x \in \Omega: J(x) \le 1\} \ne \emptyset$ since Lem.~\ref{lem: S0 is nonempty} shows that $J$ automatically satisfies this constraint when $A^* \subseteq \Omega$. We next propose a SOS tightening of the remaining constraints of Opt.~\eqref{opt: approx minimal A}, taking $\Omega$ to have the form $\Omega= \{ x \in \R^n: g_\Omega(x) \ge 0\}$ {with} $\partial \Omega = \{ x \in \R^n: g_\Omega(x) = 0\}$.

For some $\alpha>0$ we now consider the following optimization problem,
\begin{align} \label{opt: approx minimal A 2}
&\inf_{J \in  \sum^d_{SOS}} D_V(A^*,\{x \in \Omega: J(x) \le 1 \})\\ \nonumber
& \text{such that }   J,s_0,k_0, k_1 \in \sum^d_{SOS}, \quad p_0 \in {\mcl P_d(\R^n, \R)},  \\ \nonumber
& \text{where } k_0(x)=-\nabla J(x) ^T f(x) -  (J(x) - 1) - s_0(x) g_\Omega(x),\\ \nonumber
& k_1(x) = (J(x)-1 - \alpha) - p_0(x) g_\Omega(x).
\end{align}


The problem with solving Opt.~\eqref{opt: approx minimal A 2} in its current form is that evaluating the objective function requires knowledge of the minimal attractor set, $A^*$ (which is unknown). Fortunately, however, we can formulate an optimization problem which is equivalent to Opt.~\eqref{opt: approx minimal A 2}, but with an objective function that does not depend on the unknown minimal attractor set, $A^*$.

If $B(A^*,\sigma) \subseteq \Omega^\circ$, for some $\sigma>0$ and $J$ is feasible to Opt.~\eqref{opt: approx minimal A 2}, then Corollary~\ref{cor: convergence of SOS to attrcator} shows that minimizing $D_V(A^*,\{x \in \Omega: J(x) \le 1 \})$ is equivalent to minimizing $\mu(\{x \in \Omega: J(x) \le 1 \})$. Roughly speaking, if $J$ is feasible to Opt.~\eqref{opt: approx minimal A 2} then $J$ satisfies the constraints of Opt.~\eqref{opt: approx minimal A 2}. Hence $\nabla J(x)^Tf(x) \le -(\nabla J(x) -1) \text{ for all } x \in \Omega.$ Thus if $B(A^*,\sigma) \subseteq \Omega^\circ$, where $\sigma>0$ and $A^*$ is the minimal attractor of the ODE defined by $f$, it follows by Lemma~\ref{lem: S0 contains minimal attractor} that $A^* \subseteq \{x \in \Omega: J(x) \le 1 \}$. Hence, by Lem.~\ref{lem: D_V is related to vol} we have that $D_V(A^*,\{x \in \Omega: J(x) \le 1 \})= \mu(\{x \in \Omega: J(x) \le 1 \})- \mu(A^*)$. Now, $\mu(A^*)$ is a constant (since $A^*$ is not a decision variable). Therefore minimizing $D_V(A^*,\{x \in \Omega: J(x) \le 1 \})$ is equivalent to minimizing $\mu(\{x \in \Omega: J(x) \le 1 \})$.

For some $\alpha>0$, we now consider the following family of $d$-degree SOS problems,
\begin{align} \label{opt: SOS tightening}
J_{d, \alpha} & \in  \arg \inf_{J}\mu(\{x \in \Omega:J(x) \le 1 \})\\ \nonumber
&  J,s_0,k_0,k_1 \in \sum^d_{SOS}, \quad p_0 \in \mcl P_d(\R^n, \R)  \\ \nonumber
& \text{where } k_0(x)=-\nabla J(x) ^T f(x) -  (J(x) - 1) - s_0(x) g_\Omega(x) \\ \nonumber
& k_1(x) = (J(x)-1 - \alpha) - p_0(x) g_\Omega(x).
\end{align}

We now show that for sufficiently small $\alpha>0$ and ``large" $\Omega$ our quasi-SOS optimization problem proposed in Opt.~\eqref{opt: SOS tightening} is not conservative since for sufficiently large enough degree its solution yields an arbitrarily close approximation of the minimal attractor set (in the volume metric). Moreover, each solution to  Opt.~\eqref{opt: SOS tightening} yields an attractor set.

\begin{cor} \label{cor: convergence of SOS to attrcator}
	Consider $f \in \mcl P(\R^n, \R)$. Suppose $A^* \subset \R^n$ is a minimal attractor set to the ODE~\eqref{eqn: ODE} defined by $f$, $\sigma>0$, and $\Omega \subset \R^n$ is some compact set such that $B(A^*,\sigma) \subseteq \Omega$ and $\Omega \subset BOA_f(A^*)$, $\Omega = \{x\in \R^n: g_\Omega(x) \ge 0 \}$, and $\partial \Omega = \{x\in \R^n: g_\Omega(x) = 0 \}$, where $g_\Omega \in \mcl P(\R^n , \R)$. Suppose $\{J_{d, \alpha}\}_{d \in \N}$ is such that $J_{d, \alpha}$ solves the $d$-degree optimization problems given in Eq.~\eqref{opt: SOS tightening} for $\alpha>0$, then:
	\begin{enumerate}
		\item $\{x \in \Omega:J_{d, \alpha}(x) \le 1 \}$ is an attractor set for each $d \in \N$ and $\alpha>0$.
				\item $A^* \subseteq \{x \in \Omega:J_{d, \alpha}(x) \le 1 \}$ for each $d \in \N$ and $\alpha>0$.
		\item There exists $\beta>0$ such that for any $\alpha \in (0,\beta)$ we have that $\lim_{d \to \infty} D_V(A^*, \{x \in \Omega:J_{d, \alpha}(x) \le 1 \})=0$.
	\end{enumerate}
\end{cor}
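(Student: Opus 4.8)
The plan is to prove the three claims of Corollary~\ref{cor: convergence of SOS to attrcator} in order, drawing on Proposition~\ref{prop: LF implies attractor set}, Lemmas~\ref{lem: S0 is nonempty} and~\ref{lem: S0 contains minimal attractor}, and Theorem~\ref{thm: existence of sos LF}. The first observation is that any $J_{d,\alpha}$ feasible to Opt.~\eqref{opt: SOS tightening} automatically satisfies the Lyapunov inequality Eq.~\eqref{eqn: LF ineq} on $\Omega$: from $k_0 \in \sum_{SOS}^d$ and $k_0(x) = -\nabla J(x)^T f(x) - (J(x)-1) - s_0(x) g_\Omega(x)$ with $s_0 \in \sum_{SOS}^d$, on the set $\Omega = \{g_\Omega \ge 0\}$ we get $s_0(x) g_\Omega(x) \ge 0$ and $k_0(x) \ge 0$, hence $-\nabla J(x)^T f(x) - (J(x)-1) \ge s_0(x)g_\Omega(x) \ge 0$, i.e. $\nabla J(x)^T f(x) \le -(J(x)-1)$ for all $x \in \Omega$. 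Likewise the $k_1$ constraint gives $J(x) - 1 - \alpha \ge p_0(x)g_\Omega(x)$, which on $\partial\Omega = \{g_\Omega = 0\}$ forces $J(x) \ge 1 + \alpha > 1$ on $\partial\Omega$; since $J$ is continuous this yields $\{x \in \Omega : J(x) \le 1\} \subseteq \Omega^\circ$, i.e. Eq.~\eqref{eqn: S0 inside intererior}. Finally, since $B(A^*,\sigma) \subseteq \Omega$ implies $A^* \subseteq \Omega$, Lemma~\ref{lem: S0 is nonempty} gives $\{x \in \Omega : J(x) \le 1\} \ne \emptyset$, i.e. Eq.~\eqref{eqn: S0 non-empty}. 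Applying Proposition~\ref{prop: LF implies attractor set} then proves claim 1: $\{x \in \Omega : J_{d,\alpha}(x) \le 1\}$ is an attractor set. For claim 2, the Lyapunov inequality just derived together with the hypothesis $B(A^*,\sigma) \subseteq \Omega$ lets us invoke Lemma~\ref{lem: S0 contains minimal attractor} directly, giving $A^* \subseteq \{x \in \Omega : J_{d,\alpha}(x) \le 1\}$ for every $d$ and every $\alpha > 0$.

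The third claim is the substantive one, and the plan is to produce, for suitably small $\alpha$, a \emph{feasible} sequence whose sublevel-set volumes converge to $\mu(A^*)$, and then use the fact that $J_{d,\alpha}$ is the minimizer to conclude. First I would apply Theorem~\ref{thm: existence of sos LF} with the attractor set $A^*$ and the compact set $\Omega$ (noting the hypotheses $A^* \subseteq \Omega^\circ$ — which follows from $B(A^*,\sigma) \subseteq \Omega$ after possibly shrinking — and $\Omega \subset BOA_f(A^*)$ are exactly those required): this yields a sequence $\{P_d\}_{d \in \N} \subset \sum_{SOS}$, a scalar $\alpha^* > 0$, and $N \in \N$ such that for $d \ge N$, $\nabla P_d(x)^T f(x) < -(P_d(x)-1)$ on $\Omega$, $P_d(x) > 1 + \alpha^*$ on $\partial\Omega$, $A^* \subseteq \{x \in \Omega : P_d(x) \le 1\}$, and $D_V(A^*, \{x \in \Omega : P_d(x) \le 1\}) \to 0$. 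Set $\beta := \alpha^*$. The key point is that for any $\alpha \in (0,\beta)$ and any $d \ge N$, the polynomial $P_d$ is feasible for Opt.~\eqref{opt: SOS tightening}: the strict Lyapunov inequality on the compact set $\Omega = \{g_\Omega \ge 0\}$ admits (after increasing the degree $d$ if necessary, to enough to contain all the certificates) a Positivstellensatz representation $-\nabla P_d(x)^T f(x) - (P_d(x)-1) = k_0(x) + s_0(x) g_\Omega(x)$ with $k_0, s_0 \in \sum_{SOS}$, and similarly $P_d(x) - 1 - \alpha > 0$ where $g_\Omega(x) \le 0$... here one needs a representation valid on the \emph{complement} $\{g_\Omega \le 0\}$ hence a representation $P_d(x) - 1 - \alpha = k_1(x) + p_0(x) g_\Omega(x)$ with $k_1 \in \sum_{SOS}$ and $p_0$ a (not necessarily SOS) polynomial — here $p_0$ being a free polynomial is crucial, since $\{g_\Omega \le 0\}$ is typically unbounded and Putinar's theorem does not apply, but the weaker claim that a polynomial positive on $\{g_\Omega \le 0\}$ and strictly positive on $\partial\Omega$ with positive lower bound has such a representation (with $p_0$ free) needs a careful argument or citation.

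Once feasibility of $P_d$ (for $d$ large and $\alpha < \beta$) is established, the optimality of $J_{d,\alpha}$ gives $\mu(\{x \in \Omega : J_{d,\alpha}(x) \le 1\}) \le \mu(\{x \in \Omega : P_d(x) \le 1\})$. Since $A^* \subseteq \{x \in \Omega : J_{d,\alpha}(x) \le 1\}$ (claim 2) and $A^* \subseteq \{x \in \Omega : P_d(x) \le 1\}$, Lemma~\ref{lem: D_V is related to vol} converts these volume statements into $D_V(A^*, \{x \in \Omega : J_{d,\alpha}(x) \le 1\}) = \mu(\{x \in \Omega : J_{d,\alpha}(x) \le 1\}) - \mu(A^*) \le \mu(\{x \in \Omega : P_d(x) \le 1\}) - \mu(A^*) = D_V(A^*, \{x \in \Omega : P_d(x) \le 1\})$. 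Letting $d \to \infty$ and using $D_V(A^*, \{x \in \Omega : P_d(x) \le 1\}) \to 0$ (part 4 of Theorem~\ref{thm: existence of sos LF}) together with non-negativity of $D_V$ squeezes $D_V(A^*, \{x \in \Omega : J_{d,\alpha}(x) \le 1\}) \to 0$, which is claim 3.

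The main obstacle I anticipate is the feasibility step — specifically, the passage from the strict inequalities produced by Theorem~\ref{thm: existence of sos LF} (valid on $\Omega$ and $\partial\Omega$) to an exact Positivstellensatz-type representation with SOS multipliers of bounded degree that matches the precise template in Opt.~\eqref{opt: SOS tightening}. For the $k_0$ constraint this is Putinar's Positivstellensatz on the compact set $\Omega$ (assuming $g_\Omega$ or the description of $\Omega$ satisfies the Archimedean condition, which should be arranged), but the $k_1$ constraint concerns positivity on the closed (generally unbounded) set $\{g_\Omega \le 0\}$, so one cannot invoke Putinar; instead one must argue that $p_0$ being an unconstrained polynomial (rather than SOS) is exactly what makes the representation $J - 1 - \alpha = k_1 + p_0 g_\Omega$ achievable — essentially because on $\partial\Omega$ we have a uniform positive gap $P_d - 1 - \alpha > \alpha^* - \alpha > 0$, and one can explicitly construct $p_0$ to absorb the behaviour of $P_d - 1 - \alpha$ away from $\partial\Omega$. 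Handling this cleanly (or citing an appropriate representation theorem and checking degree bounds so that $d \ge N$ can be taken uniformly) is the technical crux; everything else is bookkeeping with the already-proven lemmas.
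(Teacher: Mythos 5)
Your handling of claims 1 and 2, and the overall architecture of claim 3 (build a feasible competitor $P_m$ from Theorem~\ref{thm: existence of sos LF} with $\beta$ taken as its $\alpha$, use optimality of $J_{d,\alpha}$ to compare sublevel-set volumes, convert volumes to $D_V$ via Lemma~\ref{lem: D_V is related to vol} using the containments from claim 2 and Theorem~\ref{thm: existence of sos LF}, then squeeze) coincide with the paper's proof. The one genuine gap is the step you yourself flag and leave open: feasibility of $P_m$ with respect to the $k_1$ constraint of Opt.~\eqref{opt: SOS tightening}. There you misidentify what must be certified. You treat the constraint as requiring positivity of $P_m-1-\alpha$ on the (generally unbounded) set $\{g_\Omega\le 0\}$, correctly note that Theorem~\ref{thm: existence of sos LF} only gives $P_m>1+\beta$ on $\partial\Omega$ and that Putinar does not apply to an unbounded set, and then stop. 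But no such certificate is needed: because $p_0$ is a \emph{free} polynomial, the condition $(J-1-\alpha)-p_0\,g_\Omega\in\sum_{SOS}$ is exactly membership of $J-1-\alpha$ in the quadratic module generated by the \emph{equality} constraint $g_\Omega=0$, i.e.\ by the pair of inequalities $g_\Omega\ge 0$ and $-g_\Omega\ge 0$ whose intersection is $\partial\Omega$.

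Concretely, apply Theorem~\ref{thm: Psatz} to the semialgebraic set $\{x:g_\Omega(x)\ge 0,\ -g_\Omega(x)\ge 0\}=\partial\Omega$; its compactness hypothesis holds because $\{g_\Omega\ge 0\}=\Omega$ is compact, and $P_m(x)-1-\alpha\ge\beta-\alpha>0$ on $\partial\Omega$ for $\alpha\in(0,\beta)$. This yields SOS polynomials $s_2,s_3,s_4$ with $P_m-1-\alpha=s_3+s_2\,g_\Omega-s_4\,g_\Omega$, so taking $p_0:=s_2-s_4$ and $k_1:=s_3$ satisfies the $k_1$ constraint; combined with the Putinar certificate on $\Omega$ for the strict Lyapunov inequality (your $k_0$ step, which is fine), $P_m$ is feasible for every degree $d\ge N_2:=\max\{m,\max_i\deg(s_i)\}$. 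This also settles your worry about degree uniformity: $m$ and its certificates are fixed once $\eps$ is fixed, and the volume comparison then holds for all $d\ge N_2$, which is all the limit argument needs. This is precisely how the paper closes the step (it invokes Theorem~\ref{thm: Psatz} and absorbs the sign-indefinite multiplier of $g_\Omega$ into the free polynomial $p_0$). With this insertion your proposal becomes the paper's proof; without it, claim 3 is not established.
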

\begin{proof}
	In order to prove  Cor.~\ref{cor: convergence of SOS to attrcator} we will now split the remainder of the proof into three parts showing each of the three statements of Cor.~\ref{cor: convergence of SOS to attrcator}.
	
	\underline{\textbf{Proof $\{x \in \Omega:J_{d, \alpha}(x) \le 1 \}$ is an attractor set:}} By Prop.~\ref{prop: LF implies attractor set} it follows that $\{x \in \Omega:J_{d, \alpha}(x) \le 1 \}$ is an attractor set if $J_{d, \alpha}$ satisfies Eqs.~\eqref{eqn: LF ineq}, \eqref{eqn: S0 inside intererior} and~\eqref{eqn: S0 non-empty}. Since $J_{d, \alpha}$ is assumed to feasible to Opt.~\eqref{opt: SOS tightening} it follows that $J_{d, \alpha}$ satisfies the constraints of Opt.~\eqref{opt: SOS tightening} and hence $J_{d, \alpha}$ trivially satisfies Eq.~\eqref{eqn: LF ineq}. Moreover by the constraints of Opt.~\eqref{opt: SOS tightening} it follows that $J_{d, \alpha}(x) \ge 1+ \alpha>1$ for all $x \in \partial \Omega$ and hence $\{x \in \Omega: J_{d, \alpha}(x) \le 1\} \subseteq \Omega^\circ$, implying $J_{d, \alpha}$ satisfies Eq.~\eqref{eqn: S0 inside intererior}. Finally since $A^* \subseteq \Omega$ and $J_{d, \alpha}$ satisfies Eq.~\eqref{eqn: LF ineq} it follows by Lem.~\ref{lem: S0 is nonempty} that $J_{d, \alpha}$ satisfies Eq.~\eqref{eqn: S0 non-empty}.
	
\underline{\textbf{Proof $A^* \subseteq \{x \in \Omega:J_{d, \alpha}(x) \le 1 \}$:}} Since $J_{d, \alpha}$ satisfies the constraints of Opt.~\eqref{opt: SOS tightening} it follows that
\begin{align*}
\nabla J_{d, \alpha}(x)^Tf(x) \le -(\nabla J_{d, \alpha}(x) -1) \text{ for all } x \in \Omega.
\end{align*}
Since $B(A^*,\sigma) \subset \Omega$ and $A^*$ is the minimal attractor set of the ODE defined by $f$ it follows from Lemma~\ref{lem: S0 contains minimal attractor} that $A^* \subseteq \{x \in \Omega:J_{d, \alpha}(x) \le 1 \}$.

	\underline{\textbf{Proof $\lim_{d \to \infty} D_V(A^*, \{x \in \Omega:J_{d, \alpha}(x) \le 1 \})=0$:}} We show that there exists $\beta>0$ such that for any $\alpha\in (0,\beta)$ and $\eps>0$ there exists $N \in \N$ such that $D_V(A^*, \{x \in \Omega:J_{d, \alpha}(x) \le 1 \})<\eps$ for all $d \ge N$.
	
 By Theorem~\ref{thm: existence of sos LF} it follows that there exists $\beta>0$ such that for $\eps>0$ there exists $N_1 \in \N$, and $P_m \in \sum_{SOS}^m(\R^n, \R)$ such that
 \begin{align} \label{Jm satisfies LF ineq}
&\nabla P_m(x)^Tf(x)<-(P_m(x)-1) \text{ for all } x \in \Omega \text{ and } m >N_1,\\ \label{Jm greater on boundary}
&P_m(x)>1 + \beta > 1 + \alpha\\ \nonumber
& \hspace{2cm} \text{ for all } x \in \partial \Omega, \alpha \in (0,\beta), \text{ and } m >N_1,\\ \label{Jm contains minimal attractor}
&A^* \subseteq \{x \in \Omega:P_m(x) \le 1 \} \text{ for all } m > N_1,\\ \label{Jm approx A}
& D_V(A^*,\{x \in \Omega:P_m(x) \le 1 \} )< \eps \text{ for all } m \ge N_1.
 \end{align}

For any $\alpha \in (0,\beta)$,  by Eqs.~\eqref{Jm satisfies LF ineq} and~\eqref{Jm greater on boundary} and Theorem~\ref{thm: Psatz} there exists $s_0,s_1,s_2,s_3,s_4,s_5 \in \sum_{SOS}$ for each $m > N_1$ such that $-\nabla P_m(x)^Tf(x) -(P_m(x)-1) -s_0(x) g_\Omega(x) =s_1(x)$, and $(P_m(x) - 1 - \alpha) - s_2(x)g_\Omega(x)= s_3(x)$, and $(P_m(x) - 1 - \alpha) + s_4(x)g_\Omega(x)= s_5(x)$. Fix $m > N_1$ and let $N_2:= \max\{m, \max_{0 \le i \le 5}\deg(s_i)  \}$. Then it follows that $P_m$ is feasible to Opt.~\eqref{opt: SOS tightening} for degree $d \ge N_2$ (with $p_0(x):=0.5(s_4(x) -s_2(x))$). Since, $J_{d, \alpha}$ solves the Opt.~\eqref{opt: SOS tightening} and $P_m$ is feasible to Opt.~\eqref{opt: SOS tightening} it follows that,
	\begin{align} \label{Pd less than Jm}
 \mu(\{x \in \Omega:J_{d,\alpha}(x) \le 1 \}) \le & \mu(\{x \in \Omega:P_m(x) \le 1 \}) \\ \nonumber
& \text{ for all } d \ge N_2.
	\end{align}

	Hence, using Lemma~\ref{lem: D_V is related to vol} along with the fact that $A^* \subseteq \{x \in \Omega:J_{d,\alpha}(x) \le 1 \}$ (by Lem.~\ref{lem: S0 contains minimal attractor}) and Eqs.~\eqref{Jm contains minimal attractor},~\eqref{Jm approx A}, and~\eqref{Pd less than Jm}, it follows that,
	\begin{align*}
	  D_V(A^*, & \{x \in \Omega:J_{d,\alpha}(x) \le 1 \})  \\
	 & = \mu(\{x \in \Omega:J_{d,\alpha}(x) \le 1 \}) - \mu(A^*)\\
	 & \le \mu(\{x \in \Omega:P_m(x) \le 1 \}) - \mu(A^*)\\
	 & =  D_V(A^*, \{x \in \Omega:P_m(x) \le 1 \})\\
	 &<\eps \text{ for all } d \ge N_2.
	\end{align*} \end{proof}

\subsection{Heuristic Volume Minimization of Sublevel Sets of SOS Polynomials} \label{subsec: det obj}

Unfortunately, it is still not possible for us to solve the family of $d$-degree optimization problems given in Eq.~\eqref{opt: SOS tightening} since there is no known convex closed form analytical expression for the objective function (the volume of a sublevel set of an SOS polynomial). To make the problem tractable we replace the objective function in Eq.~\eqref{opt: SOS tightening} with a convex objective function based on the determinant. We next present two convex candidate objective functions based on the determinant.
{\begin{rem}
The functions $f_1: S^n_{++} \to \R$ and $f_2: S^n_{++} \to \R$ defined as,
\begin{align*}
f_1({P})&= -\log \det(P),\\
f_2({P})&=- (\det(P))^{\frac{1}{n}},
\end{align*}
are convex.
\end{rem}}
Heuristically, maximizing $\det(P)$ increases the value of $V(x)=z_d(x)^T P z_d(x)$ for all $x \in \R^n$. Therefore, for larger $\det(P)$ there will be less $y \in \R^n$ such that $y \in \{x\in \R^n: V(x) \le 1 \}$. Hence we would expect $\mu( \{x\in \R^n: V(x) \le 1 \})$ to decrease as $\det(P)$ increases. In the $2$-degree (quadratic) case this argument is not heuristic. We next show that maximizing the determinant is equivalent to minimizing the volume of the sublevel set of a quadratic polynomial.
\begin{lem}[\cite{jones2018using}] \label{lem: vol of ellipse and det}
	Consider $P \in S^n_{++}$. The following holds,
	\begin{align*}
	\mu(\{x\in \R^n: x^T P x \le 1 \})= \frac{\pi^\frac{n}{2} }{\Gamma(\frac{n}{2}+1) \sqrt{\det(P)} },
	\end{align*}
	where $\Gamma$ is the gamma function.
\end{lem}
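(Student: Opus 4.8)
The plan is to compute the volume directly via a change of variables that diagonalizes the quadratic form. First I would invoke the spectral theorem to write $P = Q^T D Q$ where $Q$ is orthogonal and $D = \diag(\lambda_1,\dots,\lambda_n)$ with $\lambda_i > 0$ the eigenvalues of $P$ (these are positive since $P \in S^n_{++}$). Then the substitution $y = D^{1/2} Q x$ is a linear bijection whose Jacobian has absolute value $\det(D^{1/2} Q) = \sqrt{\det D} \cdot |\det Q| = \sqrt{\det P}$, since $|\det Q| = 1$. Under this substitution the constraint $x^T P x \le 1$ becomes $\|y\|_2^2 \le 1$, so the image of the ellipsoid is exactly the unit Euclidean ball $B(0,1) \subset \R^n$.

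Applying the change-of-variables formula for the Lebesgue measure gives
\begin{align*}
\mu(\{x \in \R^n : x^T P x \le 1\}) &= \int_{\{x : x^T P x \le 1\}} 1 \, dx \\
&= \int_{B(0,1)} \frac{1}{\sqrt{\det P}} \, dy = \frac{\mu(B(0,1))}{\sqrt{\det P}}.
\end{align*}
The final step is to substitute the known value of the volume of the $n$-dimensional unit ball, $\mu(B(0,1)) = \pi^{n/2} / \Gamma(\tfrac{n}{2}+1)$, which is a standard formula (provable by induction on $n$ or via the Gaussian integral identity $\int_{\R^n} e^{-\|x\|_2^2} dx = \pi^{n/2}$ together with integration in spherical shells). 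This yields exactly the claimed expression $\mu(\{x \in \R^n : x^T P x \le 1\}) = \pi^{n/2} / (\Gamma(\tfrac{n}{2}+1)\sqrt{\det P})$.

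There is no real obstacle here; the result is essentially a textbook computation, which is consistent with it being cited from \cite{jones2018using}. The only point requiring minor care is the Jacobian bookkeeping --- ensuring one tracks $|\det(D^{1/2}Q)| = \sqrt{\det P}$ correctly and notes that the factor appears as $1/\sqrt{\det P}$ after transforming the integral (since $dx = dy / \sqrt{\det P}$) --- and, if one wants the proof fully self-contained, justifying the unit-ball volume formula. If the paper prefers not to assume the unit-ball volume, I would include the short Gaussian-integral derivation; otherwise I would simply cite it as standard.
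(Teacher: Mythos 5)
Your proof is correct: the spectral decomposition $P = Q^T D Q$, the substitution $y = D^{1/2}Qx$ with Jacobian factor $\sqrt{\det P}$, and the standard unit-ball volume $\pi^{n/2}/\Gamma(\tfrac{n}{2}+1)$ together give exactly the claimed identity. The paper itself offers no proof of this lemma --- it simply cites it from prior work --- and your change-of-variables computation is the standard argument one would find there, so there is nothing further to reconcile.
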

Lemma~\ref{lem: vol of ellipse and det} shows that maximizing $\det(P)$ minimizes $\mu(\{x\in \R^n: x^T P x \le 1 \})$. Thus equivalently, maximizing the convex functions $\log \det(P)$ or $ (\det(P))^{\frac{1}{\mcl N_d}}$ minimizes $\mu(\{x\in \R^n: x^T P x \le 1 \})$ (since both the functions $f_1(x)= \log(x)$ and $f_2(x) = x^{\frac{1}{n}}$ are monotonic functions for $x >0$). We next extend this approach of maximizing the determinant to minimize the volume of a sublevel set of a SOS polynomial to higher degrees.

Rather than solving Opt.~\eqref{opt: SOS tightening} we solve the following family $d$-degree SOS problems for some $\alpha>0$,
\begin{align} \label{opt: SOS det}
P_d & \in  \arg \sup_{J}(\det P)^{\frac{1}{\mcl N_d}}\\ \nonumber
&  J,s_0,k_0,k_1 \in \sum^d_{SOS},  \quad p_0 \in \mcl P_d(\R^n, \R)  \\ \nonumber
&  \text{where, } J=z_d(x)^T P z_d(x), \text{ and,}\\ \nonumber
& P>0,\\ \nonumber
&k_0(x)=-\nabla J(x) ^T f(x) -  (J(x) - 1) - s_0(x) g_\Omega(x)\\ \nonumber
& k_1(x) = (J(x)-1 - \alpha) - p_0(x) g_\Omega(x).
\end{align}
Note that it is equivalent to solve Opt.~\eqref{opt: SOS det} with an objective function of form $(\det P)^{\frac{1}{\mcl N_d}}$ or $\log \det P$. For implementation purposes we have chosen to use an objective function of form $(\det P)^{\frac{1}{\mcl N_d}}$ since Yalmip~\cite{lofberg2004yalmip} allows this formulation of the problem to be solved by various SDP solves. For an objective function of form $\log \det P$ we use SOSTOOLS~\cite{prajna2002introducing} and SDPT3~\cite{sdpt3_2003}.

\subsection{A Further Simplification of Optimization Problem~\eqref{opt: SOS det}} \label{subsec:futher simp} Typically, through numerical experimentation, we find that if sublevel set volume of $\{x \in \Omega : J(x) \le 1\}$ is sufficiently minimized and $\Omega$ is sufficiently large, then $\{x \in \Omega : J(x) \le 1\}  \subseteq \Omega^\circ$ is automatically satisfied. Therefore, it is often unnecessary to enforce the constraint $(J(x)-1 - \alpha) - p_0(x) g_\Omega(x) \in \sum_{SOS}^d$ in Opt.~\eqref{opt: SOS det} -- thus further reducing computational complexity of the SOS programming problem.

\section{Numerical Examples} \label{sec: numerical examples}
In this section we will present the results of solving
the Opt.~\eqref{opt: SOS det} for several dynamical systems. For
these examples Opt.~\eqref{opt: SOS det} was solved using Yalmip~\cite{lofberg2004yalmip}. In Example~\ref{ex:lorenz} we approximate a ``strange" attractor, in Example~\ref{ex: van} we approximate a limit cycle, and finally in Example~\ref{ex: ahmadi} we approximate an equilibrium point.

\begin{ex}[Numerical Approximation of the Lorenz Attractor] \label{ex:lorenz}

Consider the following three dimensional second order nonlinear dynamical system (known as the Lorenz system):
\begin{align} \label{eqn: Lorenz attactor}
\dot{x}_1(t) & = \sigma ( x_2(t) - x_1(t) ), \\ \nonumber
\dot{x}_2(t) & = \rho x_1(t) - x_2(t) - x_1(t)x_3(t), \\ \nonumber
\dot{x}_3(t) & = x_1(t)x_2(t) - \beta x_3(t),
\end{align}
where $(\sigma,\rho,\beta) = (10,28,\frac{8}{3})$. It is well known that for such $(\sigma,\rho,\beta)$ the ODE~\eqref{eqn: Lorenz attactor} exhibits a global ``chaotic" attractor.

Fig.~\ref{fig:Lorentz_outer_approx} shows our Lorenz attractor approximation given by the $1$-sublevel of the solution to the SOS Problem~\eqref{opt: SOS det} for $d=8$, $\alpha=0.0001$, $g_\Omega(x)=R^2 - x_1^2 - x_2^2 - x_3^2$, $R=3$, and scaled dynamics given by the ODE~\eqref{eqn: Lorenz attactor}. For $d \ge 10$ the volume of our Lorenz attractor approximation becomes so small that we are unable store enough grid-points to sufficiently plot the contour of the $1$-sublevel set of our SOS Lyapunov function.

\begin{figure}
	\flushleft
	\includegraphics[scale=0.6]{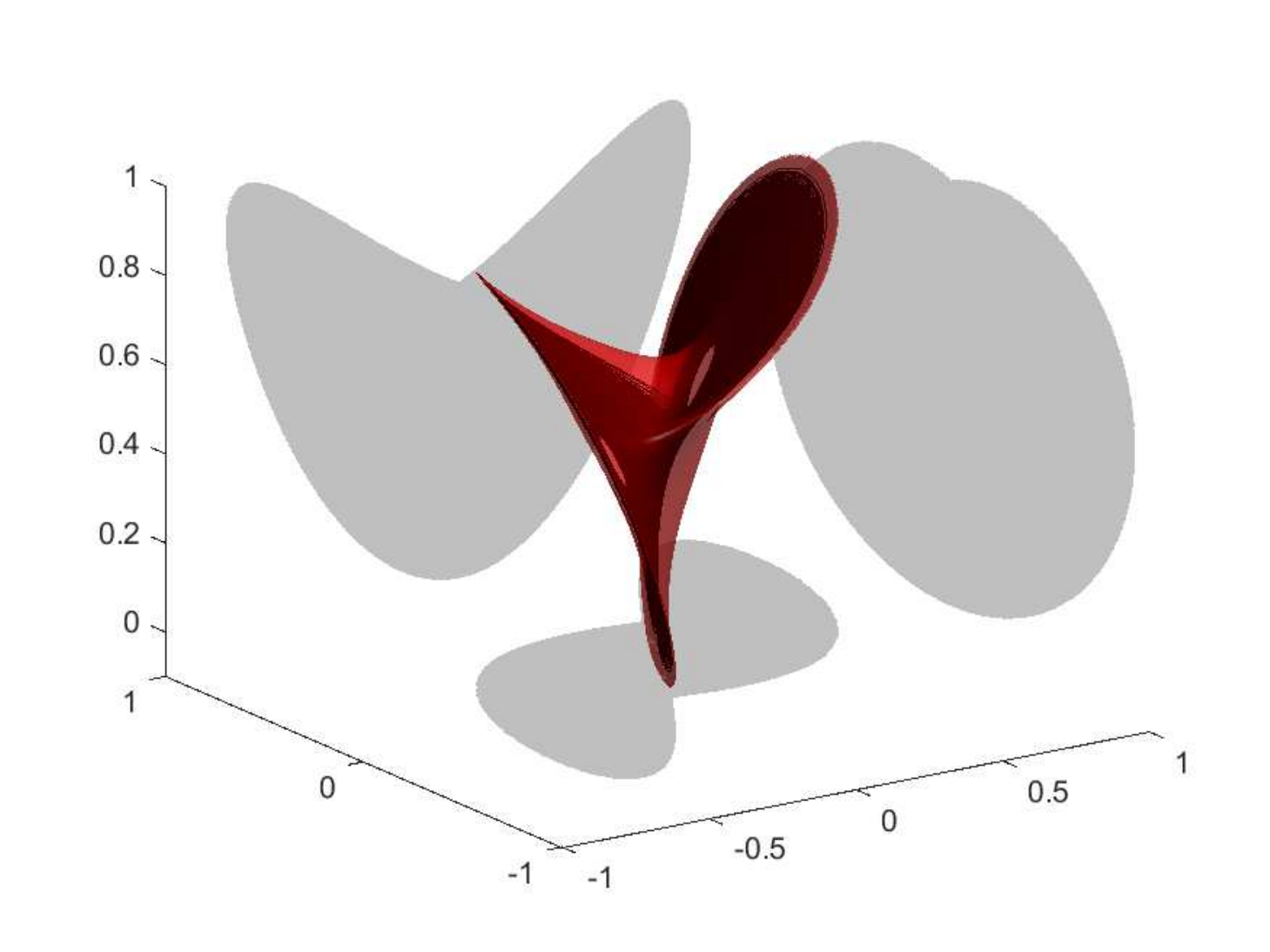}
	\vspace{-20pt}
	\caption{Graph showing an estimation of the Lorenz attractor (Example~\ref{ex:lorenz}) given by the red transparent surface. This surface is the $1$-sublevel set of a solution to the SOS Problem~\eqref{opt: SOS det}. The grayed shaded surfaces represent the projection of the our Lorenz attractor estimation on the xy, xz, and yz axes. The black line is an approximation of the attractor found by simulating a Lorenz trajectory using Matlab's $\texttt{ODE45}$ function. }\label{fig:Lorentz_outer_approx}
	\vspace{-10pt}
\end{figure}

\end{ex}

\begin{ex}[Numerical approximation of the Van der Poll oscillator] \label{ex: van}
	Consider the following two dimensional third order nonlinear dynamical system:
	\begin{align} \label{eqn: van}
	\dot{x_1}(t)&=x_2(t), \\ \nonumber
	\dot{x_2}(t)& = (1-x_1^2(t))x_2(t) - x_1(t).
	\end{align}	
	It is well known that the ODE~\eqref{eqn: ODE} possess a limit cycle called the Van der Poll oscillator. Let us denote this limit cycle by $A^* \subset \R^n$. The ODE also possess an unstable equilibrium point at the origin (which is not an attractor set since the solution map initialized inside neighborhoods of the origin moves away from the origin towards the limit cycle). However, $\phi_f(0,t) =0 \in \R^2$ for all $t \ge 0$, where $\phi_f$ is the solution map of the ODE~\eqref{eqn: ODE}. Therefore, $0 \notin BOA_f(A^*)$. In order to apply Theorem~\ref{thm: existence of sos LF} we require $\Omega \subset BOA_f(A^*)$. Hence, we must be careful to construct  $\Omega = \{x\in \R^n: g_\Omega(x) \ge 0 \}$ such that $0 \notin \Omega$.
	
	Fig.~\ref{fig:van} shows our Van der Poll oscillator approximation given by the $1$-sublevel of the solution to the SOS Problem~\eqref{opt: SOS det} for $d=12$, $\alpha=0.0001$, {$g_1(x)=-(R_1^2 - x_1^2 - x_2^2)( R_2^2 - x_2^2 - x_2^2)$, $R_1=0.45$, $R_2=1$}, and scaled dynamics given by the ODE~\eqref{eqn: van}.

	\begin{figure}
		\flushleft
		\includegraphics[scale=0.6]{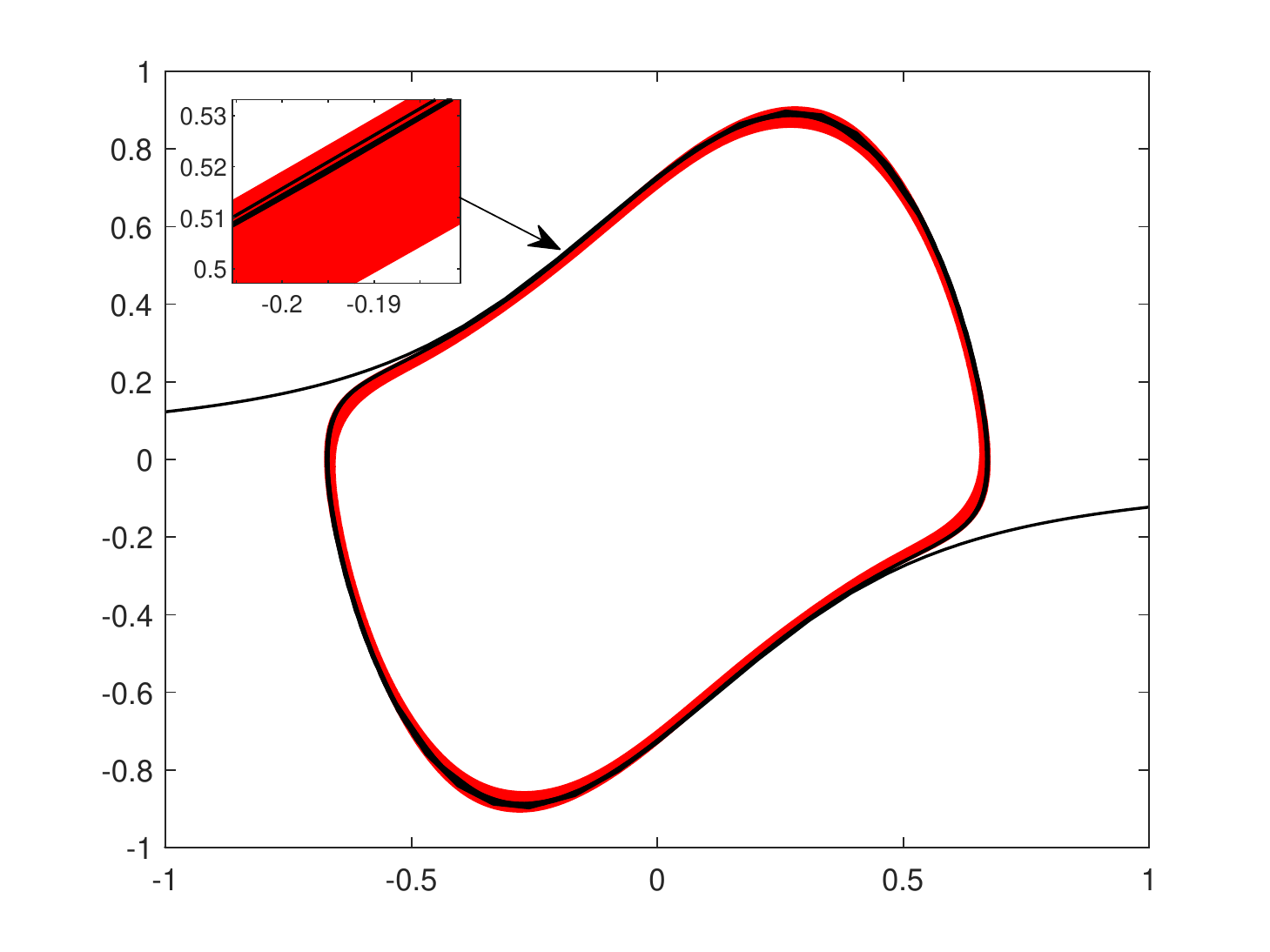}
		\vspace{-20pt}
		\caption{Graph showing an estimation of the attractor (given by the red area) of the ODE~\eqref{eqn: van} in Example~\ref{ex: van}. This red area is the $1$-sublevel set of a solution to the SOS Problem~\eqref{opt: SOS det}. The two black lines are simulated solution maps of the ODE~\eqref{eqn: no poly LF} using Matlab's $\texttt{ODE45}$ function initialized outside of the limit cycle. }\label{fig:van}
		\vspace{-10pt}
	\end{figure}
	
\end{ex}

\begin{ex} \label{ex: ahmadi}
Consider the following two dimensional seventh order nonlinear dynamical system:
\begin{align} \label{eqn: no poly LF}
\dot{x_1}(t)&=-2x_2(t)(-x_1^4(t) + 2x_1^2(t)x_2^2(t) + x_2^4(t)) \\ \nonumber
& \quad  - 2x_1(t)(x_1^2(t) + x_2^2(t))(x_1^4(t) + 2x_1^2(t)x_2^2(t) - x_2^4(t)),\\ \nonumber
\dot{x_2}(t)& = 2x_1(t)(x_1^4(t) + 2x_1^2(t)x_2^2(t) - x_2^4(t)) \\ \nonumber
& \quad  - 2x_2(t)(x_1^2(t) + x_2^2(t))(-x_1^4(t) + 2x_1^2(t)x_2^2(t) + x_2^4(t)).
\end{align}	
It was shown in~\cite{ahmadi2018globally} that $A^*=\{0\}$ is a global attractor set of the ODE~\eqref{eqn: no poly LF}. In other words, the ODE~\eqref{eqn: no poly LF} is globally asymptotically stable about the origin. This stability was shown using the following non-polynomial Lyapunov function:
\begin{align*}
W(x) = \begin{cases}
\frac{x_1^4 + x_2^4}{x_1^2 + x_2^2} \text{ if } x \ne 0\\
0 \text{ otherwise}
\end{cases}.
\end{align*}
Clearly, $W$ is not a SOS polynomial (or even polynomial). It was further shown in~\cite{ahmadi2018globally} that there exists no polynomial Lyapunov function that can certify the asymptotic stability of the origin of the ODE~\eqref{eqn: no poly LF}. However, Theorem~\ref{thm: existence of sos LF} implies there does exist a SOS Lyapunov functions that can certify the stability of an arbitrarily small neighbourhood of $A^*=\{0\}$ with respect the volume metric. Furthermore, we can heuristically attempt to find these Lyapunov functions by solving the SOS Opt.~\eqref{opt: SOS det}.

Fig.~\ref{fig:ahmadi} shows our approximation of the ODE~\eqref{eqn: no poly LF} given by the $1$-sublevel of the solution to the SOS Problem~\eqref{opt: SOS det} for $d=10$, $\alpha=0.0001$, $g_\Omega(x)=R^2 - x_1^2 - x_2^2$, $R=1$, and $f$ as in the ODE~\eqref{eqn: no poly LF} (scaled by a factor of 1000 to improve SDP solver performance). Unfortunately, increasing $d \in \N$ to a greater value than $10$ makes the SDP solver (Mosek) return a numerical error. We believe improvements in SDP solvers for large scale problems will allow us to solve the SOS Opt.~\eqref{opt: SOS det} for larger degrees and improve our estimations of attractor sets. {Furthermore, we note that this lack of convergence as $d \to \infty$ may demonstrate the limitations in our approach of using matrix determinant maximization to minimize the volume of sublevel sets.}

\begin{figure}
	\flushleft
	\includegraphics[scale=0.6]{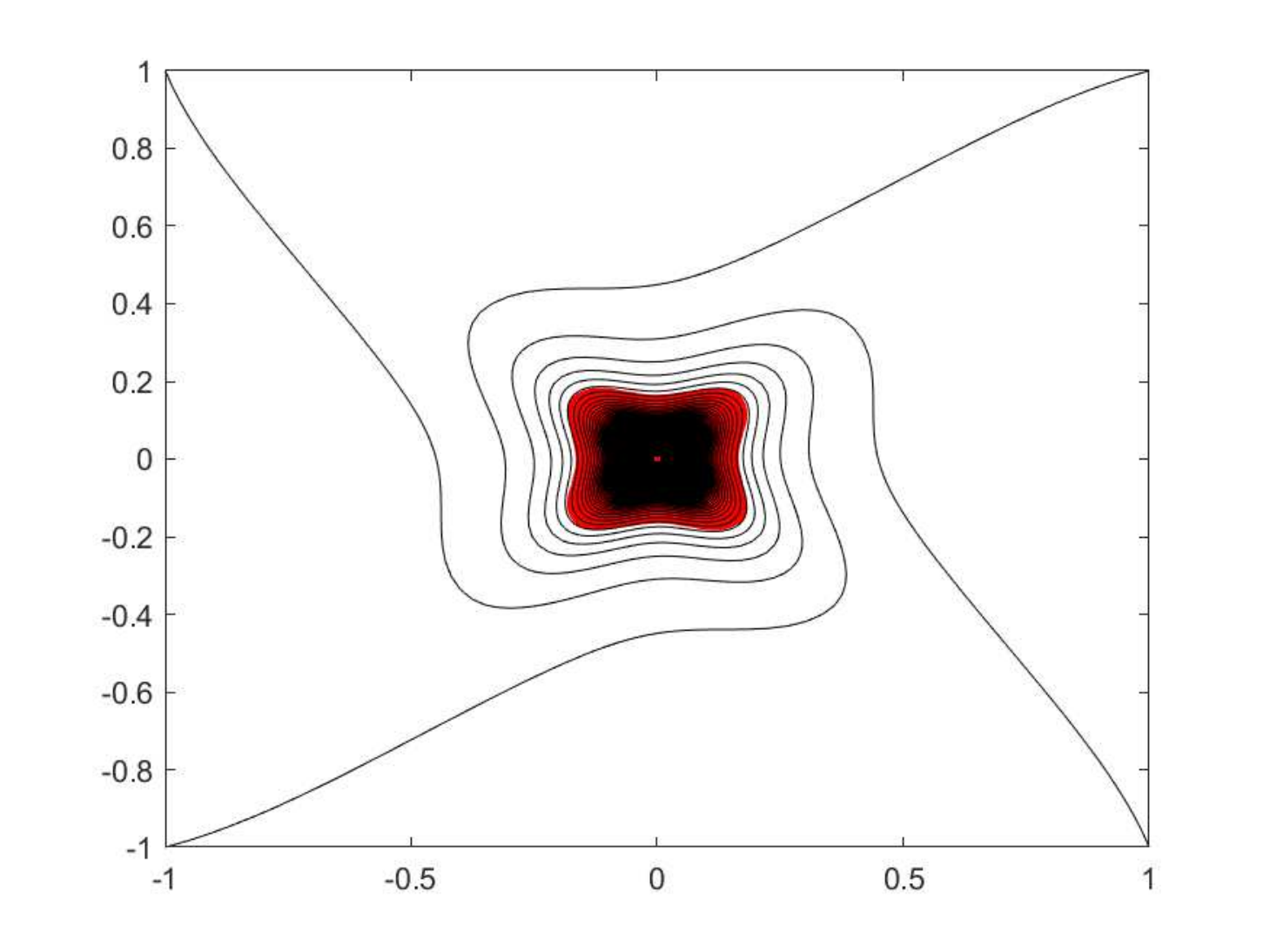}
	\vspace{-20pt}
	\caption{Graph showing an estimation of the attractor (given by the red area) of the ODE~\eqref{eqn: no poly LF} in Example~\ref{ex: ahmadi}. This red area is the $1$-sublevel set of a solution to the SOS Problem~\eqref{opt: SOS det}. The four black lines are simulated solution maps initialized at $[ \pm 1, \pm 1]^T$ of the ODE~\eqref{eqn: no poly LF} using Matlab's $\texttt{ODE45}$ function. }\label{fig:ahmadi}
	\vspace{-10pt}
\end{figure}

\end{ex}

%
%

\section{Conclusion} \label{sec: conclusion}
We have proposed a new Lyapunov characterization of attractor sets that is well suited to the problem of finding the minimal attractor set. We have shown that our proposed Lyapunov characterization of attractor sets is non-conservative even when restricted to SOS Lyapunov functions. Specifically, given an attractor set associated with some ODE we have shown that there exists a sequence of SOS Lyapunov functions that yield a sequence of sublevel sets, each containing the attractor set, each being an attractor set themselves, and converging to the attractor set in the volume metric. We have used this theoretical result to design an SOS based algorithm for minimal attractor set approximation based on determinant maximization as a proxy for sublevel set volume minimization. Several numerical examples demonstrate how our proposed SOS based algorithm can provide tight approximations of several well known attractor sets such as the Lorenz attractor and Van-der-Poll oscillator.

\section{Appendix A: Miscellaneous Results}


\begin{lem}[Sublevel sets of continuous functions are closed] \label{lem: sublevel set is closed}
Suppose $f \in C(\R^n, \R)$ and $\Omega$ is compact set, then the set $\{x \in \Omega: f(x) \le c\}$, where $c \in \R$, is closed.
\end{lem}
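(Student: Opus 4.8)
The plan is to show that $S:=\{x\in\Omega : f(x)\le c\}$ contains all of its limit points, which by the definition of a closed set given in Section~\ref{subsec: set notation} is exactly what is required. Equivalently, it suffices to use the sequential characterization: show that whenever $\{x_k\}_{k\in\N}\subset S$ and $x_k\to x$ in $\R^n$, we have $x\in S$.

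So suppose $\{x_k\}_{k\in\N}\subset S$ with $x_k\to x$. First, since each $x_k\in\Omega$ and $\Omega$ is compact (hence closed), the limit $x$ lies in $\Omega$. Second, since $f\in C(\R^n,\R)$ is continuous, $f(x_k)\to f(x)$; and since $f(x_k)\le c$ for every $k$, passing to the limit yields $f(x)\le c$. Combining the two facts, $x\in\Omega$ and $f(x)\le c$, i.e. $x\in S$. Hence $S$ contains the limits of all its convergent sequences, so $S$ is closed.

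An equivalent, more structural way to phrase the same argument is to write $S=\Omega\cap f^{-1}((-\infty,c])$: the set $\Omega$ is closed because it is compact, and $f^{-1}((-\infty,c])$ is closed because it is the preimage of a closed set under the continuous map $f$, so $S$ is an intersection of two closed sets and therefore closed. Either route is elementary; there is no real obstacle here. The only point requiring a small amount of care is bookkeeping with the paper's notions of limit point and closure, together with the routine fact — supplied precisely by closedness of $\Omega$ — that a convergent sequence lying in a closed set has its limit in that set.
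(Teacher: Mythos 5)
Your proposal is correct and its main argument — take a convergent sequence in the sublevel set, use compactness (hence closedness) of $\Omega$ to keep the limit in $\Omega$, and continuity of $f$ to pass the inequality $f(x_k)\le c$ to the limit — is essentially identical to the paper's proof. The alternative phrasing via $\Omega\cap f^{-1}((-\infty,c])$ is a harmless restatement of the same idea.
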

\begin{proof}
	{Let $\{x_k\}_{k \in \N} \subset \{x \in \Omega: f(x) \le c\}$ be a convergent sequence such that $\lim_{k \to \infty} x_k = x^*$. To prove $\{x \in \Omega: f(x) \le c\}$ is closed we must show that $x^* \in \{x \in \Omega: f(x) \le c\}$. Now, since $\Omega \subset \R^n$ is compact it follows that $\Omega$ is closed and thus since $\{x_k\}_{k \in \N} \subset \Omega$ it follows $x^* \in \Omega$. On the other hand, by continuity it follows that $\lim_{k \to \infty}f(x_k) = f(x^*)$. Since $f(x_k) \le c$ for all $k \in \N$ it must follow that $f(x^*) \le c$. Hence, $x^* \in \{x \in \Omega: f(x) \le c\}$ implying $\{x \in \Omega: f(x) \le c\}$ is a closed set.}
\end{proof}

\begin{lem}[Gronwall's Inequality \cite{Hirch_2004}] \label{lem: gronwall}
	Consider scalars $a,b \in \R$ and functions $u, \beta \in C^1(I,\R)$. Suppose
	\begin{align*}
	\frac{d}{dt}u(t) \le \beta(t) u(t) \text{ for all } t \in (a,b).
	\end{align*}
	Then it follows that
	\begin{align*}
	u(t) \le u(a) \exp\left(\int_a^t \beta(s) ds\right) \text{ for all } t \in [a,b].
	\end{align*}
\end{lem}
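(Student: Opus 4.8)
The plan is to use the classical integrating-factor argument. First I would introduce the auxiliary function
\begin{align*}
w(t) := u(t)\exp\left(-\int_a^t \beta(s)\,ds\right), \qquad t \in [a,b].
\end{align*}
Since $\beta \in C^1(I,\R) \subset C(I,\R)$, the fundamental theorem of calculus guarantees that $t \mapsto \int_a^t \beta(s)\,ds$ is continuously differentiable, and since $u \in C^1(I,\R)$, the product and chain rules show $w$ is differentiable on $(a,b)$ with
\begin{align*}
w'(t) &= u'(t)\exp\left(-\int_a^t \beta(s)\,ds\right) - u(t)\beta(t)\exp\left(-\int_a^t \beta(s)\,ds\right)\\
&= \exp\left(-\int_a^t \beta(s)\,ds\right)\bigl(u'(t) - \beta(t)u(t)\bigr).
\end{align*}

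Next I would invoke the hypothesis $u'(t) \le \beta(t)u(t)$ on $(a,b)$, which gives $u'(t) - \beta(t)u(t) \le 0$; as the exponential prefactor is strictly positive, we get $w'(t) \le 0$ for all $t \in (a,b)$. Together with continuity of $w$ on $[a,b]$, this forces $w$ to be non-increasing on $[a,b]$, so $w(t) \le w(a)$ for every $t \in [a,b]$. Since $w(a) = u(a)\exp(0) = u(a)$, this is exactly $u(t)\exp\left(-\int_a^t \beta(s)\,ds\right) \le u(a)$, and multiplying both sides by the positive quantity $\exp\left(\int_a^t \beta(s)\,ds\right)$ yields the claimed inequality $u(t) \le u(a)\exp\left(\int_a^t \beta(s)\,ds\right)$ for all $t \in [a,b]$.

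I do not expect a real obstacle here; this is a standard computation. The only points deserving a line of justification are that $w$ is genuinely differentiable (needs continuity of $\beta$, which follows from $\beta \in C^1$) and that ``$w' \le 0$ on $(a,b)$ plus continuity on $[a,b]$'' implies $w$ is non-increasing on the closed interval, which is an immediate consequence of the mean value theorem. As an alternative that bypasses the monotonicity statement, one can integrate $w'(s) \le 0$ directly from $a$ to $t$ via the fundamental theorem of calculus to obtain $w(t) - w(a) = \int_a^t w'(s)\,ds \le 0$, and then proceed as above.
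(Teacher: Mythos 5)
Your proof is correct and complete: the integrating-factor argument with $w(t) = u(t)\exp\bigl(-\int_a^t \beta(s)\,ds\bigr)$, the observation $w'(t) \le 0$ on $(a,b)$, and the passage to the closed interval via continuity and the mean value theorem are all sound, and no sign condition on $u$ is needed for this differential form of the inequality. Note that the paper itself offers no proof to compare against — it states the lemma with a citation to \cite{Hirch_2004} — and your argument is precisely the standard textbook proof found in such references.
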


%
%
%

\begin{thm}[Polynomial Approximation \cite{peet2009exponentially}] \label{thm:Nachbin}
	Let $E \subset \R^n$ be an open set and $f \in C^1(E, \R)$. For any compact set $K \subseteq E$ and $\eps>0$ there exists  $g \in \mcl P(\R^n, \R)$ such that
	\begin{align*}
	\sup_{x \in K}|D^\alpha f(x) - D^\alpha g(x)| < \eps \text{ for all } |\alpha|\le 1.
	\end{align*}
\end{thm}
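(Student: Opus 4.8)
\emph{Proof proposal.} The plan is to localize, replace $f$ near $K$ by a real-analytic mollification, and then truncate its power series. First I would choose $r>0$ so that the open $2r$-neighbourhood $U$ of $K$ has compact closure contained in $E$ (possible since $K$ is compact and $E$ is open), let $W$ be the open $r$-neighbourhood of $K$, and pick $\chi\in C^\infty(\R^n,\R)$ with $0\le\chi\le1$, $\chi\equiv 1$ on $W$, and $\operatorname{supp}\chi\subset U$. Then $\tilde f:=\chi f$, extended by zero outside $U$, lies in $C^1(\R^n,\R)$, has compact support, and coincides with $f$ on the open set $W\supseteq K$; since $|\alpha|\le 1$ this forces $D^\alpha\tilde f=D^\alpha f$ on $K$, so it suffices to approximate $\tilde f$ on $K$ in the $C^1$ sense.

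Next I would mollify with a Gaussian. Let $G_\delta$ denote the standard Gaussian kernel on $\R^n$ concentrating at the origin as $\delta\to0$ (with $\int_{\R^n}G_\delta=1$) and put $F_\delta:=\tilde f * G_\delta$. Differentiating under the integral, $D^\alpha F_\delta=(D^\alpha\tilde f)*G_\delta$ for $|\alpha|\le 1$, and since each $D^\alpha\tilde f$ is continuous with compact support, hence bounded and uniformly continuous, the approximate-identity property gives $(D^\alpha\tilde f)*G_\delta\to D^\alpha\tilde f$ uniformly on $\R^n$ as $\delta\to0$. So I can fix $\delta>0$ with $\sup_{x\in K}|D^\alpha F_\delta(x)-D^\alpha\tilde f(x)|<\eps/2$ for all $|\alpha|\le1$.

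The crucial observation is that $F_\delta$ is \emph{real-analytic}: because $\tilde f$ has compact support and $G_\delta$ extends to an entire function on $\mathbb{C}^n$, the map $z\mapsto\int_{\R^n}\tilde f(y)\,G_\delta(z-y)\,dy$ is holomorphic on $\mathbb{C}^n$, so $F_\delta$ is the restriction of an entire function. Consequently its Taylor series about the origin, which has real coefficients $D^\alpha F_\delta(0)/\alpha!$, converges to $F_\delta$ uniformly on every compact subset of $\mathbb{C}^n$, and by the Weierstrass convergence theorem the termwise-differentiated series converges to the corresponding derivatives uniformly on compacta. Hence the partial sums $g_N\in\mcl{P}(\R^n,\R)$ converge to $F_\delta$ in the $C^1$ norm on any closed ball containing $K$; choosing $N$ with $\sup_{x\in K}|D^\alpha g_N(x)-D^\alpha F_\delta(x)|<\eps/2$ for $|\alpha|\le1$ and taking $g:=g_N$, the triangle inequality together with $D^\alpha f=D^\alpha\tilde f$ on $K$ yields $\sup_{x\in K}|D^\alpha f(x)-D^\alpha g(x)|<\eps$ for all $|\alpha|\le1$.

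I expect the main obstacle to be precisely this last step — upgrading the $C^0$ (uniform) convergence of the power-series partial sums of $F_\delta$ to $C^1$ convergence, which is exactly where analyticity is essential. An ordinary compactly-supported mollifier produces only a $C^\infty$ function whose Taylor series may diverge, hence no polynomial at all, and the natural alternative of approximating $\nabla\tilde f$ componentwise by polynomials and integrating fails because a polynomial approximation of a gradient field need not itself be a gradient. The remaining ingredients — the smooth cutoff, the standard mollifier estimates, and the final triangle inequality — are routine. (Alternatively, one may simply invoke Nachbin's theorem on the density of polynomials in the $C^1$ topology on compact sets, of which this statement is a special case.)
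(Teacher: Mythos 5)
The paper offers no proof of Theorem~\ref{thm:Nachbin} at all: it is quoted as an external result from \cite{peet2009exponentially}, so there is no internal argument to compare against, and your closing parenthetical (``simply invoke Nachbin's theorem'') is in effect what the paper does. Your blind proof is a correct, self-contained derivation of the cited statement. The steps all check out: the smooth cutoff produces $\tilde f\in C^1(\R^n,\R)$ with compact support agreeing with $f$ (and hence with its first derivatives) on a neighbourhood of $K$; the identities $D^\alpha(\tilde f * G_\delta)=(D^\alpha\tilde f)*G_\delta$ for $\abs{\alpha}\le 1$ together with the approximate-identity property give $C^1$-uniform approximation of $\tilde f$ by $F_\delta$; holomorphy of $z\mapsto\int\tilde f(y)G_\delta(z-y)\,dy$ on $\mathbb{C}^n$ follows from the compact support of $\tilde f$, so $F_\delta$ is the restriction of an entire function; and the truncated Taylor series converges to $F_\delta$ together with its first derivatives uniformly on compacta by the Weierstrass/Cauchy-estimate argument, which is indeed the crucial point where analyticity (and not mere smoothness of a mollification) is needed — you identify this correctly. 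The coefficients $D^\alpha F_\delta(0)/\alpha!$ are real since $F_\delta$ is real-valued on $\R^n$, so $g_N\in\mcl P(\R^n,\R)$ as required. In short, what your argument buys is independence from the citation: it reproves the $C^1$ polynomial-density result (a special case of Nachbin's theorem) by the classical mollify-and-truncate route, at the cost only of routine bookkeeping that the paper avoids by citing \cite{peet2009exponentially}.
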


\begin{thm}[Putinar's Positivstellesatz \cite{Putinar_1993}] \label{thm: Psatz}
	Consider the semialgebriac set $X = \{x \in \R^n: g_i(x) \ge 0 \text{ for } i=1,...,k\}$. Further suppose $\{x  \in \R^n : g_i(x) \ge 0 \}$ is compact for some $i \in \{1,..,k\}$. If the polynomial $f: \R^n \to \R$ satisfies $f(x)>0$ for all $x \in X$, then there exists SOS polynomials $\{s_i\}_{i \in \{1,..,m\}} \subset \sum_{SOS}$ such that,
	\vspace{-0.4cm}\begin{equation*}
	f - \sum_{i=1}^m s_ig_i \in \sum_{SOS}.
	\end{equation*}
\end{thm}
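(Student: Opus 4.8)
The plan is to prove the dual form of the statement by a separation argument and then read off the representation: if $f$ does \emph{not} lie in the quadratic module generated by the $g_i$, we manufacture a point $x^\ast\in X$ with $f(x^\ast)\le 0$, contradicting $f>0$ on $X$. Write $\Sigma$ for the cone of sums of squares (denoted $\sum_{SOS}$ above) and set
\begin{equation*}
M \;:=\; \Bigl\{\,\sigma_0 + \textstyle\sum_{i=1}^{k}\sigma_i g_i \;:\; \sigma_0,\dots,\sigma_k\in\Sigma\,\Bigr\},
\end{equation*}
the quadratic module generated by $g_1,\dots,g_k$. Then $M$ is a convex cone with $1\in M$, $M+M\subseteq M$, $\Sigma\cdot M\subseteq M$, and each $g_i\in M$; and the conclusion of the theorem is exactly the assertion $f\in M$, since $f=\sigma_0+\sum_{i=1}^k\sigma_i g_i$ rewrites as $f-\sum_{i=1}^k s_ig_i=\sigma_0\in\Sigma$ with $s_i:=\sigma_i$.

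\textbf{Step 1 (the module is Archimedean).} This is where the compactness hypothesis is used. Since $K_\ell:=\{x:g_\ell(x)\ge 0\}$ is compact for some $\ell$, it lies in a Euclidean ball and $g_\ell$ is negative off a bounded set; a standard lemma on quadratic modules then upgrades this to a membership $N-\norm{x}^2\in M$ for a suitable integer $N$ (i.e.\ $M$ is Archimedean). A short consequence is that every $p\in\R[x]$ admits a constant $c_p>0$ with $c_p\pm p\in M$; in particular $1$ is an algebraic interior point of $M$, because $1\pm tp\in M$ for all sufficiently small $t>0$. Note also that $X\subseteq K_\ell$ is compact, so $f>0$ on $X$ gives $\min_X f>0$.

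\textbf{Step 2 (separation) and Step 3 (from the functional to a point).} Suppose for contradiction $f\notin M$. As $M$ is a convex cone with interior point $1$ and $f$ is not an interior point of $M$, the Hahn--Banach separation theorem (in Eidelheit's form) yields a nonzero linear functional $L:\R[x]\to\R$ with $L\ge 0$ on $M$ and $L(f)\le 0$; since $M$ is a cone and $1$ is interior, $L(1)>0$ and we normalize $L(1)=1$. Then $L(p^2)\ge 0$ for all $p$ (as $p^2\in\Sigma\subseteq M$), so $(p,q)\mapsto L(pq)$ is positive semidefinite, and $|L(p)|\le c_p$ from the Archimedean bounds. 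Quotient $\R[x]$ by the ideal $\{p:L(p^2)=0\}$ (an ideal by Cauchy--Schwarz for this form), complete to a real Hilbert space $H$, and observe that multiplication by each coordinate $x_j$ descends to a bounded, commuting, self-adjoint operator $X_j$ on $H$, with boundedness supplied exactly by the Archimedean estimate. By the spectral theorem for commuting self-adjoint operators and the cyclic vector coming from $1$, $L$ is represented as $L(p)=\int_{\R^n}p\,d\mu$ for a Borel probability measure $\mu$; the conditions $L(q^2 g_i)\ge 0$ make each operator $g_i(X_1,\dots,X_n)$ positive semidefinite, so its spectrum, hence $\operatorname{supp}\mu$, lies in $\{g_i\ge 0\}$, and intersecting over $i$ gives $\operatorname{supp}\mu\subseteq X$. (Alternatively one avoids operators by extending $M$ to a maximal proper quadratic module not containing $f$, which is an ordering whose support is a real prime ideal; Archimedicity forces the residue field to be $\R$, producing the point directly.) Consequently $L(f)=\int f\,d\mu\ge \mu(X)\min_X f=\min_X f>0$, contradicting $L(f)\le 0$. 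Hence $f\in M$, i.e.\ $f-\sum_{i=1}^k s_i g_i\in\Sigma$ for SOS multipliers $s_i$.

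The main obstacle is Step 3: converting the abstract separating functional $L$ into a genuine measure on (equivalently, a point of) $X$ is the conceptual core of the theorem, and it is precisely here that the Archimedean property is indispensable --- without it, a positive functional need not be a moment functional of any measure, which is the classical gap between positivity and solvability of the moment problem. By contrast, Step 1 is a routine application of the quadratic-module lemma and the separation in Step 2 is a textbook argument.
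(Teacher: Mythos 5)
The paper does not actually prove Theorem~\ref{thm: Psatz}: it is quoted, with citation, as a known result from \cite{Putinar_1993}, so there is no internal proof to compare yours against. Judged on its own, your sketch is the standard proof of Putinar's Positivstellensatz --- essentially Putinar's original functional-analytic argument --- and its skeleton is correct: recast the conclusion as $f\in M$ for the quadratic module $M$; use the Archimedean property to make $1$ an algebraic interior point of $M$; separate $f$ from $M$ by an Eidelheit functional $L$ with $L\ge 0$ on $M$, $L(f)\le 0$, normalized by $L(1)=1$; run the GNS construction, with boundedness of the coordinate multiplication operators supplied exactly by the Archimedean bounds; and use the joint spectral measure of the resulting commuting self-adjoint tuple, whose support lies in each $\{g_i\ge 0\}$ because $L(q^2 g_i)\ge 0$, to represent $L$ by a probability measure supported in $X$, contradicting $L(f)\le 0$ via $\min_X f>0$. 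The alternative algebraic route you mention (maximal proper quadratic modules, residue field $\R$) is likewise a legitimate known variant.

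The one place where your weighting is off is Step 1. Deducing $N-\norm{x}^2\in M$ from compactness of a single set $\{g_\ell\ge 0\}$ with $g_\ell\in M$ is not a routine manipulation of quadratic modules but a genuine theorem: it is usually obtained by applying Schm\"udgen's Positivstellensatz to the single constraint $g_\ell$ (for one generator the preordering and the quadratic module coincide), or by Putinar's own lemma to this effect. Calling it ``a standard lemma'' and citing it is fine for a sketch, but calling it routine understates it --- if you were asked to make the argument self-contained, that step would be a substantial piece of work in its own right, comparable in difficulty to the separation-plus-spectral-theorem core you correctly identify as the conceptual heart of the proof.
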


%

\section{Appendix B: sublevel set approximation} \label{sec: appendix 2}
Recall from Section~\ref{subsec: set metric notation} that
\begin{align} \label{eqn: volume metric}
D_V(A,B):=\mu( (A/B) \cup (B/A) ),
\end{align} where $\mu(A)$ is the Lebesgue measure of $A \subset \R^n$.

 The sublevel approximation results presented in this appendix are required in the proof of Theorem~\ref{thm: existence of sos LF}. 
\begin{defn} \label{def:metric}
	$D: X \times X \to \R$ is a \textit{metric} if the following is satisfied for all $x,y \in X$,
	\vspace{-0.4cm}
	\setlength{\columnsep}{-0.75in}
	\begin{multicols}{2}
		\begin{itemize}
			\item $D(x,y) \ge 0$,
			\item $D(x,y)=0$ iff $x=y$,
			\item $D(x,y)=D(y,x)$,
			\item $D(x,z) \le D(x,y) + D(y,z)$.
		\end{itemize}
	\end{multicols}
\end{defn}
\begin{lem}[\cite{jones2019using}] \label{lem: Dv is metric}
	Consider the quotient space,
	{	\[
		X:= \mcl B \pmod {\{X \subset \R^n : X \ne \emptyset, \mu(X) =0 \}},
		\] } recalling $\mcl B:= \{B \subset \R^n: \mu(B)<\infty\}$ is the set of all bounded sets. Then $D_V: X \times X \to \R$, defined in Eq.~\eqref{eqn: volume metric}, is a metric.
\end{lem}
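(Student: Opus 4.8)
The plan is to recognize $D_V$ as the pull-back of the $L^1(\R^n,\R)$ metric along the map sending a bounded set to its indicator function, so that the four axioms of Defn.~\ref{def:metric} descend from the corresponding properties of $L^1$. The starting observation is the \emph{pointwise} identity $\mathds{1}_{(A/B)\cup(B/A)}(x) = |\mathds{1}_A(x)-\mathds{1}_B(x)|$ for every $x \in \R^n$ (check the three cases $x\in A\cap B$, $x\in (A/B)\cup(B/A)$, $x\notin A\cup B$), which yields
\begin{align*}
D_V(A,B) &= \mu\bigl((A/B)\cup(B/A)\bigr) = \int_{\R^n}\bigl|\mathds{1}_A(x)-\mathds{1}_B(x)\bigr|\,dx \\
&= \|\mathds{1}_A - \mathds{1}_B\|_{L^1(\R^n,\R)}.
\end{align*}
Since $A,B\in\mcl B$ are bounded, $\mathds{1}_A,\mathds{1}_B\in L^1(\R^n,\R)$, so the right-hand side is finite; hence $D_V$ is a well-defined map into $[0,\infty)$.

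Next I would verify that $D_V$ is constant on equivalence classes, i.e.\ descends to $X$. Writing $A\sim A'$ for $\mu\bigl((A/A')\cup(A'/A)\bigr)=0$ (the equivalence being quotiented out), the condition $A\sim A'$ forces $\mathds{1}_A=\mathds{1}_{A'}$ almost everywhere, and likewise $B\sim B'$ forces $\mathds{1}_B=\mathds{1}_{B'}$ a.e.; therefore $\|\mathds{1}_A-\mathds{1}_B\|_{L^1(\R^n,\R)} = \|\mathds{1}_{A'}-\mathds{1}_{B'}\|_{L^1(\R^n,\R)}$, so $D_V$ is unambiguous on $X\times X$. Equivalently, $[A]\mapsto\mathds{1}_A$ is a well-defined injection of $X$ into $L^1(\R^n,\R)$: injectivity is the statement that $\|\mathds{1}_A-\mathds{1}_B\|_{L^1(\R^n,\R)}=0$ implies $\mathds{1}_A=\mathds{1}_B$ a.e., i.e.\ $\mu\bigl((A/B)\cup(B/A)\bigr)=0$, i.e.\ $[A]=[B]$.

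With this identification in hand, the metric axioms are immediate. Non-negativity holds because $D_V(A,B)$ is an $L^1$ norm; symmetry holds because $|\mathds{1}_A-\mathds{1}_B|=|\mathds{1}_B-\mathds{1}_A|$ pointwise; the identity-of-indiscernibles axiom $D_V([A],[B])=0 \iff [A]=[B]$ is precisely the injectivity established above; and the triangle inequality $D_V([A],[C])\le D_V([A],[B])+D_V([B],[C])$ follows from the $L^1$ triangle inequality applied to $\mathds{1}_A-\mathds{1}_C = (\mathds{1}_A-\mathds{1}_B)+(\mathds{1}_B-\mathds{1}_C)$ (or, directly, from the set inclusion $(A/C)\cup(C/A)\subseteq \bigl((A/B)\cup(B/A)\bigr)\cup\bigl((B/C)\cup(C/B)\bigr)$ together with monotonicity and finite subadditivity of $\mu$).

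I do not expect a substantive obstacle here; the work is essentially bookkeeping. The two points that require care are (i) checking that $D_V$ is class-invariant — which is exactly why the quotient by null sets is taken in the first place, so it must be confirmed rather than assumed — and (ii) keeping the meaning of $\mu$ (and the measurability of $(A/B)\cup(B/A)$) consistent with the integral-of-indicator definition of Section~\ref{subsec: set metric notation}; both are dispatched by the exact identity $\mathds{1}_{(A/B)\cup(B/A)}=|\mathds{1}_A-\mathds{1}_B|$ and completeness of Lebesgue measure.
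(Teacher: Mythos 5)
Your proof is correct. Note that the paper itself gives no argument for this lemma---it is quoted from~\cite{jones2019using}---so there is no in-text proof to compare against; your route, rewriting $D_V(A,B)=\mu((A/B)\cup(B/A))=\|\mathds{1}_A-\mathds{1}_B\|_{L^1(\R^n,\R)}$ via the pointwise identity $\mathds{1}_{(A/B)\cup(B/A)}=|\mathds{1}_A-\mathds{1}_B|$ and pulling back the $L^1$ norm axioms (with the class-invariance/injectivity check giving identity of indiscernibles on the quotient), is the standard argument for this symmetric-difference (Fr\'echet--Nikodym) metric, and your parenthetical set-inclusion $(A/C)\cup(C/A)\subseteq(A/B)\cup(B/A)\cup(B/C)\cup(C/B)$ is the usual purely set-theoretic alternative for the triangle inequality. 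The only caveats are cosmetic and lie with the paper rather than with you: $\mcl B$ consists of finite-measure (not literally bounded) sets, implicit measurability of the sets involved must be assumed for $\mu$ and the integrals to mean what they should, and the paper's $\pmod{}$ notation must be read, as you do, as identifying sets whose symmetric difference is null.
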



\begin{lem}[\cite{jones2019using}] \label{lem: D_V is related to vol}
	If $A,B \in \mcl B$ and  $B \subseteq A$ then
	\begin{align*}
	D_V(A,B)& =\mu(A/B)= \mu(A)- \mu (B).
	\end{align*}
\end{lem}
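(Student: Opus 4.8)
The plan is to reduce both claimed equalities to elementary properties of the Lebesgue measure, using only the hypothesis $B \subseteq A$ and the finiteness of $\mu(A)$ (which follows from $A \in \mcl B$). The argument has two short steps.

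First I would observe that since $B \subseteq A$ we have $B/A = \{x \in B : x \notin A\} = \emptyset$. Therefore $(A/B) \cup (B/A) = A/B$, and by the definition of the volume metric, $D_V(A,B) = \mu\bl (A/B) \cup (B/A) \br = \mu(A/B)$. This gives the first equality.

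Second, I would write $A$ as the disjoint union $A = (A/B) \cup B$, which again holds precisely because $B \subseteq A$: every point of $A$ either lies in $B$ or lies in $A/B$, and these two sets are disjoint. By finite additivity of the Lebesgue measure on disjoint (measurable) sets, $\mu(A) = \mu(A/B) + \mu(B)$. Since $A \in \mcl B$ gives $\mu(A) < \infty$, and $B \subseteq A$ forces $\mu(B) \le \mu(A) < \infty$, the quantity $\mu(B)$ may be subtracted from both sides, yielding $\mu(A/B) = \mu(A) - \mu(B)$. Combining this with the first step proves $D_V(A,B) = \mu(A/B) = \mu(A) - \mu(B)$.

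I do not anticipate a genuine obstacle here; the only point requiring a little care is the implicit measurability of the sets involved (so that additivity applies) and the finiteness needed to justify the subtraction, both of which are guaranteed by working within $\mcl B$. If one wanted to be fully rigorous one could also note that the sets $A$, $B$, $A/B$ are assumed Lebesgue measurable throughout the paper (they arise as sublevel sets of continuous functions intersected with compact sets, cf.\ Lem.~\ref{lem: sublevel set is closed}), so no extra hypothesis is needed.
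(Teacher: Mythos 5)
Your proof is correct and is the standard (essentially the only) argument: $B \subseteq A$ kills the term $B/A$, and disjoint additivity of $\mu$ on $A = (A/B) \cup B$ together with finiteness of $\mu(A)$ gives the subtraction; the paper itself only cites this lemma from an external reference, and your argument is precisely the expected one, including the correct remark that measurability and finiteness are what license the additivity and the subtraction step.
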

\begin{prop}[\cite{jones2020polynomial}] \label{prop: close in L1 implies close in V norm}
	Consider a set $\Lambda \in {\mcl B}$, a function $V \in L^1(\Lambda, \R)$, and a family of functions $\{J_d \in L^1(\Lambda, \R): d \in \N\}$ that satisfies the following properties:
	\begin{enumerate}
		\item For any $ d \in \N$ we have $J_d(x) \le V(x)$ for all $x \in \Lambda$.
		\item $\lim_{d \to \infty} ||V -J_d||_{L^1(\Lambda, \R)} =0$.
	\end{enumerate}
	Then for all $\gamma \in \R$
	\begin{align} \label{sublevel sets close}
	\lim_{d \to \infty}	D_V \bigg(\{x \in \Lambda : V(x) \le \gamma\}, \{x \in \Lambda : J_d(x) \le \gamma\} \bigg) =0.
	\end{align}
\end{prop}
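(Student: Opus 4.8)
The plan is to exploit the one-sided bound $J_d \le V$ to collapse the symmetric difference into a single sublevel-set ``gap'', and then control that gap by splitting it into one piece handled by the $L^1$ convergence (via Markov's inequality) and one piece handled by continuity of Lebesgue measure.

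First I would observe that property~(1) forces $\{x \in \Lambda : V(x) \le \gamma\} \subseteq \{x \in \Lambda : J_d(x) \le \gamma\}$, so both sets are measurable subsets of $\Lambda$ and hence lie in $\mcl B$. Then Lemma~\ref{lem: D_V is related to vol} (applied with $A = \{x \in \Lambda : J_d(x) \le \gamma\}$ and $B = \{x \in \Lambda : V(x) \le \gamma\}$) gives
\begin{align*}
D_V\big(\{x \in \Lambda : V(x) \le \gamma\},\, \{x \in \Lambda : J_d(x) \le \gamma\}\big) = \mu\big(\{x \in \Lambda : J_d(x) \le \gamma < V(x)\}\big),
\end{align*}
so it suffices to show $\mu(\{x \in \Lambda : J_d(x) \le \gamma < V(x)\}) \to 0$ as $d \to \infty$.

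Next, for an auxiliary parameter $\epsilon > 0$ I would use the inclusion
\begin{align*}
\{x \in \Lambda : J_d(x) \le \gamma < V(x)\} \subseteq \{x \in \Lambda : \gamma < V(x) \le \gamma + \epsilon\} \cup \{x \in \Lambda : V(x) - J_d(x) > \epsilon\},
\end{align*}
which holds because $J_d(x) \le \gamma$ together with $V(x) > \gamma + \epsilon$ implies $V(x) - J_d(x) > \epsilon$. Since $V - J_d \ge 0$ by property~(1), Markov's inequality bounds the measure of the second set by $\epsilon^{-1}||V - J_d||_{L^1(\Lambda,\R)}$, which vanishes as $d \to \infty$ by property~(2). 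For the first set, the sets $\{x \in \Lambda : \gamma < V(x) \le \gamma + \epsilon\}$ decrease to $\emptyset$ as $\epsilon \downarrow 0$ (every $x$ with $V(x) > \gamma$ eventually leaves the family), and $\mu(\{x \in \Lambda : \gamma < V(x) \le \gamma + 1\}) \le \mu(\Lambda) < \infty$ since $\Lambda \in \mcl B$, so continuity of measure from above lets me choose $\epsilon > 0$ making $\mu(\{x \in \Lambda : \gamma < V(x) \le \gamma + \epsilon\})$ as small as I wish.

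Putting it together: given $\eta > 0$, first fix $\epsilon > 0$ with $\mu(\{x \in \Lambda : \gamma < V(x) \le \gamma + \epsilon\}) < \eta/2$, then apply property~(2) to obtain $N$ with $\epsilon^{-1}||V - J_d||_{L^1(\Lambda,\R)} < \eta/2$ for all $d \ge N$; adding the two estimates gives $D_V(\cdots) < \eta$ for $d \ge N$, and since $\eta$ was arbitrary the limit is $0$. I expect the main obstacle to be precisely this ``gap near the level $\gamma$'': it is essential that the complement $\{J_d \le \gamma\} \setminus \{V \le \gamma\}$ involves the \emph{strict} inequality $V(x) > \gamma$, so that a level set $\{x : V(x) = \gamma\}$ of positive measure is harmless (it lies in both sublevel sets), and it is essential that $\Lambda \in \mcl B$ so that continuity from above is available; everything else is a routine Markov-inequality estimate.
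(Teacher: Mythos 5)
Your proposal is correct. Note that the paper does not prove Proposition~\ref{prop: close in L1 implies close in V norm} itself --- it is quoted from an external reference --- so there is no in-paper argument to compare against; your route is the natural and complete one: the one-sided bound $J_d \le V$ gives $\{x\in\Lambda: V(x)\le\gamma\}\subseteq\{x\in\Lambda: J_d(x)\le\gamma\}$, Lemma~\ref{lem: D_V is related to vol} collapses $D_V$ to $\mu(\{x\in\Lambda: J_d(x)\le\gamma<V(x)\})$, and the split into $\{x\in\Lambda:\gamma<V(x)\le\gamma+\eps\}$ (controlled by continuity of measure from above, which is where $\mu(\Lambda)<\infty$ is genuinely needed) and $\{x\in\Lambda: V(x)-J_d(x)>\eps\}$ (controlled by Markov's inequality and hypothesis~(2)) closes the argument. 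You also correctly flag the two essential points: the symmetric difference only involves the strict inequality $V(x)>\gamma$, so a fat level set $\{x: V(x)=\gamma\}$ causes no harm, and the one-sidedness of the approximation is what makes the symmetric difference a single set difference in the first place.
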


\bibliographystyle{unsrt}
\bibliography{bib_Approx_sublevel_set}

\begin{thebibliography}{10}

\bibitem{cuomo1993synchronization}
Kevin~M Cuomo, Alan~V Oppenheim, and Steven~H Strogatz.
\newblock Synchronization of lorenz-based chaotic circuits with applications to
  communications.
\newblock {\em IEEE Transactions on circuits and systems II: Analog and digital
  signal processing}, 40(10):626--633, 1993.

\bibitem{zhao2018observer}
Younan Zhao, Wei Zhang, Housheng Su, and Junqi Yang.
\newblock Observer-based synchronization of chaotic systems satisfying
  incremental quadratic constraints and its application in secure
  communication.
\newblock {\em IEEE Transactions on Systems, Man, and Cybernetics: Systems},
  50(12):5221--5232, 2018.

\bibitem{lakshmi2020finding}
Mayur~V Lakshmi, Giovanni Fantuzzi, Jes{\'u}s~D Fern{\'a}ndez-Caballero,
  Yongyun Hwang, and Sergei~I Chernyshenko.
\newblock Finding extremal periodic orbits with polynomial optimization, with
  application to a nine-mode model of shear flow.
\newblock {\em SIAM Journal on Applied Dynamical Systems}, 19(2):763--787,
  2020.

\bibitem{gao2018ultimate}
Wei Gao, Li~Yan, Mohammadhossein Saeedi, and Hassan~Saberi Nik.
\newblock Ultimate bound estimation set and chaos synchronization for a
  financial risk system.
\newblock {\em Mathematics and Computers in Simulation}, 154:19--33, 2018.

\bibitem{Lee_2016}
E~Lee.
\newblock Fundemental limits of cyber physical systems modeling.
\newblock {\em ACM Trans Cyber Phys}, 2016.

\bibitem{lin1996smooth}
Yuandan Lin, Eduardo~D Sontag, and Yuan Wang.
\newblock A smooth converse lyapunov theorem for robust stability.
\newblock {\em SIAM Journal on Control and Optimization}, 34(1):124--160, 1996.

\bibitem{Li_2004}
D~Li, J~Lu, X~Wu, and G~Chen.
\newblock Estimating the bounds for the lorenz family of chaotic systems.
\newblock {\em Chaos, Solitons and Fractals}, 23:529--534, 2005.

\bibitem{Yu_2005}
P~Yu and X~Liao.
\newblock Globally attractive and positive invariant set of the lorenz system.
\newblock {\em Internation Journal of Bifurcation and Chaos}, 16:757--764,
  2005.

\bibitem{goluskin2020bounding}
David Goluskin.
\newblock Bounding extrema over global attractors using polynomial
  optimisation.
\newblock {\em Nonlinearity}, 33(9):4878, 2020.

\bibitem{jones2018using}
Morgan Jones and Matthew~M Peet.
\newblock Using sos for optimal semialgebraic representation of sets: Finding
  minimal representations of limit cycles, chaotic attractors and unions.
\newblock {\em arXiv preprint arXiv:1809.10308}, 2018.

\bibitem{schlosser2020converging}
Corbinian Schlosser and Milan Korda.
\newblock Converging outer approximations to global attractors using
  semidefinite programming.
\newblock {\em arXiv preprint arXiv:2005.03346}, 2020.

\bibitem{wang2012polynomial}
Ta-Chung Wang, Sanjay Lall, and Matthew West.
\newblock Polynomial level-set method for attractor estimation.
\newblock {\em Journal of the Franklin Institute}, 349(9):2783--2798, 2012.

\bibitem{zheng2018computing}
Xiuliang Zheng, Zhikun She, Junjie Lu, and Meilun Li.
\newblock Computing multiple {L}yapunov-like functions for inner estimates of
  domains of attraction of switched hybrid systems.
\newblock {\em International Journal of Robust and Nonlinear Control}, 2018.

\bibitem{anderson2015advances}
James Anderson and Antonis Papachristodoulou.
\newblock Advances in computational {L}yapunov analysis using
  {S}um-of-{S}quares programming.
\newblock {\em Discrete \& Continuous Dynamical Systems-B}, 20(8):2361, 2015.

\bibitem{cunis2020sum}
Torbj{\o}rn Cunis, Jean-Philippe Condomines, and Laurent Burlion.
\newblock {S}um-of-{S}quares flight control synthesis for deep-stall recovery.
\newblock {\em Journal of Guidance, Control, and Dynamics}, 43(8):1498--1511,
  2020.

\bibitem{valmorbida2017region}
Giorgio Valmorbida and James Anderson.
\newblock Region of attraction estimation using invariant sets and rational
  {L}yapunov functions.
\newblock {\em Automatica}, 75:37--45, 2017.

\bibitem{awrejcewicz2021estimating}
Jan Awrejcewicz, Dmytro Bilichenko, Akram~Khalil Cheib, Nataliya Losyeva, and
  Volodymyr Puzyrov.
\newblock Estimating the region of attraction based on a polynomial {L}yapunov
  function.
\newblock {\em Applied Mathematical Modelling}, 90:1143--1152, 2021.

\bibitem{jones2020Arbitrarily}
Morgan Jones and Matthew~M. Peet.
\newblock Converse {L}yapunov functions and converging inner approximations to
  maximal regions of attraction of nonlinear systems.
\newblock {\em arXiv preprint arXiv:2103.12825}, 2021.

\bibitem{20.500.11850/461553}
Eva Ahbe.
\newblock {\em Region of Attraction Analysis of Uncertain Equilibrium Points
  and Limit Cycles - with Application to Airborne Wind Energy Systems}.
\newblock PhD thesis, ETH Zurich, Zurich, 2020.

\bibitem{lasserre2019volume}
Jean~B Lasserre.
\newblock Volume of sublevel sets of homogeneous polynomials.
\newblock {\em SIAM Journal on Applied Algebra and Geometry}, 3(2):372--389,
  2019.

\bibitem{Dabbene_2017}
F~Dabbene, D~Henrion, and C~Lagoa.
\newblock Simple approximations of semialgebraic sets and their application to
  control.
\newblock {\em Automatica, Elsevier}, 78:110--118, 2017.

\bibitem{Magnani_2005}
A~Magnani, S~Lall, and S~Boyd.
\newblock Tractable fitting with convex polynomials via sum of squares.
\newblock {\em CDC}, 2005.

\bibitem{Ahmadi_2017}
A~Ahmadi, G~Hall, A~Makadia, and V~Sindhwani.
\newblock Geometry of 3d environments and sum of squares polynomials.
\newblock {\em arXiv}, 2017.

\bibitem{jones2019using}
Morgan Jones and Matthew~M Peet.
\newblock Using sos and sublevel set volume minimization for estimation of
  forward reachable sets.
\newblock {\em arXiv preprint arXiv:1901.11174}, 2019.

\bibitem{lofberg2004yalmip}
Johan Lofberg.
\newblock Yalmip: A toolbox for modeling and optimization in matlab.
\newblock In {\em 2004 IEEE international conference on robotics and automation
  (IEEE Cat. No. 04CH37508)}, pages 284--289. IEEE, 2004.

\bibitem{teel2000smooth}
Andrew~R Teel and Laurent Praly.
\newblock A smooth lyapunov function from a class-kl estimate involving two
  positive semidefinite functions.
\newblock {\em ESAIM: Control, Optimisation and Calculus of Variations},
  5:313--367, 2000.

\bibitem{peet2010converse}
Matthew~M Peet and Antonis Papachristodoulou.
\newblock A converse sum-of-squares lyapunov result: An existence proof based
  on the picard iteration.
\newblock In {\em 49th IEEE Conference on Decision and Control (CDC)}, pages
  5949--5954. IEEE, 2010.

\bibitem{ahmadi2018globally}
Amir~Ali Ahmadi and Bachir El~Khadir.
\newblock A globally asymptotically stable polynomial vector field with
  rational coefficients and no local polynomial {L}yapunov function.
\newblock {\em Systems \& Control Letters}, 121:50--53, 2018.

\bibitem{Khalil_1996}
H~Khalil.
\newblock {\em Nonlinear Systems}.
\newblock 1996.

\bibitem{Hirch_2004}
M~Hirsch, S~Smale, and R~Devaney.
\newblock {\em Differential Equations, Dynamical Systems and An Introduction To
  Choas}.
\newblock 2004.

\bibitem{bhatia2006dynamical}
Nam~P Bhatia and George~P Szeg{\"o}.
\newblock {\em Dynamical systems: stability theory and applications},
  volume~35.
\newblock Springer, 2006.

\bibitem{prajna2002introducing}
Stephen Prajna, Antonis Papachristodoulou, and Pablo~A Parrilo.
\newblock Introducing sostools: A general purpose sum of squares programming
  solver.
\newblock In {\em Proceedings of the 41st IEEE Conference on Decision and
  Control, 2002.}, volume~1, pages 741--746. IEEE, 2002.

\bibitem{sdpt3_2003}
R~Tutuncu, K~Toh, and M~Todd.
\newblock Solving semidefinite-quadratic-linear programs using sdpt3.
\newblock {\em Springer-Verlag}, 2002.

\bibitem{peet2009exponentially}
Matthew~M Peet.
\newblock Exponentially stable nonlinear systems have polynomial lyapunov
  functions on bounded regions.
\newblock {\em IEEE Transactions on Automatic Control}, 54(5):979--987, 2009.

\bibitem{Putinar_1993}
M~Putinar.
\newblock Positive polynomials on compact semialgebriac sets.
\newblock {\em Math J}, 1993.

\bibitem{jones2020polynomial}
Morgan Jones and Matthew~M Peet.
\newblock Polynomial approximation of value functions and nonlinear controller
  design with performance bounds.
\newblock {\em arXiv preprint arXiv:2010.06828}, 2020.

\end{thebibliography}

\end{document}